\documentclass[11pt]{amsart}
\usepackage{amsmath,amssymb,mathrsfs, xcolor}
\usepackage{parskip} 
\usepackage{mathtools}
\usepackage{hyperref}
\hypersetup{linkbordercolor = {white}, citebordercolor = {white},}
\usepackage[alphabetic]{amsrefs}

\usepackage[alphabetic]{amsrefs}
\numberwithin{equation}{section}

\theoremstyle{definition}
\newtheorem{thm}[equation]{Theorem} 
\theoremstyle{definition}
\newtheorem{definition}[equation]{Definition}
\newtheorem{prop}[equation]{Proposition} 
\newtheorem{cor}[equation]{Corollary}

\newtheorem{lemma}[equation]{Lemma}
\newtheorem{remark}[equation]{Remark}

\newcommand{\ovl}[1]{\overline{#1}}
\newcommand{\udl}[1]{\underline{#1}}
\newcommand{\pair}[1]{\langle #1\rangle}
\newcommand{\set}[1]{\left\{#1\right\}}
\newcommand{\ppair}[1]{\left\langle #1\right\rangle}
\newcommand{\pr}[1]{\left(#1\right)}
\newcommand{\abs}[1]{\left|#1\right|}
\newcommand{\bb}[1]{\mathbb{#1}} \newcommand{\td}[1]{\widetilde{#1}}

\newcommand{\tr}{\text{tr}}
\newcommand{\D}{\Delta}
\newcommand{\Hess}{\text{Hess}}
\newcommand{\n}{\nabla}
\newcommand{\sbst}{\subseteq}

\newcommand{\bd}{\partial}

\title{Almost isoclinic Lagrangian submanifolds
}

\author{Tang-Kai Lee}
\author{Mu-Tao Wang}

\address{Department of Mathematics, Columbia University, New York, NY 10027, USA}
\email{leetk@math.columbia.edu and mtwang@math.columbia.edu}

\date{\today}

\begin{document}
\begin{abstract}
We introduce the almost isoclinic region in the oriented Lagrangian Grassmannian ${\rm Lag}^+(n)$ of $\mathbb{C}^n$, an intrinsic higher-dimensional analog of a natural convex region in ${\rm Lag}^+(2) \simeq \mathbb S^1\times \mathbb S^2$. 
A Lagrangian submanifold is called almost isoclinic if its Gauss map takes values in this region, extending the graphical condition that the characteristic angles of the tangent plane remain uniformly close. 
We construct a canonical positive function $\Lambda$ on this region and prove that $\log \Lambda $ is concave with respect to the invariant Grassmannian metric. 
This property yields subharmonicity and monotonicity formulas for minimal Lagrangians and Lagrangian mean curvature flow. 
As applications, we prove rigidity and Bernstein-type results, including that a complete connected almost isoclinic minimal Lagrangian with a positive lower bound for $\Lambda$ must be a Lagrangian $n$-plane.
\end{abstract}

\maketitle

\section{\bf Introduction}

Lagrangian submanifolds in $\mathbb{C}^n$ sit at the intersection of symplectic geometry, calibrated geometry, and geometric analysis. Their tangent spaces define a Gauss map into the Lagrangian Grassmannian, and geometric equations for such submanifolds imposed conditions on this map. 
In particular, special Lagrangian geometry and Lagrangian mean curvature flow motivate the search for distinguished domains in the Lagrangian Grassmannian that encode useful geometric and analytic structure. 
In dimension two, the picture is especially concrete: the oriented Lagrangian Grassmannian is isomorphic to $\mathbb S^1\times \mathbb S^2$ and within this model a natural convex region provides the motivation  for the intrinsic higher-dimensional analog sought here.

We explain the terminology \textit{almost isoclinic} in the graphical setting first. 
When $n=2$, if a Lagrangian surface in $\mathbb{C}^2$ is given locally as the graph of $\nabla u$, and if the eigenvalues of $\nabla^2 u$ are $\tan\theta_1$ and $\tan\theta_2,
\,-\frac{\pi}{2}<\theta_j<\frac{\pi}{2}, \,j=1,2$, then the almost isoclinic condition becomes 
\[ |\theta_1-\theta_2|<\frac{\pi}{2}. \] 
More generally, for a Lagrangian graph in $\mathbb{C}^n$ with Hessian eigenvalues $\tan\theta_1,\dots,\tan\theta_n,\, -\frac{\pi}{2}<\theta_j<\frac{\pi}{2}, j=1,\cdots n$, the inequalities 
\begin{equation} \label{eq:AI_angle}
|\theta_j-\theta_k|<\frac{\pi}{2}
\qquad (1\le j<k\le n) 
\end{equation} 
express that the characteristic angles of the tangent plane remain uniformly close to one another. 
This explains the term almost isoclinic. 
The above formulation is inherently tied to the graphical setting: it depends on a choice of charts and excludes natural limiting configurations in which some of the angles $\theta_j$ reach the boundary values $\pm \pi/2$. 
A basic objective of this paper is therefore to replace the graphical angle inequalities by an invariant condition formulated directly on the oriented Lagrangian Grassmannian. 

Motivated by the two-dimensional model, we define a distinguished domain in the oriented Lagrangian Grassmannian of $\mathbb{C}^n$ for arbitrary $n$, which we call the \textit{almost isoclinic region}, denoted by $\mathcal{AI}(n).$ 
This region is intrinsically characterized in terms of the invariant geometry of the Lagrangian Grassmannian. 
We say that a Lagrangian submanifold is almost isoclinic if its Gauss map takes values entirely in the almost isoclinic region. 
In this formulation, the condition is coordinate-free and global, yet flexible enough to extend far beyond the class of graphical Lagrangians.  
We provide examples of almost isoclinic Lagrangian submanifolds that show genuinely non-graphical behavior; see Figure~\ref{fig:spiral} in Section ~\ref{sym-exam} .

A central feature of the almost isoclinic region is the existence of a canonically defined function~$\Lambda$. 
For a Lagrangian graph in $\mathbb{C}^n$, the pull-back of $\Lambda $ via the Gauss map is given by 
\begin{equation}
\prod_{j<k} \cos(\theta_j-\theta_k).
\end{equation}
The definition of $\Lambda$ can be extended beyond the graph chart and remains manifestly positive on the almost isoclinic region.

Our main structural result is the log-concavity property of $\Lambda$ on this region with respect to the invariant Grassmannian metric.

\begin{thm}
\label{thm:main-thm-log-concavity}
    The function
    $\Lambda$ is log-concave on the almost isoclinic region $\mathcal{AI}(n)$ with respect to the Grassmannian metric on ${\rm Lag}^+(n)$.
    Equivalently, 
    $\Hess\pr{\log \Lambda(S)}(A, A)
    \le 0$
    for every $S\in \mathcal{AI}(n)$ and every tangent vector $A\in T_S{\rm Lag}^+(n)$.
\end{thm}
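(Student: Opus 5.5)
The plan is to reduce to a diagonal base point by symmetry, compute the second variation of $\log\Lambda$ along Grassmannian geodesics in terms of the eigenvalue angles, and then check the sign term by term.

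\textbf{Setup and reduction.} Write ${\rm Lag}^+(n)=U(n)/SO(n)$, with $S=U\cdot\mathbb R^n$, and record $S$ through the symmetric unitary matrix $W=UU^T$. The eigenvalues of $W$ are $e^{2i\theta_j}$. By the definition of $\Lambda$ (which extends $\prod_{j<k}\cos(\theta_j-\theta_k)$), $\Lambda$ depends only on the relative angles $\theta_j-\theta_k$, where the $\theta_j$ are the characteristic angles of $S$. It is therefore invariant under two families of isometries of the Grassmannian metric: the $SO(n)$ action $U\mapsto KU$ and the central phase rotation $U\mapsto e^{i\phi}U$. Using $U=K_1\,\mathrm{diag}(e^{i\theta_j})\,K_2$ and these invariances, it suffices to prove the Hessian inequality at a diagonal point $S=D\cdot\mathbb R^n$ with $D=\mathrm{diag}(e^{i\theta_1},\dots,e^{i\theta_n})$. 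On $\mathcal{AI}(n)$ we may normalize $\theta_1\ge\cdots\ge\theta_n$ with $\theta_1-\theta_n<\pi/2$. The geodesics through $S$ are $\gamma(t)=D\exp(ita)\cdot\mathbb R^n$ with $a$ real symmetric, and $|A|^2$ is a multiple of $|a|^2$. So I must show
\[
\frac{d^2}{dt^2}\Big|_{t=0}\log\Lambda(\gamma(t))\le 0 .
\]

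\textbf{Eigenvalue perturbation.} Set $W(t)=De^{2ita}D$. I will apply first- and second-order perturbation theory to its eigenvalues $e^{2i\theta_j(t)}$. The expected result is
\[
\theta_j'(0)=a_{jj},\qquad \theta_j''(0)=\sum_{k\ne j} c\,a_{jk}^2\cot(\theta_j-\theta_k),
\]
with a universal positive constant $c$; I expect $c=2$ after bookkeeping, and I will pin it down by checking $n=2$ explicitly. When eigenvalues coincide, these formulas are problematic, since $\cot(\theta_j-\theta_k)$ is singular at $\theta_j=\theta_k$. I will handle this by first working at points with distinct angles and then passing to the limit. This is legitimate because $\log\Lambda$ is a smooth symmetric function of the angles, the coincident terms contribute with bounded coefficients ($\tan x\cot x\to1$), and the points with distinct angles are dense.

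\textbf{Sign computation (the main step).} Write $f=\sum_{j<k}\log\cos(\theta_j-\theta_k)$. Then
\[
f''=-\sum_{j<k}\sec^2(\theta_j-\theta_k)\,(a_{jj}-a_{kk})^2-\sum_{j<k}\tan(\theta_j-\theta_k)\,(\theta_j''-\theta_k'').
\]
The first sum is manifestly $\le0$. In the second sum, I substitute the formula for $\theta''$ and collect the coefficient of each $a_{jl}^2$ with $j\ne l$. It is, up to the factor $c$,
\[
-\cot(\theta_j-\theta_l)\sum_{k}\big[\tan(\theta_j-\theta_k)-\tan(\theta_l-\theta_k)\big].
\]
The terms $k=j$ and $k=l$ together give exactly $-2$. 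For $k\ne j,l$, the two arguments $\theta_j-\theta_k$ and $\theta_l-\theta_k$ lie in $(-\pi/2,\pi/2)$ by the almost isoclinic condition, and their difference is $\theta_j-\theta_l$. Since $\tan$ is increasing there, each bracket has the same sign as $\theta_j-\theta_l$, which is also the sign of $\cot(\theta_j-\theta_l)$. Hence every coefficient is $\le -2<0$, and $f''\le0$.

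The genuine obstacle is getting the second-order perturbation formula, with its constant and sign, exactly right, together with the degenerate-eigenvalue limit. The monotonicity-of-$\tan$ argument depends on the almost isoclinic hypothesis only through the requirement that all differences $\theta_j-\theta_k$ lie in $(-\pi/2,\pi/2)$. Finally, I would check that this intrinsic description of $\mathcal{AI}(n)$ matches the paper's definition beyond graph charts. Since the computation is done at a diagonal representative, which always exists, no graphicality is actually used.
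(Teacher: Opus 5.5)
Your proposal is correct, and it takes a genuinely different route from the paper.

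\textbf{The paper's route.} The paper never uses geodesics or eigenvalue perturbation. It works in a graph chart with $S=\mathrm{diag}(\lambda_1,\dots,\lambda_n)$ and computes the Levi-Civita connection $\nabla_AB=-ASGB-BSGA$ of the Grassmannian metric in the $S$-coordinates. It then writes $\log\Lambda=\log\det(I+\wedge^2S)-\frac{n-1}{2}\log\det(I+S^2)$ and expands $\Hess\log\Lambda(A,A)$ into four traces $T_1,\dots,T_4$. These are recombined after splitting each into diagonal and off-diagonal parts in $\wedge^2\mathbb R^n$.

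\textbf{Your route.} You use the homogeneity of $U(n)/SO(n)$ to move to a diagonal point. You then replace the Hessian by the second derivative along the geodesics $De^{ita}\mathbb R^n$, and get the second variation of the angles from perturbation theory for $W=UU^T$.

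Carrying out your bookkeeping gives $c=2$, namely $\theta_j''(0)=2\sum_{k\neq j}\cot(\theta_j-\theta_k)\,a_{jk}^2$. Using $\tan x-\tan y=\sin(x-y)/(\cos x\cos y)$, your coefficient of $a_{jl}^2$ becomes
$-4-\sum_{k\neq j,l}2\cos(\theta_j-\theta_l)/\bigl(\cos(\theta_j-\theta_k)\cos(\theta_l-\theta_k)\bigr)$.
So your $f''$ is exactly the formula of Remark~\ref{rmk:Hess-different-basis}, with $\lambda_j=\tan\theta_j$ and $\tilde a_{jk}=a_{jk}$. Your $a$ is automatically the $[\alpha]_{L_S}$ representation, which is the one used in Theorem~\ref{thm:Lambda-elliptic-equation}.

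\textbf{What each route buys.} Your route is much shorter and explains the sign conceptually: eigenvalue repulsion is controlled by the monotonicity of $\tan$. It also shows that only the triple products $\cos(\theta_j-\theta_l)\cos(\theta_j-\theta_k)\cos(\theta_l-\theta_k)>0$ matter, which is the condition appearing in Proposition~\ref{prop:AI-characterization}. The paper's computation is a purely algebraic identity valid at every point of the chart, including coincident eigenvalues. It therefore needs no perturbation theory, no identification of geodesics, and no density argument, and it is stated directly in the $\lambda$-coordinates used elsewhere in the paper.

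\textbf{Two points to record when you write it up.}
\begin{enumerate}
\item The curves $De^{ita}\mathbb R^n$ are geodesics because $ia$ acts on $\mathbb R^{2n}$ as $\begin{pmatrix}0&-a\\ a&0\end{pmatrix}$. This lies in the $\mathrm{Hom}(L_0,L_0^\perp)$ part of $\mathfrak{so}(2n)$, so these are geodesics of $G(n,2n)$ lying in ${\rm Lag}^+(n)$.
\item The degenerate case is justified not by smoothness in the angles, which are not smooth coordinates at coincidences. It follows because $\log\Lambda$ is smooth on the open set $\{\Lambda>0\}\subset{\rm Lag}^+(n)$. Hence $\Hess\log\Lambda$ is a continuous tensor, and nonpositivity passes from the dense distinct-angle locus to all of $\mathcal{AI}(n)$.
\end{enumerate}
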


Theorem~\ref{thm:main-thm-log-concavity} is the fundamental analytic ingredient underlying the applications developed in this paper.
We show that the almost isoclinic condition imposes subharmonicity or monotonicity properties for $\Lambda$ on minimal Lagrangians and along Lagrangian mean curvature flows, leading to rigidity and regularity consequences. 
As an application, we prove that any almost isoclinic minimal Lagrangian regular cone must be a union of Lagrangian $n$-planes, which implies the following Bernstein-type result.

\begin{thm}
	\label{thm:Bernstein-AC}
	Let $M$ be a complete connected almost isoclinic minimal Lagrangian in $\mathbb{C}^n$.
    If there exists $c>0$ such that $\Lambda\ge c,$ then $M$ is a Lagrangian $n$-plane.
\end{thm}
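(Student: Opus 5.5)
The plan is a blow-down argument combined with a monotonicity/subharmonicity property of $\log\Lambda$, along the lines announced after Theorem~\ref{thm:main-thm-log-concavity}.

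\textbf{Step 1: subharmonicity.} Since $M$ is minimal, its Gauss map $\gamma: M\to {\rm Lag}^+(n)$ is harmonic by the Ruh--Vilms theorem. For a harmonic map into a region where $f=\log\Lambda$ is concave, the composition formula
\[
\Delta (f\circ\gamma)=\sum_i \Hess f(d\gamma(e_i),d\gamma(e_i)) + df(\tau(\gamma))
\]
has vanishing second term. By Theorem~\ref{thm:main-thm-log-concavity}, $\log\Lambda$ pulled back to $M$ is therefore superharmonic: $\Delta\log\Lambda\le 0$ on $M$. More precisely,
\[
\Delta \log\Lambda\le -\kappa(\Lambda)\,|A|^2,
\]
where $\kappa$ is positive wherever $\Lambda$ is bounded below. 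I will try to extract such a quantitative strict-concavity bound from the proof of Theorem~\ref{thm:main-thm-log-concavity}, at least in directions transverse to $\Lambda$'s level sets, for instance from the $\sum\sec^2(\theta_j-\theta_k)$ terms.

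\textbf{Step 2: tangent cone at infinity.} Here I need some area growth control. I would first try to show that almost isoclinic with $\Lambda\ge c$ forces a Lagrangian angle bound or a calibration-type inequality giving Euclidean volume growth. Alternatively, I would just consider blow-downs $M_R=R^{-1}M$ as varifolds, assuming or deriving from this bound that $M$ has bounded density at infinity. By monotonicity, the limits are stationary Lagrangian cones. The condition $\Lambda\ge c$ passes to the limit via Allard-type convergence or via the Gauss-map-valued Young measures. The limit cone is then almost isoclinic with $\Lambda\ge c$, and regular away from a small singular set. The paper's cone result, that an almost isoclinic minimal Lagrangian regular cone is a union of planes, then applies.

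I would prove that result in the following way. On the link, the degree-zero homogeneous function $\log\Lambda$ is superharmonic on a compact minimal submanifold of the sphere, hence constant. Equality in the Hessian inequality then forces the second fundamental form to vanish, so the cone is flat.

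\textbf{Step 3: conclusion.} The blow-down is then a multiplicity-$k$ union of planes. For a connected smooth $M$ this should force density $1$ at infinity. The density ratio is monotone and equals $1$ at every point, since $M$ is smooth. If the density at infinity is also $1$, then $M$ is a cone over each of its points, hence a plane.

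The main obstacle is Step 2: obtaining volume growth and regularity of the tangent cone at infinity. I would first try to use $\Lambda\ge c$ to show that the Lagrangian angle (the sum of the $\theta_j$) has oscillation less than $\pi$ after rotation. That would give a calibration argument for the volume bound, and Allard regularity plus the $\varepsilon$-regularity afforded by the superharmonic quantity for the multiplicity issue.

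Alternatively, I would bypass blow-downs altogether with a Liouville argument. This uses the Step 1 inequality together with a Bochner/Simons-type estimate to show $|A|^2\Lambda^{-p}$ is subharmonic, then a mean value inequality over extrinsic balls forces $|A|\equiv0$. This route also requires volume growth, so either way the crux is the area bound.
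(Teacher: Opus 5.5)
\paragraph{The gap.} The problem is in Steps 2--3. Nothing in your plan produces Euclidean area growth or multiplicity one at infinity, and the routes you suggest cannot supply them.

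\begin{itemize}
\item For a connected minimal Lagrangian the Lagrangian angle is already constant, so asking for oscillation less than $\pi$ is vacuous.
\item The special Lagrangian calibration only gives area minimization, and hence lower density bounds via monotonicity. It never gives an upper bound on $\mathrm{vol}(M\cap B_R)/R^n$. For instance, the graph $\{w_2=\sin w_1\}$ is special Lagrangian after the hyperk\"ahler rotation used in the proof of Theorem~\ref{thm:Bernstein}. It is connected and properly embedded, yet its area in $B_R$ grows like $R^3$.
\item Your claim in Step 3 that connectedness forces density one at infinity is false. Lagrangian catenoids and Lawlor necks are connected, smooth and minimal, and they blow down to two planes.
\end{itemize}

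So until $\Lambda\ge c$ is used in some global way, neither the blow-down route nor the Liouville route gets started.

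\paragraph{The missing idea.} $\Lambda\ge c$, combined with the constancy of the Lagrangian angle $\Theta$, makes $M$ an entire graph with bounded gradient over a fixed rotated plane (Proposition~\ref{prop:graphicality-of-AI-minimal}).

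\begin{enumerate}
\item At a point, take a chart $e^{i\varphi}\mathbb{R}^n$ in which Definition~\ref{def:AI} holds, with characteristic angles $\theta_k$. Pass to $\psi=\varphi+\bar\theta$, where $\bar\theta=\frac1n\sum_k\theta_k$.
\item Then $e^{in\psi}=e^{i\Theta}$, so $\psi$ lies in the finite set $\{(\Theta+2\pi t)/n : 0\le t\le n-1\}$ modulo $2\pi$. The new angles $\tilde\theta_k=\theta_k-\bar\theta$ sum to zero.
\item Every factor of $\Lambda=\prod_{j<k}\cos(\tilde\theta_j-\tilde\theta_k)$ lies in $(0,1]$, so each factor is at least $c$. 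Hence $|\tilde\theta_j-\tilde\theta_k|\le\arccos c$, and therefore $|\tilde\theta_k|\le\frac{n-1}{n}\arccos c<\frac{\pi}{2}$.
\item This uniform bound makes each chart piece closed, so connectedness selects a single $\psi$ for all of $M$.
\item Completeness and path lifting then show that projection to $e^{i\psi}\mathbb{R}^n$ is a covering of a simply connected space, hence a diffeomorphism.
\end{enumerate}

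The bounded gradient gives Euclidean area growth and a blow-down that is a single multiplicity-one cone with $\Lambda\ge c$. From there, your cone argument (integrating $\Delta_S\log\Lambda_S\le -c_n|A|^2$ over the link) and equality in monotonicity finish exactly as in the paper.

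Incidentally, Theorem~\ref{thm:Lambda-elliptic-equation} already gives $\Delta\log\Lambda\le -c_n|A|^2$ with $c_n$ independent of $\Lambda$, so there is no $\kappa(\Lambda)$ to extract.
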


The Bernstein theorem is known to have an equivalent form of a curvature estimate which is recorded in Corollary~\ref{cor:local-curv-est}.
The positivity of $c$ in the bound is necessary especially when $n$ is odd as the $n$-dimensional Lagrangian catenoid satisfies $\Lambda\ge 0.$
When the dimension of $M$ is two, we obtain a stronger rigidity result based on complex analysis arguments; see Theorem~\ref{thm:Bernstein} and Remark~\ref{rmk:sharpness}.


An important special case of Theorem~\ref{thm:Bernstein-AC} occurs when $M$ is known to be an entire graph with bounded gradient.
Bernstein-type theorems for entire special Lagrangian graphs have been established in various situations;
see \cite{Y02,TW02,Y06}.
In particular, in a form relevant to the present work (cf. Lemma~\ref{lem:AI-graph}), if \(u:\mathbb{R}^n\to\mathbb{R}\) is an entire solution of the special Lagrangian equation and \(\lambda_j\)'s are the eigenvalues of \(\n^2 u\), then \(u\) is a quadratic polynomial provided that $\n^2 u$ is bounded and $\lambda_j\lambda_k\geq -3/2$.\footnote{In comparison, the almost isoclinic condition is $\lambda_j\lambda_k>-1$ but an almost isoclinic submanifold need not be graphical.}

The theory identifies a geometric regime that balances structure and flexibility, which is restrictive enough for convexity methods, yet broad enough to capture global and singularity-forming phenomena. 
It also singles out a natural domain in the Lagrangian Grassmannian where symmetric-space geometry can interact effectively with minimal submanifold theory and Lagrangian mean curvature flow.
This paper develops the foundational properties of the almost isoclinic region and establishes the principal geometric and analytic consequences for minimal Lagrangian geometry.
Existence results and regularity properties of almost isoclinic submanifolds along a Lagrangian mean curvature flow will be investigated in future work.

We organize the paper as follows.
In Section~\ref{sec:Pre}, we introduce the geometry of the oriented Lagrangian Grassmannian ${\rm Lag}^+(n),$ define the almost isoclinic region and almost isoclinic submanifolds, and construct the canonical function $\Lambda$.
In Section~\ref{sec:log-concavity}, we recall the Grassmannian metric on ${\rm Lag}^+(n)$ and discuss the log-concavity property of $\Lambda$ and its consequences.
In Section~\ref{sec:ex}, we present examples of almost isoclinic submanifolds.
In Section~\ref{sec:proof-Lambda}, we evaluate the Hessian of $\Lambda$ which implies Theorem~\ref{thm:main-thm-log-concavity}.
Finally, in Section~\ref{sec:Bernstein}, we prove Theorem~\ref{thm:Bernstein-AC} and related Bernstein-type results.

\subsection*{\bf Acknowledgment}
Part of the work was completed while T.-K. Lee and M.-T. Wang were visiting Academia Sinica, the National Center of Theoretical Sciences, and National Taiwan University, and they thank all three institutions for their hospitality. 
T.-K. Lee acknowledges the support from Ming-Lun Hsieh, Man-Chun Lee, Connor Mooney, Chung-Jun Tsai, and NSF grant DMS-2533558.
M.-T.~Wang was partially supported by the National Science Foundation under Grant No. DMS-2404945, and by a Simons Foundation Travel Grant.

\section{\bf Preliminaries}
\label{sec:Pre}

\subsection{Almost isoclinic Lagrangian surfaces in $\mathbb{C}^2$}
\label{sec:2D-AI}

Let \((z_1,z_2)\) denote the standard complex coordinates on \(\mathbb{C}^2\), and let
\(
\frac{i}{2}\left(dz_1\wedge d\bar z_1+dz_2\wedge d\bar z_2\right)
\)
be the standard Kähler form. 
Recall that an oriented surface is Lagrangian if the restriction of the Kähler form to the surface vanishes. 
For such a surface, we define \begin{equation}\label{Lambda_n=2}
\Lambda=*\operatorname{Re}\left(dz_1\wedge d\bar z_2\right),
\end{equation} 
where $*$ is the Hodge star on the surface.  
In this setting, a Lagrangian surface is called \textit{almost isoclinic} if $\Lambda>0,$ i.e., the restriction of $\operatorname{Re}\left(dz_1\wedge d\bar z_2\right)$ to the surface is positive. 
In contrast, the almost calibrated condition
(see \cite[Remark~5.1]{W01-JDG}) corresponds to the positivity of the
restriction of
\(
\operatorname{Re}\left(dz_1\wedge dz_2\right).
\)

As noted in the introduction, if the Lagrangian surface is given as the graph
of the gradient \(\nabla u\) of a function \(u\) defined on \(\mathbb{R}^2\),
then the ``slope'' of the surface is encoded by the Hessian \(\nabla^2 u\).
In this setting, the almost isoclinic condition becomes
\begin{align}\label{2d-theta}
|\theta_1-\theta_2|<\frac{\pi}{2},
\end{align}
where the eigenvalues of \(\nabla^2 u\) are \(\tan\theta_1\) and
\(\tan\theta_2\).

Another way to describe this condition is through the oriented Lagrangian Grassmannian \(\operatorname{Lag}^+(2)\). 
Using the decomposition of two-forms into self-dual and anti-self-dual parts, the space of oriented two-planes in \(\mathbb{R}^4\) can be identified with \(\mathbb S^2\times \mathbb S^2\), where the first factor records the self-dual component and the second records the anti-self-dual component. 
The Kähler form is self-dual, so the Lagrangian condition cuts out an equator in the self-dual factor; thus \(\operatorname{Lag}^+(2)\) can be identified with \(\mathbb S^1\times \mathbb S^2\). 
On the other hand, the two-form $\operatorname{Re}\left(dz_1\wedge d\bar z_2\right)$ is anti-self-dual. 
Hence, the almost isoclinic condition requires the anti-self-dual component of the Gauss map to lie in an upper hemisphere of the anti-self-dual two-sphere. 
Equivalently, the Gauss map of an almost isoclinic surface takes values in the product of \(\mathbb S^1\) with this upper hemisphere \(\mathbb S^2_+\), a manifestly convex subset of \(\mathbb S^1\times \mathbb S^2\) on which $\Lambda$ is positive.

\begin{figure}[h]
	\centering
	\includegraphics[width=7cm]{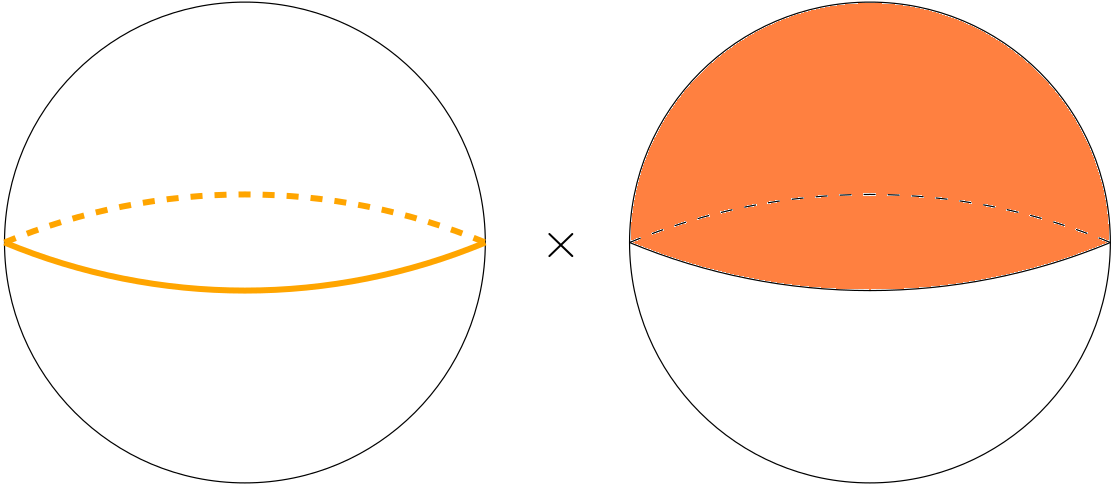}
	\caption{The oriented Grassmannian of two-planes in $\bb R^4$ can be identified with~$\mathbb S^2\times \mathbb S^2,$ and the almost isoclinic region can be identified with $\mathbb S^1\times \mathbb S^2_+,$ the orange region, whose area is exactly half of that of ${\rm Lag}^+(2)\simeq \mathbb S^1\times \mathbb S^2.$}
    \label{fig:Gauss}
\end{figure}

In \cite{W01-MRL}, the graph $\Sigma$ of a symplectomorphism from $(\mathbb{C},\omega_1=dx_1\wedge dy_1)$ to $(\mathbb{C},\omega_2=dx_2\wedge dy_2)$ is regarded as an embedded surface in $\mathbb{C}^2$. With respect to the symplectic form $\omega_1-\omega_2$ on $\mathbb{C}^2$, $\Sigma$ is Lagrangian and the restriction of $\omega_1+\omega_2$ to $\Sigma$ is positive. 
After choosing the orientation for which $\omega_1-\omega_2$ is self-dual and $\omega_1+\omega_2$ is anti-self-dual, this condition is seen to be equivalent to the almost isoclinic condition for such a graphical surface.

However, this picture is special to the case \(n=2\). 
\(\operatorname{Lag}^+(n)\) for $n\geq 3$ no longer decomposes as a product analogous to \(\mathbb S^1\times \mathbb S^2\); apart from the Maslov phase direction, its geometry is irreducible. 
In general dimensions, we must therefore work in a graph chart of \(\operatorname{Lag}^+(n)\) to define the almost isoclinic condition as in \eqref{eq:AI_angle}, and then extend this condition beyond the graphical
setting.

\subsection{Description of the oriented Lagrangian Grassmannian ${\rm Lag}^+(n)$}
In $\bb C^n \simeq \bb R^{2n}$ with coordinates $z_j=x_j+iy_j,j=1,\cdots,n,$ consider the symplectic form $\omega = dx_1\wedge dy_1 + \cdots + dx_n\wedge dy_n$
and the orientation given by $dx_1\wedge dy_1\wedge\cdots\wedge dx_n\wedge dy_n.$
An $n$-plane $L\sbst\bb R^{2n}$ is called a \textit{ Lagrangian $n$-plane} if $\omega|_L=0.$
Let ${\rm Lag}^+(n)$ be the \textit{oriented Lagrangian Grassmannian}, i.e., the space of oriented Lagrangian linear subspaces in $\bb R^{2n}.$

Any $L \in {\rm Lag}^{+}(n)$ admits an oriented orthonormal basis in \(\mathbb{R}^{2n}\). 
Equivalently, such an $L$ can be represented by a
\(2n \times n\) matrix of the form
$\begin{pmatrix}
X \\
Y
\end{pmatrix}$
whose columns form an oriented orthonormal basis of $L$. Here \(X\) and \(Y\) are real \(n \times n\) matrices and the orthonormality of the basis gives
\begin{equation}\label{ortho}
X^{T}X+Y^{T}Y=I,
\end{equation}
while the Lagrangian condition gives
\begin{equation}\label{Lag}
X^{T}Y-Y^{T}X=0.
\end{equation}
Together, these conditions are equivalent to $X+iY \in U(n).$
We will freely use either
$\begin{pmatrix}
X \\
Y
\end{pmatrix}$ 
or $X+iY$ to represent the same oriented Lagrangian $n$-plane \(L\).

We let $L_B$ denote the reference base $n$-plane, i.e., the $n$-plane represented by $\begin{pmatrix}
I\\0
\end{pmatrix}.$
Note that an $n$-plane $L=\begin{pmatrix}
X\\Y
\end{pmatrix}\in{\rm Lag}^+(n)$ is graphical over $L_B$ if and only if $X$ is invertible.  
In this case, $L$ is the graph of the matrix $YX^{-1},$ which is called the graph matrix of $L$ over $L_B.$ 
Conditions~\eqref{ortho} and~\eqref{Lag} imply that $YX^{-1}$ is symmetric. 
We first collect general identities for such $X$ and $Y.$

\begin{lemma}
	\label{lem:detX>0}
	Let $n\ge 2.$ 
	Suppose $X$ and $Y$ are $n\times n$ matrices that satisfy \eqref{ortho} and \eqref{Lag}. 
	If $X$ is invertible, then
	\begin{align*}
	(\det X)^2
	= \prod_{k=1}^n \frac 1{1+\lambda_k^2}, 
	\end{align*} where $\lambda_k$'s are the eigenvalues of the matrix $YX^{-1}$.
\end{lemma}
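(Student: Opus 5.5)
Set $\Lambda_0 := YX^{-1}$. By the discussion preceding the statement, conditions \eqref{ortho} and \eqref{Lag} make $\Lambda_0$ symmetric. Its eigenvalues $\lambda_k$ are therefore real, and $I+\Lambda_0^2$ is positive definite with eigenvalues $1+\lambda_k^2$. The idea is to rewrite the orthonormality condition \eqref{ortho} in terms of $X$ and $\Lambda_0$, and then take determinants.

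First, since $X$ is invertible, write $Y=\Lambda_0 X$. Substituting into \eqref{ortho} gives
\begin{equation*}
I = X^TX + X^T\Lambda_0^T\Lambda_0 X = X^T\bigl(I+\Lambda_0^2\bigr)X,
\end{equation*}
where we used $\Lambda_0^T=\Lambda_0$. Taking determinants gives $(\det X)^2\det(I+\Lambda_0^2)=1$. Since $\Lambda_0$ is orthogonally diagonalizable, $\det(I+\Lambda_0^2)=\prod_{k=1}^n(1+\lambda_k^2)$. This yields the identity.

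The only step needing care is the symmetry of $YX^{-1}$, on which everything rests. It is stated in the text, but it is quick to verify from \eqref{Lag}. Multiply $X^TY=Y^TX$ on the left by $(X^T)^{-1}$ and on the right by $X^{-1}$. This gives $YX^{-1}=(X^T)^{-1}Y^T=(YX^{-1})^T$.

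Beyond that point the argument is routine. Note that the hypothesis $n\ge 2$ plays no role here, and neither does the orientation. The orientation only fixes the sign of $\det X$, which does not matter for $(\det X)^2$.
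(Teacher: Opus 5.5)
Your proof is correct and takes essentially the same route as the paper: substitute $Y=(YX^{-1})X$ into \eqref{ortho} to obtain $X^T\bigl(I+(YX^{-1})^2\bigr)X=I$, then take determinants. The differences are cosmetic: the paper first conjugates everything by a $Q\in SO(n)$ that diagonalizes $YX^{-1}$, whereas you diagonalize only when evaluating $\det\bigl(I+(YX^{-1})^2\bigr)$, and you also check the symmetry of $YX^{-1}$ explicitly, which the paper simply asserts.
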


\begin{proof}
	The conditions \eqref{ortho} and \eqref{Lag} imply that $YX^{-1}$ is symmetric, and we diagonalize  $YX^{-1}$ by
	\begin{align*}
	Q^{T}\pr{YX^{-1}}Q 
	=D
	:= {\rm diag}\pr{\lambda_1,\cdots,\lambda_n}
	\end{align*}
	for some $Q\in {SO}(n)$ and $\lambda_k\in\bb R.$
	This implies $Q^{T}YQ = DQ^{T}XQ,$ which, along with \eqref{ortho}, implies
	\begin{align*}
	I= Q^TX^TXQ + Q^TY^TYQ
	& = Q^TX^TXQ + Q^TX^TQ D^T\cdot 
	DQ^{T}XQ
	= Q^TX^TQ \pr{I + D^2} Q^TXQ,
	\end{align*}
	so 
	$Q^TX^{-T}X^{-1}Q
	= I+D^2
	= {\rm diag}\pr{1+\lambda_1^2,\cdots,1+\lambda_n^2}.$
	Since $\det Q=1,$ this implies the desired formula.
\end{proof}

Next, we provide transformation formulas for eigenvalues of the graph matrices when we rotate an $n$-plane by a ${U}(1)$-action.
Later, we will use these formulas to study the rotational invariance of a distinguished region in ${\rm Lag}^+(n).$

\begin{lemma}
	\label{lem:lambda-rotate}
	Suppose that $X$ and $Y$ are $n\times n$ matrices satisfying \eqref{ortho} and \eqref{Lag}. 
	Define real matrices $\widetilde X$ and $\widetilde Y$ by
	\[
	\widetilde X+i\widetilde Y=e^{-i\varphi}(X+iY).
	\]
	Then $\widetilde X$ and $\widetilde Y$ also satisfy \eqref{ortho} and \eqref{Lag}. 
	Moreover, if both $X$ and $\widetilde X$ are invertible, then
	the eigenvalues $\widetilde\lambda_k$'s of
	$\widetilde Y\widetilde X^{-1}$ and the eigenvalues $\lambda_k$'s of
	$YX^{-1}$ are related by
	\begin{align*}
	\widetilde\lambda_k
	=
	\frac{\lambda_k\cos\varphi-\sin\varphi}
	{\cos\varphi+\lambda_k\sin\varphi}
	\end{align*}
	and there is a transformation formula
	\begin{align}\label{product-transf}
	1 + \td\lambda_j \td\lambda_k
	= \frac{1+\lambda_j\lambda_k}{\pr{\cos\varphi + \sin\varphi\cdot \lambda_j}\pr{\cos\varphi + \sin\varphi\cdot \lambda_k}}
	\end{align}
	for any $j$ and $k.$
\end{lemma}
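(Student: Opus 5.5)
We first check that $\widetilde X,\widetilde Y$ satisfy \eqref{ortho} and \eqref{Lag}. As noted above, these two conditions together are equivalent to $X+iY\in U(n)$. Since $e^{-i\varphi}$ is a unit scalar, $e^{-i\varphi}(X+iY)\in U(n)$ as well, so this part is immediate. Explicitly, the real and imaginary parts are
\[
\widetilde X=\cos\varphi\, X+\sin\varphi\, Y,\qquad \widetilde Y=\cos\varphi\, Y-\sin\varphi\, X .
\]

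For the eigenvalue relation, we reuse the diagonalization from the proof of Lemma~\ref{lem:detX>0}. Choose $Q\in SO(n)$ with $Q^T(YX^{-1})Q=D={\rm diag}(\lambda_1,\dots,\lambda_n)$. Then $Y=QDQ^TX$, which gives
\[
\widetilde X=Q(\cos\varphi\, I+\sin\varphi\, D)Q^TX,\qquad \widetilde Y=Q(\cos\varphi\, D-\sin\varphi\, I)Q^TX .
\]
Since $X$ and $\widetilde X$ are both invertible, the diagonal matrix $\cos\varphi\, I+\sin\varphi\, D$ is invertible, so $\cos\varphi+\lambda_k\sin\varphi\neq0$ for every $k$. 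Therefore
\[
\widetilde Y\widetilde X^{-1}=Q(\cos\varphi\, D-\sin\varphi\, I)(\cos\varphi\, I+\sin\varphi\, D)^{-1}Q^T .
\]
This is $Q$ conjugating a diagonal matrix, so its eigenvalues are
\[
\widetilde\lambda_k=\frac{\lambda_k\cos\varphi-\sin\varphi}{\cos\varphi+\lambda_k\sin\varphi},
\]
with the same eigenvector labeling as the $\lambda_k$. This simultaneous labeling is what makes the pairwise formula \eqref{product-transf} meaningful for every pair $j,k$.

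Finally, \eqref{product-transf} follows by direct computation. Put everything over the common denominator $(\cos\varphi+\sin\varphi\lambda_j)(\cos\varphi+\sin\varphi\lambda_k)$. The numerator of $1+\widetilde\lambda_j\widetilde\lambda_k$ is then
\[
(\cos\varphi+\sin\varphi\lambda_j)(\cos\varphi+\sin\varphi\lambda_k)+(\lambda_j\cos\varphi-\sin\varphi)(\lambda_k\cos\varphi-\sin\varphi).
\]
When this is expanded, the cross terms $\pm\cos\varphi\sin\varphi(\lambda_j+\lambda_k)$ cancel. The remaining terms are $(\cos^2\varphi+\sin^2\varphi)(1+\lambda_j\lambda_k)=1+\lambda_j\lambda_k$, which is the claimed formula.

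The only point needing care is the invertibility of $\cos\varphi\, I+\sin\varphi\, D$, and it is handled by the factorization $\widetilde X=Q(\cos\varphi\, I+\sin\varphi\, D)Q^TX$.
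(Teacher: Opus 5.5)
Your proof is correct and takes essentially the same route as the paper. Both write $\widetilde X,\widetilde Y$ as real combinations of $X,Y$, factor out $X$ to express $\widetilde Y\widetilde X^{-1}=(\cos\varphi\, YX^{-1}-\sin\varphi)(\cos\varphi+\sin\varphi\, YX^{-1})^{-1}$, read off the eigenvalues, and then verify \eqref{product-transf} by direct algebra; your explicit diagonalization by $Q$ just makes the simultaneous eigenvalue labeling and the invertibility of $\cos\varphi\, I+\sin\varphi\, YX^{-1}$ a bit more transparent than the paper's version.
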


\begin{proof}
	Since both $X+iY$ and $e^{-i\varphi}I$ are in ${U}(n),$ it follows that $\td X + i\td Y\in {U}(n).$
	As a result, $\td X $ and $\td Y$ satisfy \eqref{ortho} and \eqref{Lag}.
	By $e^{-i\varphi}=\cos\varphi - i\sin\varphi,$ a direct multiplication leads to 
	\begin{align*}
	\td X
	&= \cos\varphi\cdot X 
	+ \sin\varphi\cdot Y \,
	\text{ and }\,
	\td Y
	= -\sin\varphi\cdot X 
	+ \cos\varphi\cdot Y.
	\end{align*}
	Thus, 
	\begin{align*}
	\td Y \td X^{-1}
	& = \pr{-\sin\varphi\cdot X 
		+ \cos\varphi\cdot Y}
	\pr{\cos\varphi\cdot X 
		+ \sin\varphi\cdot Y }^{-1}\\
	& = \pr{-\sin\varphi+ \cos\varphi \cdot Y X^{-1}} 
	\pr{\cos\varphi + \sin\varphi\cdot Y X^{-1}}^{-1} ,
	\end{align*}
	so the eigenvalues of $\td Y \td X^{-1}$ are
	\begin{align*}
	\td \lambda_{k}
	= \frac{-\sin\varphi + \cos\varphi\cdot \lambda_{k}}{\cos\varphi + \sin\varphi\cdot \lambda_k},
	\end{align*}
	and \eqref{product-transf} follows for any $j$ and $k.$
\end{proof}

We will study rotational invariance of subsets in ${\rm Lag}^+(n).$
Thus, for $\varphi\in\bb R,$ the graph chart is defined to be
\begin{align*}
\mathcal O_\varphi
:= \set{
	L\in {\rm Lag}^+(n)
	:
	L \text{ is graphical over } e^{i\varphi}L_B
}.
\end{align*}
Each $\mathcal O_\varphi$ can be identified with the space of symmetric $n\times n$ matrices. 

\begin{lemma}
	Suppose $L\in {\rm Lag}^+(n)$ can be represented by $X+iY$ and define
	\begin{equation}\label{X-varphi}
	\begin{split}
	X_\varphi&:=\cos\varphi\cdot X + \sin\varphi\cdot Y\,\,\text{ and }\,\,
	Y_\varphi:=-\sin\varphi\cdot X + \cos\varphi\cdot Y.
	\end{split}
	\end{equation} 
	Then $L\in \mathcal O_\varphi$ if and only if $X_\varphi$ is invertible. 
	In this case, the graph matrix of $L$ with respect to $e^{i\varphi}L_B$ is $Y_\varphi X_\varphi^{-1}$.
	
\end{lemma}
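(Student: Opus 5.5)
The plan is to reduce the statement to the base chart $\mathcal O_0$ by a unitary rotation. Multiplication by $e^{-i\varphi}$ is a linear isometry of $\mathbb{C}^n$ commuting with $J$. It preserves the symplectic form and orientation, so it maps ${\rm Lag}^+(n)$ to itself. It also maps $e^{i\varphi}L_B$ to $L_B$. Hence $L$ is graphical over $e^{i\varphi}L_B$ if and only if $e^{-i\varphi}L$ is graphical over $L_B$.

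Concretely, if $L$ is represented by $X+iY$, then $e^{-i\varphi}L$ is represented by $e^{-i\varphi}(X+iY)$. As computed in the proof of Lemma~\ref{lem:lambda-rotate}, its real and imaginary parts are exactly $X_\varphi$ and $Y_\varphi$ from \eqref{X-varphi}. That lemma also shows that $X_\varphi$ and $Y_\varphi$ satisfy \eqref{ortho} and \eqref{Lag}. So, by the criterion stated just before Lemma~\ref{lem:detX>0}, $e^{-i\varphi}L$ is graphical over $L_B$ if and only if $X_\varphi$ is invertible. This gives the first claim.

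To justify that criterion directly, let $L$ be spanned by the columns of $\begin{pmatrix} X\\ Y\end{pmatrix}$, which are linearly independent by \eqref{ortho}. The orthogonal projection of $L$ onto $L_B=\mathbb{R}^n\times\{0\}$ sends the columns to the columns of $X$. Graphicality means this projection is an isomorphism, which is equivalent to $X$ being invertible.

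For the second claim, suppose $X_\varphi$ is invertible. Then $e^{-i\varphi}L$ is the column span of $\begin{pmatrix} X_\varphi\\ Y_\varphi\end{pmatrix}$. Right multiplying by $X_\varphi^{-1}$ shows this is also the column span of $\begin{pmatrix} I\\ Y_\varphi X_\varphi^{-1}\end{pmatrix}$, which is the graph of $Y_\varphi X_\varphi^{-1}$ over $L_B$.

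Applying $e^{i\varphi}$ then identifies $L$ as the image under $e^{i\varphi}$ of this graph. In other words, $L$ is the graph of $Y_\varphi X_\varphi^{-1}$ over $e^{i\varphi}L_B$, with respect to the rotated frame $e^{i\varphi}(\mathbb{R}^n)$ and $e^{i\varphi}(i\mathbb{R}^n)$. This is what "graph matrix with respect to $e^{i\varphi}L_B$" means. Symmetry of $Y_\varphi X_\varphi^{-1}$ follows from \eqref{Lag} applied to $X_\varphi$ and $Y_\varphi$.

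The only delicate point is the convention. The graph matrix over $e^{i\varphi}L_B$ must be understood relative to the rotated splitting $e^{i\varphi}\mathbb{R}^n\oplus e^{i\varphi}i\mathbb{R}^n$. I would state this convention explicitly; once it is fixed, the rest is the direct computation above.
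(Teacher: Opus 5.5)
Your proposal is correct and follows the paper's proof essentially verbatim. Like the paper, you use $U(n)$-invariance to rotate by $e^{-i\varphi}$, note that $e^{-i\varphi}(X+iY)=X_\varphi+iY_\varphi$, and then read off graphicality and the graph matrix $Y_\varphi X_\varphi^{-1}$ in the base chart; your justification of the invertibility criterion via the orthogonal projection onto $L_B$, and your explicit statement of the rotated-frame convention, simply fill in details the paper leaves implicit.
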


\begin{proof}
	By the ${U}(n)$-invariance, $L$ is graphical over $e^{i\varphi}L_B$ if and only if $e^{-i\varphi}L$ is graphical over $L_B.$
	An oriented basis of $e^{-i\varphi}L$ is given by
	\begin{align*}
	e^{-i\varphi}(X+iY)
	& = \pr{\cos\varphi - i\sin\varphi}(X+iY) = X_\varphi
	+ iY_\varphi.
	\end{align*}
	Hence, the $n$-plane $e^{-i\varphi}L$ can be represented by 
	$X_\varphi+iY_\varphi.$
	Therefore, $e^{-i\varphi}L$ is graphical over $L_B$ if and only if $X_\varphi$ is invertible, and in such a case, the graph matrix of $L$ is $Y_\varphi X_\varphi^{-1}.$ 
\end{proof}

Next, we show that ${\rm Lag}^+(n)$ can be covered by finitely many such graph charts. 

\begin{lemma}
	\label{lem:graph-chart}
	Let $L\in {\rm Lag}^+(n).$
	Then there exists $\varphi\in\set{0,\frac{\pi}{n+1},\frac{2\pi}{n+1},\cdots,\frac{n\pi}{n+1}}$ such that $L\in\mathcal O_{\varphi}.$ Equivalently,
	\[
	\operatorname{Lag}^+(n)
	=
	\bigcup_{k=0}^{n}
	\mathcal O_{\frac{k\pi}{n+1}} .
	\]
\end{lemma}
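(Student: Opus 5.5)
We reduce graphicality over a rotated base plane to a determinant condition and count the zeros of a trigonometric polynomial. Represent $L$ by $U = X+iY\in U(n)$. By the preceding lemma, $L\in\mathcal O_\varphi$ if and only if $X_\varphi = \cos\varphi\, X + \sin\varphi\, Y$ is invertible. Note that $X_\varphi = \operatorname{Re}\pr{e^{-i\varphi}U}$. So we must show that $\det\operatorname{Re}(e^{-i\varphi}U)\neq 0$ for at least one of the $n+1$ values $\varphi = k\pi/(n+1)$, $k=0,\dots,n$.

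Set $f(\varphi):=\det\pr{\cos\varphi\, X+\sin\varphi\, Y}$. Each entry of the matrix is a linear combination of $\cos\varphi$ and $\sin\varphi$, so $f$ is a homogeneous polynomial of degree $n$ in $(\cos\varphi,\sin\varphi)$. Moreover, $f$ is not identically zero. To see this, diagonalize the unitary $U$ as $U = P\,{\rm diag}(e^{i\alpha_1},\dots,e^{i\alpha_n})P^{-1}$. Better, use the symmetric-space structure: since $X^TY$ is symmetric, one can find $Q_1,Q_2\in O(n)$ with $X = Q_1 \cos\Theta\, Q_2$ and $Y = Q_1\sin\Theta\, Q_2$, where $\Theta={\rm diag}(\theta_1,\dots,\theta_n)$. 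This is a simultaneous SVD of the commuting pair, coming from the fact that $X^TX$ and $Y^TY=I-X^TX$ commute and $X^TY$ is symmetric. Then
\[
f(\varphi)=\det Q_1\det Q_2\prod_{k=1}^n\cos(\theta_k-\varphi),
\]
which vanishes only where $\varphi\equiv\theta_k+\pi/2 \pmod \pi$ for some $k$. Hence, modulo $\pi$, $f$ has at most $n$ zeros, and the $n+1$ distinct residues $k\pi/(n+1)$ cannot all be zeros.

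If I want to avoid the decomposition, an alternative is to argue directly that a nonzero homogeneous degree-$n$ form in $(\cos\varphi,\sin\varphi)$ has at most $n$ zeros in $[0,\pi)$. One factors it over $\mathbb R$ into linear forms and irreducible quadratics, each linear form vanishing at exactly one point of $[0,\pi)$. Nonvanishing of $f$ then follows from a single point: if $f(\varphi)=0$ for all $\varphi$, then $\operatorname{Re}(e^{-i\varphi}U)$ is singular for all $\varphi$. Taking eigenvalues $e^{i\beta_j}$ of $U$ and an eigenvector... this route is messier, so I will use the decomposition.

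The main obstacle is justifying the normal form $X=Q_1\cos\Theta\,Q_2$, $Y=Q_1\sin\Theta\,Q_2$. I would derive it as follows. Take $Q_2$ diagonalizing the symmetric matrix $X^TX$, with eigenvalues $\cos^2\theta_k$, and also the symmetric matrix $X^TY$, which commutes with $X^TX$. The commutation follows from the relations \eqref{ortho} and \eqref{Lag} via $(X^TY)^2=X^TY Y^TX=X^TX(I-X^TX)$ up to ordering; refining on eigenspaces handles multiplicities. Then the columns of $XQ_2^T$ and $YQ_2^T$ are orthogonal, with norms $|\cos\theta_k|$ and $|\sin\theta_k|$, and they are aligned, which yields a common $Q_1$. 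Equivalently, and most cleanly, one can use the $O(n)\times O(n)$ (KAK) decomposition of $U(n)$: $U=Q_1e^{i\Theta}Q_2$. I would cite this standard fact, giving $e^{-i\varphi}U=Q_1e^{i(\Theta-\varphi)}Q_2$, whose real part has determinant $\pm\prod\cos(\theta_k-\varphi)$.
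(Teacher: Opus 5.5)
Your proof is correct, but it takes a different and heavier route than the paper.

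\textbf{Your route.} Once you have $U=Q_1e^{i\Theta}Q_2$ with $Q_1,Q_2\in O(n)$, the factorization
\[
\det X_\varphi=\det Q_1\det Q_2\prod_k\cos(\theta_k-\varphi)
\]
is immediate. Counting residues modulo $\pi$ then finishes the argument.

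\textbf{The paper's route.} The paper uses no normal form. It writes
\[
X_\varphi=\operatorname{Re}(e^{-i\varphi}U)=\tfrac12 e^{-i\varphi}\bigl(U+z\overline{U}\bigr),\qquad z=e^{2i\varphi}.
\]
So $X_\varphi$ is singular only when $z$ is a root of $p(z)=\det(U+z\overline{U})$. This is a polynomial of degree at most $n$, and it is not identically zero because $p(0)=\det U\neq 0$. Since $\varphi\mapsto e^{2i\varphi}$ is a bijection from $\mathbb R/\pi\mathbb Z$ to the unit circle, there are at most $n$ bad classes of $\varphi$.

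\textbf{Comparison.} Passing to the complex variable $z$ and evaluating at $z=0$, a point off the unit circle, is exactly what settles the non-vanishing question you found messy in your real binary-form approach. In exchange, your route gives sharper information. It locates the singular angles exactly at $\theta_k+\pi/2$ modulo $\pi$, where the $\theta_k$ are the characteristic angles. The cost is reliance on the $O(n)\times O(n)$ (Cartan) decomposition of $U(n)$.

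\textbf{A caution about your sketch of that decomposition.} The identity $(X^TY)^2=X^TX(I-X^TX)$ does not by itself give that $X^TX$ and $X^TY$ commute. A symmetric matrix whose square is a function of $A$ need not commute with $A$; this can fail, for instance, when that function of $A$ is a scalar matrix. The correct one-line reason is that $UU^*=I$ gives $XY^T=YX^T$. Combined with the Lagrangian condition $Y^TX=X^TY$, this yields
\[
X^TY\,X^TX=X^T(YX^T)X=X^T(XY^T)X=X^TX\,X^TY .
\]
Equivalently, the real and imaginary parts $2X^TX-I$ and $2X^TY$ of the symmetric unitary matrix $U^TU$ commute. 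Since you ultimately cite the standard decomposition, this does not affect the correctness of your proof.
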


\begin{proof}
	Suppose $L$ is represented by $U=X+iY$.
	Then for any $\varphi\in\bb R,$ we calculate
	\begin{align*}
	X_\varphi
	= {\rm Re }\, (e^{-i\varphi} U)
	= \frac 12 \pr{e^{-i\varphi} U + e^{i\varphi} \ovl U}
	= \frac 12 e^{-i\varphi} 
	\pr{U + z\ovl U}
	\end{align*}
	if we write $z=e^{2i\varphi}.$
	Therefore, there are at most $n$ values in $\bb R/\pi\bb Z$ of $\varphi$ such that $X_\varphi$ is singular. 
	Since the set $\set{0,\frac{\pi}{n+1},\frac{2\pi}{n+1},\cdots,\frac{n\pi}{n+1}}$ contains $n+1$ different values in $\bb R/\pi\bb Z,$ there exists at least one $\varphi$ in it such that $X_\varphi$ is invertible.
\end{proof}

\subsection{The function $\Lambda$}
\label{sec:Lambda}

In this section, a function $\Lambda$ will be defined on the oriented Lagrangian Grassmannian ${\rm Lag}^+(n).$ 
The definition is built from the natural action of a matrix on second exterior powers, which we recall first.

Let $S$ be an $n \times n$ matrix.
It induces a linear map
$\wedge^2 S : \wedge^2 \mathbb{R}^n
\to \wedge^2 \mathbb{R}^n$
defined on decomposable bivectors by
\[
\wedge^2 S(u \wedge v) := (Su) \wedge (Sv),
\]
and extended linearly to all of \(\wedge^2 \mathbb{R}^n\).
If \(S\) is diagonalizable with eigenvalues \(\lambda_1,\dots,\lambda_n\), then
\(\wedge^2 S\) is diagonalizable with eigenvalues
\begin{align}\label{evalue-transform}
\lambda_j\lambda_k
\,\text{ for }\,
1 \le j < k \le n .
\end{align}

\begin{definition}\label{Lambda}
	Let \(L \in \operatorname{Lag}^+(n)\), and choose a representative
	$X+iY\in U(n)$ for \(L\). 
	We define
	\begin{align}\label{Lambda-def}
	\Lambda(L)
	:= \det\pr{\wedge^2 X + \wedge^2 Y}.
	\end{align}
\end{definition}

This is a well-defined function because it is independent of the choice of representatives. 
More precisely, using the identification
\[
\operatorname{Lag}^+(n) \simeq {U}(n)/{SO}(n),
\]
a different representative of \(L\) is obtained from $X+iY$ by the right action of an element of ${SO}(n).$ 
This right ${SO}(n)$-action
does not change the determinant $\det\left(\wedge^2 X+\wedge^2 Y\right).$
Moreover, an explicit expression of $\Lambda$ in terms of its graph matrix is derived in the following lemma.

\begin{lemma}
	\label{lem:Lambda-in-terms-of-lambda}
	Suppose $n\ge 2$ and $L\in{\rm Lag}^+(n)$ is represented by $X+iY.$
	If $X$ is invertible and if $\lambda_k$'s are the eigenvalues of $YX^{-1},$ then
	\begin{align*}
	\Lambda(L)
	=\pr{\det X}^{n-1}
	\cdot \prod_{1\le j<k\le n} 
	\pr{1+\lambda_j\lambda_k}.
	\end{align*}
\end{lemma}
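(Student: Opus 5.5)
Since $X$ is invertible, write $Y = SX$, where $S := YX^{-1}$ is symmetric (as noted before Lemma~\ref{lem:detX>0}). Then I factor
\[
\wedge^2 X + \wedge^2 Y = \wedge^2 X + \wedge^2 (SX) = \wedge^2 X + (\wedge^2 S)(\wedge^2 X) = \bigl(I + \wedge^2 S\bigr)\,\wedge^2 X,
\]
using functoriality of $\wedge^2$, namely $\wedge^2(AB) = \wedge^2 A\, \wedge^2 B$. This identity is immediate on decomposable bivectors $u\wedge v$ and extends by linearity. Taking determinants then gives
\[
\Lambda(L) = \det\bigl(I+\wedge^2 S\bigr)\cdot \det\bigl(\wedge^2 X\bigr).
\]

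For the first factor I use that $S$ is symmetric, hence diagonalizable with real eigenvalues $\lambda_1,\dots,\lambda_n$. By \eqref{evalue-transform}, $\wedge^2 S$ is diagonalizable with eigenvalues $\lambda_j\lambda_k$ for $j<k$. Therefore $I+\wedge^2 S$ has eigenvalues $1+\lambda_j\lambda_k$, and
\[
\det\bigl(I+\wedge^2 S\bigr)=\prod_{1\le j<k\le n}(1+\lambda_j\lambda_k).
\]
Concretely, I would conjugate by $\wedge^2 Q$ with $Q$ orthogonal diagonalizing $S$, and check on the basis $e_j\wedge e_k$ that $\wedge^2 D$ is diagonal.

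For the second factor I need the classical identity $\det(\wedge^2 X) = (\det X)^{n-1}$. I would prove it first for diagonalizable $X$ with eigenvalues $\mu_i$. There $\det(\wedge^2 X)=\prod_{j<k}\mu_j\mu_k$, and each $\mu_i$ appears in exactly $n-1$ pairs, giving $(\prod_i\mu_i)^{n-1}$. I would then extend to all $X$ by density of diagonalizable complex matrices and continuity of both sides, both being polynomial in the entries. Alternatively, both sides are multiplicative in $X$, so it suffices to check elementary matrices.

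The only mildly delicate step is this last identity for non-diagonalizable $X$, and the density argument handles it. Combining the two factors yields the claimed formula $\Lambda(L)=(\det X)^{n-1}\prod_{j<k}(1+\lambda_j\lambda_k)$.
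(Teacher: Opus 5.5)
Your proof is correct and follows the paper's argument: factor $\wedge^2X+\wedge^2Y=\bigl(I+\wedge^2(YX^{-1})\bigr)\circ\wedge^2X$, take determinants, and read off the eigenvalues $1+\lambda_j\lambda_k$ from the symmetric matrix $YX^{-1}$. Your only addition is an explicit justification of $\det(\wedge^2X)=(\det X)^{n-1}$, which the paper uses without proof; your density argument for it is valid.
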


Thus, combined with Lemma~\ref{lem:detX>0}, $\Lambda$ can be completely expressed in terms of the eigenvalues~$\lambda_k$'s if the sign of $\det X$ is clear.

\begin{proof}
	By \eqref{Lambda-def} and the basic property of determinants, we have
	\begin{align*}
	\Lambda(L)
	= \det\pr{\pr{I + \wedge^2 YX^{-1}}\circ \wedge^2 X}
	& = \det\pr{\wedge^2X}
	\cdot \det\pr{I + \wedge^2 YX^{-1}}
	\end{align*}
	where $I$ is the identity operator on $\wedge^2\bb R^n.$
	The lemma then follows from the transformation law of eigenvalues mentioned in~\eqref{evalue-transform}.
\end{proof}

Besides the ${SO}(n)$-invariance, we will also use a second invariance of \(\Lambda\). 
Namely, \(\Lambda\) is invariant under the natural ${U}(1)$-subgroup of ${U}(n)$ generated by scalar unitary rotations. 
This is the circle direction associated with the Maslov class. 
The precise statement is recorded in the following lemma.

\begin{lemma}
	\label{lem:Lambda-rot-inv}
	For $n\ge 2,$ given $L\in{\rm Lag}^+(n)$ and $\varphi\in\bb R,$
	$\Lambda\pr{e^{-i\varphi}L}
	= \Lambda\pr{L}.$
\end{lemma}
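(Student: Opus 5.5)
Write $e^{-i\varphi}(X+iY)=\td X+i\td Y$ with $\td X=\cos\varphi\, X+\sin\varphi\, Y$ and $\td Y=-\sin\varphi\, X+\cos\varphi\, Y$, as in Lemma~\ref{lem:lambda-rotate}. This is a representative of $e^{-i\varphi}L$, so it suffices to show
\[
\det\pr{\wedge^2\td X+\wedge^2\td Y}=\det\pr{\wedge^2 X+\wedge^2 Y}.
\]
The direct route is an operator identity on decomposable bivectors. For $u,v\in\bb R^n$, expand
\[
\td Xu\wedge\td Xv+\td Yu\wedge\td Yv.
\]
The $\cos^2\varphi$ and $\sin^2\varphi$ terms combine into $Xu\wedge Xv+Yu\wedge Yv$. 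The cross terms have coefficient $\cos\varphi\sin\varphi$ and read
\[
Xu\wedge Yv+Yu\wedge Xv-Yu\wedge Xv-Xu\wedge Yv=0.
\]
Hence $\wedge^2\td X+\wedge^2\td Y=\wedge^2X+\wedge^2Y$ as linear maps on $\wedge^2\bb R^n$. By linearity this extends from decomposable bivectors to all of $\wedge^2\bb R^n$, and the equality of determinants follows at once. Equivalently, the operator $\wedge^2X+\wedge^2Y$ is the real part of the complex bilinear map $u\wedge v\mapsto Uu\wedge \ovl U v$, and the phases $e^{-i\varphi}$ and $e^{i\varphi}$ cancel there.

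As a cross-check I would also verify the graphical version, to confirm consistency with Lemma~\ref{lem:Lambda-in-terms-of-lambda}. By density, assume both $X$ and $\td X$ are invertible. Then $\td X=X(\cos\varphi+\sin\varphi\,YX^{-1})$, so
\[
\det\td X=\det X\prod_k(\cos\varphi+\sin\varphi\,\lambda_k).
\]
Raising this to the power $n-1$ and multiplying by $\prod_{j<k}(1+\td\lambda_j\td\lambda_k)$, the transformation law \eqref{product-transf} gives a denominator in which each index $k$ appears exactly $n-1$ times. That matches the factor $\prod_k(\cos\varphi+\sin\varphi\lambda_k)^{n-1}$ exactly. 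One detail needs care here: $YX^{-1}$ is symmetric, so $\cos\varphi+\sin\varphi\,YX^{-1}$ has eigenvalues $\cos\varphi+\sin\varphi\lambda_k$, which justifies the determinant formula above.

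I expect no real obstacle. The only point requiring attention is the sign bookkeeping in the cross terms, in particular keeping track of the orientation/order in the wedge products. The operator identity avoids any chart or invertibility issues, so I would present it as the proof.
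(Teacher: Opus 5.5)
Your proof is correct and is the paper's own argument: both prove the operator identity $\wedge^2 X_\varphi+\wedge^2 Y_\varphi=\wedge^2 X+\wedge^2 Y$ on $\wedge^2\mathbb{R}^n$ by expanding on decomposable bivectors, observing that the $\cos\varphi\sin\varphi$ cross terms cancel, and then taking determinants. One small slip in your optional graphical cross-check: the factorization should read $\td X=(\cos\varphi+\sin\varphi\,YX^{-1})X$ rather than $X(\cos\varphi+\sin\varphi\,YX^{-1})$, although the determinant formula you derive from it is still correct.
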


\begin{proof} 
	Suppose $L$ is represented by $X+iY$, and hence $e^{-i\varphi}L$ is represented by $e^{-i\varphi}(X+iY)=X_\varphi+iY_\varphi$ where $X_\varphi$ and $Y_\varphi$ are defined in \eqref{X-varphi}.
	The conclusion of the lemma follows from the stronger relation that
	\begin{align}
	\label{wedge-rotation-invariance}
	\wedge^2 X_\varphi + \wedge^2 Y_\varphi=\wedge^2 X + \wedge^2 Y
	\end{align}
	as operatros on $\wedge^2\bb R^n.$
	To verify this, consider $u,v\in\bb R^n.$ 
	A direct calculation leads to
	\begin{align*}
	&\pr{\wedge^2 X_\varphi + \wedge^2 Y_\varphi}(u\wedge v)\\
	= & \pr{\cos\varphi\cdot X + \sin\varphi\cdot Y}u \wedge \pr{\cos\varphi\cdot X + \sin\varphi\cdot Y} v\\
	&+ \pr{-\sin\varphi\cdot X + \cos\varphi\cdot Y}u \wedge \pr{-\sin\varphi\cdot X + \cos\varphi\cdot Y}v\\
	= & \cos^2 \varphi \cdot Xu\wedge Xv
	+ \sin\varphi\cos\varphi\cdot Yu\wedge Xv
	+ \cos\varphi\sin\varphi\cdot Xu\wedge Yv
	+ \sin^2\varphi\cdot Yu\wedge Yv\\
	& + \sin^2 \varphi \cdot Xu\wedge Xv
	- \cos\varphi\sin\varphi\cdot Yu\wedge Xv
	- \sin\varphi\cos\varphi\cdot Xu\wedge Yv
	+ \cos^2\varphi\cdot Yu\wedge Yv\\
	= & Xu\wedge Xv + Yu\wedge Yv
	= \pr{\wedge^2 X + \wedge^2 Y}(u\wedge v).
	\end{align*}
	This proves \eqref{wedge-rotation-invariance} and implies the desired invariance.
\end{proof}

When $n=2,$ because the dimension of $\wedge^2\bb R^2$ is one, we can further simplify
\begin{align}
\label{Lambda-two-dim}
\Lambda\pr{L}= \det X + \det Y.
\end{align}
This is the same as the one defined in \eqref{Lambda_n=2}.

\subsection{Almost isoclinic Lagrangians} 

We now define the almost isoclinic region $\mathcal{AI}(n)\sbst{\rm Lag}^+(n).$ 
The definition of $\mathcal{AI}(n)$ depends on a fixed choice of the base $n$-plane $L_B\simeq\bb R^n.$ 
Such a choice determines a class in ${\rm Lag}^+(n)/{U}(1)\simeq {SU}(n)/{SO}(n).$

\begin{definition}\label{def:AI}
	Let $L\in{\rm Lag}^+(n)$ and fix \(\varphi \in \mathbb{R}\). 
	We choose a representative $X+iY\in U(n)$ of \(L\) and let \(X_{\varphi}\) and \(Y_{\varphi}\) denote the corresponding $\varphi$-rotated matrices defined in \eqref{X-varphi}. 
	Then $L\in\mathcal{AI}_{\varphi}(n)$ if the following conditions hold.
	\begin{enumerate}
		\item \label{AI-1}
		\(\det X_{\varphi}>0\);
		\item \label{AI-2}
		$1+\lambda_j\lambda_k>0$ for any $j$ and $k$ where $\lambda_j$'s are the eigenvalues of $Y_\varphi X_\varphi^{-1}.$
	\end{enumerate}
	We define
	\begin{align*}
	\mathcal{AI}(n)
	:=
	\bigcup_{\varphi \in [0,2\pi]}
	\mathcal{AI}_{\varphi}(n).
	\end{align*}
	Elements of \(\mathcal{AI}(n)\) are called {\bf almost isoclinic $n$-planes}.
\end{definition}

\begin{remark}
\label{rmk:general-matrices-for-AI}
    When verifying the conditions in Definition~\ref{def:AI}, one may work with $X+iY$ representing an arbitrary oriented basis, not necessarily an  orthonormal one.
    That is, it is enough to consider $X$ and $Y$ satisfying \eqref{Lag} alone. 
    A way of evaluating $\Lambda$ using an oriented basis is recorded in Lemma~\ref{Lambda_general}.
\end{remark}

If we let $\theta_k:=\arctan\lambda_k,$ then condition \eqref{AI-2} in the definition means
\begin{align*}
\cos(\theta_j-\theta_k)
= \frac{1+\lambda_j\lambda_k}{\sqrt{(1+\lambda_j^2)(1+\lambda_k^2)}}
>0,
\end{align*}
that is, $|\theta_j-\theta_k|<\pi/2,$ cf. \eqref{2d-theta}.
When $n=2,$ the region $\mathcal{AI}(2)$ is the same as the invariant regions defined in Section~\ref{sec:2D-AI}.

\begin{lemma}
	$\mathcal{AI}(2) 
	= \set{L\in{\rm Lag}^+(2): \Lambda(L)>0}.$
\end{lemma}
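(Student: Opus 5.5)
We prove the two inclusions separately. The main tools are Lemma~\ref{lem:Lambda-in-terms-of-lambda} (with $n=2$), Lemma~\ref{lem:detX>0}, and the $U(1)$-invariance of $\Lambda$ from Lemma~\ref{lem:Lambda-rot-inv}.

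For $\mathcal{AI}(2)\sbst\set{\Lambda>0}$, let $L\in\mathcal{AI}_\varphi(2)$. By Lemma~\ref{lem:Lambda-rot-inv} we have $\Lambda(L)=\Lambda(e^{-i\varphi}L)$, and $e^{-i\varphi}L$ is represented by $X_\varphi+iY_\varphi$. Since $X_\varphi$ is invertible, Lemma~\ref{lem:Lambda-in-terms-of-lambda} with $n=2$ gives
\[
\Lambda(L)=\det X_\varphi\,(1+\lambda_1\lambda_2).
\]
Both factors are positive by conditions \eqref{AI-1} and \eqref{AI-2}, so $\Lambda(L)>0$.

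For the reverse inclusion, assume $\Lambda(L)>0$. We must find a $\varphi$ with $\det X_\varphi>0$ and $1+\lambda_1\lambda_2>0$. By the formula above, whenever $X_\varphi$ is invertible the sign of $1+\lambda_1\lambda_2$ equals the sign of $\det X_\varphi$. So it suffices to find a single $\varphi$ with $\det X_\varphi>0$.

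To find it, consider the function $f(\varphi)=\det X_\varphi$. Write $U=X+iY$ and use $X_\varphi=\operatorname{Re}(e^{-i\varphi}U)$. Diagonalize the unitary matrix: its eigenvalues are $e^{i\alpha_1},e^{i\alpha_2}$. Because $U$ is unitary and symmetric up to $SO(2)$-conjugation, one can choose the representative so that $U^TU$ is diagonal, which reduces the problem to a concrete trigonometric computation. A cleaner route is to compute directly for $n=2$:
\[
\det X_\varphi=\cos^2\varphi\det X+\sin\varphi\cos\varphi\,(\text{mixed term})+\sin^2\varphi\det Y .
\]
This is a trigonometric polynomial of degree two, $f(\varphi)=a+b\cos2\varphi+c\sin2\varphi$. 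Its mean value over a period is
\[
a=\tfrac12(\det X+\det Y)=\tfrac12\Lambda(L)>0,
\]
using \eqref{Lambda-two-dim}. A function with positive average must be positive somewhere, so some $\varphi$ satisfies $\det X_\varphi>0$, and then $L\in\mathcal{AI}_\varphi(2)$.

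The main step to verify is the identity $\det X_\varphi+\det Y_\varphi=\det X+\det Y$, which shows the constant term is $\tfrac12\Lambda$. This is exactly \eqref{wedge-rotation-invariance} in dimension $n=2$, where $\wedge^2$ of a $2\times2$ matrix is its determinant. Since $Y_\varphi=X_{\varphi+\pi/2}$, we get $f(\varphi)+f(\varphi+\pi/2)=\Lambda(L)>0$. Hence one of $f(\varphi)$ and $f(\varphi+\pi/2)$ is positive, which avoids any averaging argument.
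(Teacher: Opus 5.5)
Your proof is correct and is essentially the paper's: evaluating your identity $f(\varphi)+f(\varphi+\pi/2)=\Lambda(L)$ at $\varphi=0$ gives $\det X+\det Y=\Lambda(L)>0$, so $\det X>0$ or $\det Y>0$, and the paper likewise takes $\varphi=0$ or $\varphi=\pi/2$ and reads off $1+\lambda_1\lambda_2>0$ from $\Lambda=\det X_\varphi\,(1+\lambda_1\lambda_2)$. The trigonometric-averaging detour and the aside about diagonalizing $U^TU$ are not needed and can be dropped.
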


\begin{proof}
	We prove $\set{L\in{\rm Lag}^+(2): \Lambda(L)>0}\sbst \mathcal{AI}(2)$ as the other direction is clear.
	
	By $\Lambda>0$ and \eqref{Lambda-two-dim}, we know $\det X>0$ or $\det Y>0.$
	First, we assume $\det X>0.$
	In this case, we choose $\varphi=0.$ 
	Lemma~\ref{lem:Lambda-in-terms-of-lambda} then implies
	\begin{align*}
	0<\Lambda(L)
	= \det X \cdot \pr{1+\lambda_1\lambda_2}.
	\end{align*}
	Thus, it follows that  $1+\lambda_1\lambda_2>0,$
	so the conditions hold for $\varphi=0.$
	
	Next, we assume $\det Y>0.$
	In this case, we choose $\varphi=\pi/2,$ which implies
	$X_\varphi=Y$ and $Y_\varphi = -X$
	by \eqref{X-varphi}.
	Lemmas~\ref{lem:Lambda-in-terms-of-lambda} and~\ref{lem:Lambda-rot-inv} imply
	\begin{align*}
	0<\Lambda(L)
	= \Lambda\pr{e^{-i\varphi}L}
	& = \det X_\varphi \cdot \pr{1+\lambda_1\lambda_2}
	= \det Y \cdot \pr{1+\lambda_1\lambda_2}.
	\end{align*}
	This implies $1+\lambda_1\lambda_2>0,$ so the conditions hold for $\varphi=\pi/2.$
\end{proof}

The conditions in Definition~\ref{def:AI} are a natural generalization of those arising in the graphical setting. 
However, the key condition \eqref{AI-2} is not invariant under rotations. 
We point out that the region $\mathcal{AI}(n)$ can equivalently be characterized by a rotationally invariant condition, which appears more natural from a geometric perspective.

\begin{prop}
	\label{prop:AI-characterization}
	Let $n\ge 3.$
	Suppose $L\in{\rm Lag}^+(n)$ and  $X+iY$ is a representative of $L.$
	Then $L\in\mathcal{AI}(n)$ if and only if there exists $\psi\in\bb R$ such that the following conditions hold.
	\begin{enumerate}
		\item \label{AI-n-1}
		\(\Lambda(L)>0\);
		\item \label{AI-n-2}
		\(\det X_{\psi}>0\);
		\item \label{AI-n-3}
		$(1+\lambda_j\lambda_k)(1+\lambda_k\lambda_\ell)
		(1+\lambda_\ell\lambda_j)>0$
		for any $j$, $k,$ and $\ell$ where $\lambda_j$'s are the eigenvalues of $Y_\psi X_\psi^{-1}.$
	\end{enumerate}
	In particular, 
	$\mathcal{AI}(3) = \set{L\in{\rm Lag}^+(3): \Lambda(L)>0}.$
\end{prop}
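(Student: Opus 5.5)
We prove the two implications separately. Throughout we use angles $\theta_j=\arctan\lambda_j\in(-\tfrac{\pi}{2},\tfrac{\pi}{2})$. With these, $1+\lambda_j\lambda_k$ has the sign of $\cos(\theta_j-\theta_k)$, and rotating by $e^{-i\varphi}$ replaces $\theta_j$ by $\theta_j-\varphi$ modulo $\pi$; this is the content of Lemma~\ref{lem:lambda-rotate}, since $\widetilde\lambda_k=\tan(\theta_k-\varphi)$.

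\emph{Forward direction.} Suppose $L\in\mathcal{AI}_\varphi(n)$ and take $\psi=\varphi$. Condition \eqref{AI-n-2} holds by assumption. Every factor $1+\lambda_j\lambda_k$ is positive, so \eqref{AI-n-3} holds. For \eqref{AI-n-1}, Lemmas~\ref{lem:Lambda-rot-inv} and~\ref{lem:Lambda-in-terms-of-lambda} give
\[
\Lambda(L)=\Lambda(e^{-i\varphi}L)=(\det X_\varphi)^{n-1}\prod_{j<k}(1+\lambda_j\lambda_k)>0 .
\]

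\emph{Converse: sign structure.} Fix $\psi$ as in the statement and let $\lambda_j,\theta_j$ come from $Y_\psi X_\psi^{-1}$. By the same formula with $\varphi=\psi$, $\Lambda(L)>0$ and $\det X_\psi>0$ force $\prod_{j<k}(1+\lambda_j\lambda_k)>0$. In particular no factor vanishes, so $|\theta_j-\theta_k|\neq\pi/2$ for all $j\ne k$.

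Call $j\sim k$ if $j=k$ or $1+\lambda_j\lambda_k>0$. The triple condition \eqref{AI-n-3} says every triangle has an even number of negative edges. Hence $\sim$ is transitive, so it is an equivalence relation. It has at most two classes: if $j\not\sim k$ and $j\not\sim\ell$, then parity forces $k\sim\ell$. Call the classes $A$ and $B$, with $B$ possibly empty.

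The key step, which I expect to be the main obstacle, is showing that one class lies entirely below the other in the $\theta$-order. Suppose instead that $\theta_a<\theta_b<\theta_{a'}$ with $a,a'\in A$ and $b\in B$. Then $\theta_b-\theta_a>\pi/2$ and $\theta_{a'}-\theta_b>\pi/2$, so $\theta_{a'}-\theta_a>\pi$. This is impossible since all angles lie in $(-\pi/2,\pi/2)$. The same argument excludes $\theta_b<\theta_a<\theta_{b'}$ with $b,b'\in B$. So, after relabeling, every $\theta_b$ with $b\in B$ is below every $\theta_a$ with $a\in A$, with each cross difference exceeding $\pi/2$.

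\emph{Converse: rotating into a good chart.} Set $\theta'_b=\theta_b+\pi$ for $b\in B$ and $\theta'_a=\theta_a$ for $a\in A$. Then:
\begin{itemize}
\item within a class, differences are less than $\pi/2$;
\item across classes, $\theta'_b-\theta_a=\pi-(\theta_a-\theta_b)\in(0,\pi/2)$.
\end{itemize}
So all $\theta'_j$ lie in an open window of length less than $\pi/2$. Let $c$ be the midpoint of this window and put $\varphi=\psi+c$.

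The eigenvalues for $e^{-i\varphi}L$ are then $\tan(\theta'_j-c)$. This is valid because $\theta'_j-c\in(-\pi/4,\pi/4)$, so $\cos c+\sin c\,\lambda_j\neq0$ and $X_\varphi$ is invertible. Consequently $1+\widetilde\lambda_j\widetilde\lambda_k$ has the sign of $\cos(\theta'_j-\theta'_k)>0$, which is condition \eqref{AI-2} at angle $\varphi$.

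Finally, from
\[
\Lambda(L)=(\det X_\varphi)^{n-1}\prod_{j<k}(1+\widetilde\lambda_j\widetilde\lambda_k)>0
\]
we get $(\det X_\varphi)^{n-1}>0$. If $n$ is even this gives $\det X_\varphi>0$. If $n$ is odd and $\det X_\varphi<0$, replace $\varphi$ by $\varphi+\pi$: then $X_{\varphi+\pi}=-X_\varphi$, whose determinant changes sign, and the graph matrix is unchanged. Hence $L\in\mathcal{AI}_\varphi(n)$ for a suitable $\varphi$.

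\emph{The case $n=3$.} Given $\Lambda(L)>0$, Lemma~\ref{lem:graph-chart} provides $\psi$ with $X_\psi$ invertible. If $\det X_\psi<0$, replace $\psi$ by $\psi+\pi$, which flips the sign since $n=3$ is odd; so we may assume $\det X_\psi>0$.

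Then $\Lambda(L)=(\det X_\psi)^2(1+\lambda_1\lambda_2)(1+\lambda_2\lambda_3)(1+\lambda_3\lambda_1)>0$ gives \eqref{AI-n-3} for distinct indices. For repeated indices the product has the form $(1+\lambda_j^2)(1+\lambda_j\lambda_k)^2$, which is positive because no factor vanishes. This proves $\mathcal{AI}(3)=\{L\in{\rm Lag}^+(3):\Lambda(L)>0\}$.
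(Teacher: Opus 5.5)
Your proof is correct, and it reaches the conclusion by a somewhat different route from the paper.

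\textbf{Where you agree with the paper.} Your two equivalence classes under $\sim$ are exactly the paper's sets $\mathcal P=\{j: 1+\lambda_1\lambda_j>0\}$ and $\mathcal N$. Both come from the same parity observation that ${\rm sign}(1+\lambda_j\lambda_k)=s_js_k$.

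\textbf{Where you differ.} The two proofs split at the rotation step.
\begin{itemize}
\item The paper argues algebraically. Since $\lambda_j\lambda_k<-1$ across classes, the $\lambda$'s have a constant, nonzero sign on each class. It then rotates by the fixed angle $\pi/2$. There $\widetilde\lambda_j=-1/\lambda_j$, and $1+\widetilde\lambda_j\widetilde\lambda_k=(1+\lambda_j\lambda_k)/(\lambda_j\lambda_k)$ is positive by sign bookkeeping. The sign of the determinant is fixed by choosing between $\psi\pm\pi/2$ when $n$ is odd, or by $\Lambda>0$ when $n$ is even.
\item You work in the angle picture. You show the two classes are separated in the $\theta$-order, shift the lower class by $\pi$, and rotate to the midpoint of the resulting window.
\end{itemize}

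\textbf{What each buys.} The paper's argument is shorter and needs no ordering lemma, because the ordering is encoded in the same-sign fact. Yours is more geometric: it shows that the $\theta_j$ modulo $\pi$ lie on an arc of length less than $\pi/2$. It also gives a stronger normalization, namely a chart in which every new characteristic angle lies in $(-\pi/4,\pi/4)$. Equivalently, the graph matrix there has operator norm less than $1$.

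\textbf{Smaller points.}
\begin{itemize}
\item Your use of $\varphi\mapsto\varphi+\pi$ for odd $n$ is clean. It also avoids the small typo in the paper's choice between $\psi+\pi/2$ and $\psi-\pi/2$, where both cases are written with $\det Y_\psi>0$.
\item Your explicit verification of the $n=3$ statement is correct. The paper leaves that step implicit.
\end{itemize}
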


We emphasize that the angle $\psi$ in the proposition may differ from the angle $\varphi$ in Definition~\ref{def:AI}.
By Lemma~\ref{lem:lambda-rotate}, condition~\eqref{AI-n-3} is invariant under rotations that preserves condition~\eqref{AI-n-2}.
That is, if $L\in\mathcal O_{\varphi}\cap \mathcal O_{\td\varphi}$ with $\det X_{\varphi}$ and $\det X_{\td\varphi}$ positive, then condition \eqref{AI-n-3} is true for $\varphi$ if and only if it is true for $\td\varphi.$
Moreover, in the proof our main results (e.g., Theorem~\ref{thm:main-thm-log-concavity} and Proposition~\ref{prop:Hess-n-dim}), the seemingly weaker condition~\eqref{AI-n-3} in Proposition~\ref{prop:AI-characterization} appears naturally in the calculations.

\begin{proof}
	The ``only if'' direction is clear based on Lemmas~\ref{lem:detX>0}, \ref{lem:Lambda-in-terms-of-lambda}, and~\ref{lem:Lambda-rot-inv}.
	Thus, it remains to prove the ``if'' direction.
	Assume $\lambda_j$'s are the eigenvalues of $Y_\psi X_\psi^{-1}.$
	
	Let $q_{jk}:=1+\lambda_j\lambda_k$ and let
	$s_j:={\rm sign}(q_{1j})
	= {\rm sign} (1+\lambda_1\lambda_j)
	\in\set{\pm 1}.$
	Since $q_{jk}q_{1j}q_{1k}>0$ for any $j<k<\ell,$ it follows that
	$1 = {\rm sign}(q_{jk})\cdot s_j\cdot s_k.$
	This means
	${\rm sign}(q_{jk})=s_js_k$
	for any $j$ and $k.$

	Consider $\mathcal P
	:= \set{j: s_j=1}$
    and $\mathcal N
	:= \set{k: s_k=-1}.$
	If $\mathcal N=\emptyset,$ then we can simply take $\psi=\varphi.$
	Thus, in the rest of the proof, we assume $\mathcal N\neq\emptyset.$ 
	This means that $\lambda_j\lambda_k<-1<0$ for any $j\in\mathcal P$ and $k\in\mathcal N.$
	In particular, since $\mathcal N$ is non-empty, this implies that all $\lambda_j$'s in $\mathcal P$ have the same sign, and so do all $\lambda_k$'s in $\mathcal N.$
	We will show that in this case, onw of the angles $\psi \pm \pi/2$ will satisfy all the conditions in the proposition.
	
	First, we observe that $\lambda_\ell\neq 0$ for all $\ell.$
	In fact, if there existed $\ell$ such that $\lambda_\ell=0,$ then we could take $k\in\mathcal N$ and obtain
	${\rm sign}\pr{q_{1k}q_{k\ell}q_{1\ell}}
	= s_k\cdot 1\cdot 1<0,$
	contradicting condition~\eqref{AI-n-3} in the proposition.
	This proves the claim that $\lambda_\ell\neq 0$ for all $\ell.$
	In particular, this implies $Y_\psi X_\psi^{-1}$ is invertible, and hence $Y_\psi$ is invertible.
	
	We look at condition~\eqref{AI-2} for the angle $\psi+\pi/2.$
	Note that by~\eqref{X-varphi}, 
	$X_{\psi+\pi/2} 
	= Y_\psi,$
	which is invertible.
	Thus, let $\td\lambda_j$'s be the eigenvalues of $Y_{\psi+\pi/2} X_{\psi+\pi/2}^{-1}.$
	Lemma~\ref{lem:lambda-rotate} implies	
	\begin{align*} {\rm sign} \pr{1+\td\lambda_j\td\lambda_k}
	= {\rm sign} \pr{\frac{q_{jk}}{\lambda_j\lambda_k}}
	= {\rm sign} \pr{\frac{s_js_k}{\lambda_j\lambda_k}}.
	\end{align*}
	Fix two indices $j$ and $k.$
	If they both lie in either $\mathcal P$ or $\mathcal N,$ then $\lambda_j$ and $\lambda_k$ have the same sign.
	If $j\in\mathcal P$ and $k\in\mathcal N,$ then $\lambda_j$ and $\lambda_k$ have different signs.
    Thus, in both cases, $(s_js_k)/(\lambda_j\lambda_k)>0,$ so $1+\td\lambda_j\td\lambda_k
	>0.$

	It remains to check condition~\eqref{AI-1}.
	If $n$ is odd, then we choose $\varphi=\psi+\pi/2$ when $\det Y_\psi>0$ and we choose $\varphi=\psi-\pi/2$ when $\det Y_\psi>0.$
	This does not change condition~\eqref{AI-n-3} by Lemma~\ref{lem:lambda-rotate} again.
	If $n$ is even, Lemma~\ref{lem:Lambda-in-terms-of-lambda} implies
	\begin{align*}
	0<\Lambda(L)
	= \pr{\det X_{\psi+\pi/2}}^{n-1} \cdot \prod_{1\le j<k\le n} 
	\pr{1+\td\lambda_j \td\lambda_k}.
	\end{align*}
	Since $1+\td\lambda_j\td\lambda_k>0$ for all $j$ and $k$ and $n-1$ is odd, it follows that $\det X_{\psi+\pi/2}>0,$ so $\varphi=\psi+\pi/2$ satisfies all the conditions.
	This finishes the proof.
\end{proof}

We remark that conditions \eqref{AI-n-2} and \eqref{AI-n-3} do not imply \eqref{AI-n-1} in Proposition~\ref{prop:AI-characterization}.
We finish the section by defining almost isoclinic Lagrangian submanifolds.

\begin{definition}
	Let $M$ be an $n$-dimensional oriented Lagrangian submanifold in $\bb R^{2n}.$
	We say that $M$ is an {\bf almost isoclinic Lagrangian} if for any $p\in M,$ the oriented tangent space $T_pM$ is almost isoclinic, i.e. $T_pM\in \mathcal{AI}(n)$.
\end{definition}

\subsection{Another expression for  $\Lambda$}

We provide an alternative expression for the quantity $\Lambda$ in terms of the complex coordinates on $\mathbb{C}^n$.

Let $z_k$, $k=1,\dots,n$, denote the standard complex coordinates on $\mathbb{C}^n$.
Suppose $L \subset \mathbb{C}^n$ is an oriented Lagrangian subspace represented by an oriented orthonormal basis $\set{e_k}$. Such a basis is determined up to the action of ${SO}(n)$.
The $n\times n$ matrix
$\big[\,dz_k(e_\ell)\,\big]_{1\le k,\ell\le n}$
is an element of ${U}(n),$ and the $\binom{n}{2}\times \binom{n}{2}$ real matrix
$$\left[\, \operatorname{Re}\!\left(dz_k \wedge d\bar{z}_\ell\right)(e_p, e_q) \,\right]_{\substack{1\le k<\ell\le n \\ 1\le p<q\le n}}$$
provides a way to generalize the formula for $\Lambda$ in \eqref{Lambda_n=2}.

\begin{lemma}
   Let $L$ be an oriented Lagrangian subspace of $\mathbb{C}^n$. Then
    \begin{equation*}
    \Lambda(L)
    =
    \det
    \left[\, \operatorname{Re}\!\left(dz_k \wedge d\bar{z}_\ell\right)(e_p, e_q) \,\right]_{\substack{1\le k<\ell\le n \\ 1\le p<q\le n}}
    \end{equation*}
    is well-defined. 
    That is, it is independent of the choice of an oriented orthonormal basis $\set{e_k}$ of $L$.
\end{lemma}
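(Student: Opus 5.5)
Write $U=X+iY=[dz_k(e_\ell)]$, so $dz_k(e_p)=U_{kp}$. The plan is to show that the $\binom n2\times\binom n2$ matrix in the statement is exactly the matrix of $\wedge^2X+\wedge^2Y$ in the standard basis $\set{\epsilon_p\wedge\epsilon_q}_{p<q}$ of $\wedge^2\bb R^n$. The lemma then follows from Definition~\ref{Lambda}, and well-definedness is inherited from the $SO(n)$-invariance already noted after \eqref{Lambda-def}.

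The first step is to compute the entries. With the convention $(\alpha\wedge\beta)(u,v)=\alpha(u)\beta(v)-\alpha(v)\beta(u)$ we get
\begin{align*}
(dz_k\wedge d\bar z_\ell)(e_p,e_q)=U_{kp}\ovl{U_{\ell q}}-U_{kq}\ovl{U_{\ell p}}.
\end{align*}
Taking real parts and using $\operatorname{Re}(a\bar b)=\operatorname{Re}a\operatorname{Re}b+\operatorname{Im}a\operatorname{Im}b$, the entry becomes
\begin{align*}
(X_{kp}X_{\ell q}-X_{kq}X_{\ell p})+(Y_{kp}Y_{\ell q}-Y_{kq}Y_{\ell p}).
\end{align*}

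The second step is to recognize these two terms. They are precisely the $2\times2$ minors of $X$ and of $Y$ in rows $k,\ell$ and columns $p,q$. In other words, they are the coefficients of $\epsilon_k\wedge\epsilon_\ell$ in $\wedge^2X(\epsilon_p\wedge\epsilon_q)=X\epsilon_p\wedge X\epsilon_q$ and in the corresponding expression for $Y$. So the matrix in the statement is the matrix of $\wedge^2X+\wedge^2Y$, and its determinant is $\Lambda(L)$.

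For independence of the basis, a change of oriented orthonormal basis replaces $U$ by $UR$ with $R\in SO(n)$. This replaces $\wedge^2X+\wedge^2Y$ by $(\wedge^2X+\wedge^2Y)\circ\wedge^2R$, and $\det\wedge^2R=(\det R)^{n-1}=1$. Hence the determinant is unchanged.

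The only point needing care is bookkeeping of conventions: the ordering of index pairs and the normalization of the wedge product. Using the same ordering for rows $(k,\ell)$ and columns $(p,q)$ makes any permutation cancel, and a different wedge normalization would only rescale the whole matrix by a constant. For the determinant to match $\Lambda$ exactly, I will fix the determinant convention above, which also reproduces \eqref{Lambda_n=2} when $n=2$.
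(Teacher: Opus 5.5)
Your proposal is correct and follows the same approach as the paper, which says only that the lemma "follows from the definition of $\Lambda$." Your proof supplies the missing details: the entrywise computation showing the matrix is that of $\wedge^2X+\wedge^2Y$ in the basis $\{\epsilon_p\wedge\epsilon_q\}_{p<q}$, and basis independence via $\det\wedge^2R=(\det R)^{n-1}=1$ for $R\in SO(n)$, matching the invariance noted after the definition of $\Lambda$.
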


The lemma follows from the definition of $\Lambda.$
As a comparison, the Lagrangian angle of $L$ is defined to be the argument of  
$\pr{dz_1\wedge \cdots \wedge dz_n}(e_1, \cdots, e_n),$
which is also independent of the choice of an oriented orthonormal basis.

\section{\bf Log-concavity of $\Lambda$ and its consequence}
\label{sec:log-concavity}

In this section, we introduce the Grassmannian metric and explain the log-concavity property of the function $\Lambda$ on $\mathcal{AI}(n).$
This result may be of independent interest. 
For our purposes, we will use it to estimate the Laplacian $\D$ of $\log\Lambda$ on a minimal Lagrangian; see Theorem~\ref{thm:Lambda-elliptic-equation}.
This estimate will be the main ingredient for the proof of the Bernstein-type theorems in Section~\ref{sec:Bernstein}.

\subsection{Hessians of $\log\Lambda$ on ${\rm Lag}^+(n)$}

We first introduce the Grassmannian metric, with respect to which we will evaluate the Hessian of $\log \Lambda$ on the almost isoclinic region.
In this section, we will use $L_0$ to denote a rotation of a fixed base Lagrangian $n$-plane $L_B\simeq\bb R^n.$
That is, each $L_0$ will represent $e^{i\varphi}\bb R^n$ and we will let $\mathcal O_{L_0}:=\mathcal O_\varphi$ with this assumption understood.

Let $G(n,2n)$ be the Grassmannian of $n$-dimensional linear subspaces of $\bb R^{2n}.$
We endow $G(n,2n)$ with the canonical Grassmannian metric.
That is, given $L\in G(n,2n),$ we identify 
$T_LG(n,2n)\simeq {\rm Hom}\pr{L,L^\perp}$
on which the metric is defined by
\begin{align}\label{Gr-metric}
\pair{S_1,S_2}_{\rm Gr}
:=\tr\, S_1^TS_2
\end{align}
for any two tangent vectors $S_1$ and $S_2.$
The Grassmannian metric induces a Riemannian metric on~${\rm Lag}^+(n).$

When $L\in{\rm Lag}^+(n),$ we identify $L^\perp$ with $JL,$ where $J$ is the standard complex structure on $\bb C^n\simeq\bb R^{2n}.$
The tangent space of ${\rm Lag}^+(n)$ is the space of self-adjoint elements in ${\rm Hom}(L,JL).$
We fix $L_0\in {\rm Lag}^+(n)$ and an oriented orthonormal basis of it. 
If we consider the graph chart $\mathcal O_{L_0}$ with respect to $L_0,$ any $L\in\mathcal O_{L_0}$ can be represented by a symmetric matrix $S$ if one views $L=L_S=\set{x+JSx: x\in L_0}.$
With respect to the orthonormal basis of $L_0$, a tangent vector $\alpha\in T_{L_S}G(n,2n)$ is given by a symmetric matrix $A$ via a variation $S(t)=S+tA.$
Hence, at a point $L_S\in{\rm Lag}^+(n),$ one can locally evaluate the differential and the Hessian of a function on ${\rm Lag}^+(n)$ by symmetric matrices $S$ and $A.$

Theorem~\ref{thm:main-thm-log-concavity} is stated with these identifications understood.
Indeed, we derive an explicit formula of $\operatorname{Hess}\big(\log \Lambda(S)\big)(A, A).$  
In the following proposition, we work on a graph chart with respect to which the conditions in Definition~\ref{def:AI} hold.

\begin{prop}
	\label{prop:Hess-n-dim}
	Suppose $L\in \mathcal{AI}(n),$ and suppose in a graph chart based on $L_0,$ $L$ is given by the graph of a symmetric matrix $S.$
	If we choose an orthonormal basis in which $S={\rm diag}(\lambda_1,\cdots,\lambda_n)$, then given any symmetric $A=\pr{a_{jk}}_{j,k}$ in the same basis, we have
	\begin{align*}
	&\operatorname{Hess}\big(\log \Lambda(S)\big)(A, A)\\
	= & 
	-\sum_{1\le j<k\le n} \pr{\frac 1{(1+\lambda_j\lambda_k)^2} 
		\pr{\sqrt{\frac{1+\lambda_k^2}{1+\lambda_j^2}}a_{jj} - \sqrt{\frac{1+\lambda_j^2}{1+\lambda_k^2}}a_{kk}}^2
		+ \frac{4}{\pr{1+\lambda_j^2}\pr{1+\lambda_k^2}} a_{jk}^2}\\
	& - \sum_{1\le j<k\le n} 
	\sum_{\substack{1\le m\le n\\ m\neq j,k}} 
	\frac{2(1+\lambda_j\lambda_k)(1+\lambda_m^2)}
	{(1+\lambda_j^2)(1+\lambda_k^2)(1+\lambda_j\lambda_m)(1+\lambda_k\lambda_m)}
	a_{jk}^2.
	\end{align*}
\end{prop}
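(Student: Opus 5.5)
The plan is to reduce the computation to the base point of a graph chart, where the Grassmannian metric is flat to first order. There the Riemannian Hessian becomes an ordinary second derivative in the chart.

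\emph{Reduction to the base point.} Write $\lambda_k=\tan\theta_k$ with $|\theta_k|<\pi/2$, and let $g:=\mathrm{diag}(e^{i\theta_1},\dots,e^{i\theta_n})\in U(n)$, acting in the fixed orthonormal basis of $L_0$. Then $L=L_S$ is the image under $g$ of the base plane $L_0$. Since $U(n)$ acts isometrically on ${\rm Lag}^+(n)$, we have
\[
\operatorname{Hess}(\log\Lambda)_{L}(A,A)=\operatorname{Hess}(\log F)_{L_0}(A',A'),\qquad F:=\Lambda\circ g,
\]
where $A'$ is the pullback of the tangent vector. In the graph chart over $L_0$, the metric reads $\operatorname{tr}\big((I+T^2)^{-1}B(I+T^2)^{-1}B\big)$. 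Its first derivatives vanish at $T=0$, so the Christoffel symbols vanish there. Hence the Hessian at $L_0$ is just $\frac{d^2}{dt^2}\big|_{t=0}\log F(tA')$.

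\emph{Identifying $A'$.} I will differentiate $g\cdot L_{tA'}$ and compare it with the variation $S+tA$ in the original chart. Because $dS$ at $S$ corresponds to $(I+S^2)^{1/2}\,dT\,(I+S^2)^{1/2}$ in the diagonal basis, I expect
\[
a'_{jk}=\frac{a_{jk}}{\sqrt{(1+\lambda_j^2)(1+\lambda_k^2)}}.
\]
This accounts for the factors $\sqrt{(1+\lambda_k^2)/(1+\lambda_j^2)}$ and $1/((1+\lambda_j^2)(1+\lambda_k^2))$ in the stated formula.

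\emph{Expanding $\log F$.} The plane $g L_{tA'}$ is spanned by the columns of $X+iY$ with
\[
X=C-t\,\Sigma A',\qquad Y=\Sigma+t\,CA',
\]
where $C=\cos\Theta$ and $\Sigma=\sin\Theta$. This basis is not orthonormal, so, using the normalization $(I+t^2A'^2)^{-1/2}$ and Remark~\ref{rmk:general-matrices-for-AI}, I get
\[
\log F(tA')=\log\det\big(\wedge^2X+\wedge^2Y\big)-\tfrac{n-1}{2}\log\det(I+t^2A'^2).
\]
The second term contributes $-(n-1)\operatorname{tr}A'^2$. For the first term, I write $\wedge^2X+\wedge^2Y=D+tM_1+t^2M_2$ in the basis $e_j\wedge e_k$. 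Here $D=\mathrm{diag}(\cos(\theta_j-\theta_k))$, which is positive on $\mathcal{AI}$; this is exactly where the almost isoclinic condition enters. Then
\[
\frac{d^2}{dt^2}\Big|_{t=0}\log\det\big(D+tM_1+t^2M_2\big)=2\operatorname{tr}(D^{-1}M_2)-\operatorname{tr}(D^{-1}M_1D^{-1}M_1).
\]
The entries follow from $(Xu\wedge Xv+Yu\wedge Yv)$, using the identities $\cos\theta_j\sin\theta_k-\sin\theta_j\cos\theta_k=\sin(\theta_k-\theta_j)$ and similar ones. The diagonal entries of $M_1$ involve $\sin(\theta_k-\theta_j)(a'_{jj}-a'_{kk})$-type combinations. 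The off-diagonal entries connect $e_j\wedge e_k$ to $e_j\wedge e_m$ with coefficient proportional to $a'_{km}$ times a cosine of an angle difference.

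\emph{Main obstacle: assembling and regrouping.} The last step is to collect terms and rewrite everything in the $\lambda$-variables via $\cos(\theta_j-\theta_k)=(1+\lambda_j\lambda_k)/\sqrt{(1+\lambda_j^2)(1+\lambda_k^2)}$.
\begin{itemize}
\item The diagonal $a'_{jj}$ terms should combine into $-\frac{1}{(1+\lambda_j\lambda_k)^2}(\cdots)^2$.
\item The $a_{jk}^2$ terms come from three sources: $-(n-1)\operatorname{tr}A'^2$, $\operatorname{tr}(D^{-1}M_2)$, and the products of off-diagonal entries in $\operatorname{tr}(D^{-1}M_1D^{-1}M_1)$, which carry the weights $1/(\cos(\theta_j-\theta_m)\cos(\theta_k-\theta_m))$.
\end{itemize}
I expect this regrouping to be the hardest part. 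I would first verify the case $n=2$, where there is no index $m$. Then I would handle a single triple $(j,k,m)$, showing that the coefficient of $a_{jk}^2$ coming from the index $m$ equals
\[
\frac{2(1+\lambda_j\lambda_k)(1+\lambda_m^2)}{(1+\lambda_j^2)(1+\lambda_k^2)(1+\lambda_j\lambda_m)(1+\lambda_k\lambda_m)},
\]
using the trigonometric identity $\cos(\theta_j-\theta_m)\cos(\theta_k-\theta_m)$ against $\cos(\theta_j-\theta_k)$.
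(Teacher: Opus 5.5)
Your route is correct, and it differs genuinely from the paper's.

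The paper stays at a general point $S$ of the chart. It computes the Levi-Civita connection $\nabla_AB=-ASGB-BSGA$ and writes $\log\Lambda=\log\det(I+\wedge^2S)-\frac{n-1}{2}\log\det(I+S^2)$. This gives four terms $T_1,\dots,T_4$, where $T_3$ is the Christoffel correction, followed by a long $T_k'/T_k''$ index bookkeeping.

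You instead use the isometry $g=\mathrm{diag}(e^{i\theta_k})$ to move $L_S$ to the base of a chart, where the metric has vanishing first derivatives. So no connection term appears. Your three pieces $2\operatorname{tr}(D^{-1}M_2)$, $-\operatorname{tr}(D^{-1}M_1D^{-1}M_1)$ and $-(n-1)\operatorname{tr}A'^2$ play the roles of $T_1$, $T_2$, $T_4$. Your $A'=(I+S^2)^{-1/2}A(I+S^2)^{-1/2}$ and $X^TX+Y^TY=I+t^2A'^2$ are right.

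What each route buys:
\begin{itemize}
\item Yours produces the answer directly in the form of Remark~\ref{rmk:Hess-different-basis}, with coefficients $\sec^2(\theta_j-\theta_k)$ and $\cos(\theta_j-\theta_k)/(\cos(\theta_j-\theta_m)\cos(\theta_k-\theta_m))$. This is the form used in Theorem~\ref{thm:Lambda-elliptic-equation}, and the sign on $\mathcal{AI}(n)$ is transparent.
\item The paper's route gives, in exchange, chart formulas at arbitrary $S$ for the connection and for the Hessians of $\log\det(I+S^2)$ and $\log\det(I+\wedge^2 S)$ separately.
\end{itemize}

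One correction to your sketch: the off-diagonal entries of $M_1$ involve sines, not cosines. From $X=C-t\Sigma A'$ and $Y=\Sigma+tCA'$ one gets
\[
M_1(e_j\wedge e_k)=\sum_m a'_{mk}\sin(\theta_j-\theta_m)\,e_j\wedge e_m+\sum_m a'_{mj}\sin(\theta_k-\theta_m)\,e_m\wedge e_k .
\]
Also, $M_1$ is not symmetric: the entry from $e_j\wedge e_m$ back to $e_j\wedge e_k$ is $a'_{km}\sin(\theta_j-\theta_k)$. Hence the common index $m$ contributes $-2\tan(\theta_m-\theta_j)\tan(\theta_m-\theta_k)\,a'^2_{jk}$ to $-\operatorname{tr}(D^{-1}M_1D^{-1}M_1)$.

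The diagonal entries of $M_2$ are $\cos(\theta_j-\theta_k)(a'_{jj}a'_{kk}-a'^2_{jk})$, so $2\operatorname{tr}(D^{-1}M_2)$ contributes $-2a'^2_{jk}$. The term $-(n-1)\operatorname{tr}A'^2$ contributes $-2(n-1)a'^2_{jk}=-2a'^2_{jk}-2(n-2)a'^2_{jk}$.

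So the step you expected to be hardest reduces to the identity $1+\tan\alpha\tan\beta=\cos(\alpha-\beta)/(\cos\alpha\cos\beta)$. It gives the off-diagonal coefficient
\[
-4a'^2_{jk}-2\sum_{m\neq j,k}\frac{\cos(\theta_j-\theta_k)}{\cos(\theta_j-\theta_m)\cos(\theta_k-\theta_m)}\,a'^2_{jk}.
\]
The diagonal part collapses likewise to $-\sum_{j<k}\sec^2(\theta_j-\theta_k)(a'_{jj}-a'_{kk})^2$. Substituting $a'_{jk}=a_{jk}/\sqrt{(1+\lambda_j^2)(1+\lambda_k^2)}$ then yields the stated formula.
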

The right hand side is a quadratic form in $a_{jk}$, with coefficients depending on $S$.
In a graph chart in which the conditions in Definition~\ref{def:AI} hold for $S$, this quadratic form is manifestly non-positive.
The proof of Proposition~\ref{prop:Hess-n-dim} will be given in Section~\ref{sec:proof-Lambda}.

\begin{remark}
	\label{rmk:Hess-different-basis}
	The formula can be expressed in a simpler form.
	Following the notations in Proposition~\ref{prop:Hess-n-dim}, if another matrix $\td A=\pr{\td a_{jk}}_{j,k}$ is defined such that
	\begin{align*}
	a_{jk}
	= \td a_{jk} \sqrt{1+\lambda_j^2} \sqrt{1+\lambda_k^2},
	\end{align*}
	then the Hessian of $\log\Lambda$ at $S$ in the direction $(A,A)$ can be expressed as
	\begin{align*}
	-\sum_{1\le j<k\le n} 
	\pr{
	\frac{(1+\lambda_j^2)(1+\lambda_k^2)}{(1+\lambda_j\lambda_k)^2}\pr{\td a_{jj}-\td a_{kk}}^2
	+ 4\,\td a_{jk}^2
	}
	- \sum_{1\le j<k\le n} 
	\sum_{\substack{1\le m\le n\\ m\neq j,k}} 
	\frac{2(1+\lambda_j\lambda_k)(1+\lambda_m^2)}
	{(1+\lambda_j\lambda_m)(1+\lambda_k\lambda_m)}
	\td a_{jk}^2.
	\end{align*}
	In the next section, we will see that this matrix representation $\td A$ is another natural way to express an element in $T_{L_S}{\rm Lag}^+(n)$ with respect to another basis.
\end{remark}

\subsection{Consequences on almost isoclinic minimal Lagrangians}
\label{sec:Gauss-map}

The main application of the log-concavity property of the function $\Lambda$ is an estimate for $\D\log\Lambda$ on an almost isoclinic minimal Lagrangian.
In the following estimate, we let $A$ be the second fundamental form of a Lagrangian submanifold $\Sigma$ in $\bb R^{2n}$ and we let $h_{jk\ell}:=\pair{A(e_j,e_k),Je_{\ell}}$ be the component of $A.$
When the Lagrangian $\Sigma$ is oriented, we use the same notation $\Lambda$ to denote the function $\Lambda(p):=\Lambda\pr{T_p\Sigma}$ for $p\in\Sigma.$
As a result, if $\Sigma$ is almost isoclinic, then $\Lambda$ is a positive function on $\Sigma.$

\begin{thm}
	\label{thm:Lambda-elliptic-equation}
	Let $\Sigma$ be an almost isoclinic Lagrangian minimal submanifold in $\bb R^{2n}.$
	Then
	\begin{align*}
	\D\log\Lambda
	\le - 4
	\sum_{\substack{1\le \ell\le n\\ 1\le j<k\le n}} h_{jk\ell}^2.
	\end{align*}
	In particular, for any $n\ge 2,$ there exists $c_n>0$ such that
	\begin{align*}
	\D\log\Lambda \le -c_n|A|^2,
	\end{align*}
	and when $n=2,$ $c_2$ can be chosen to be $1.$
\end{thm}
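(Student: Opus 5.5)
We plan to compute $\D\log\Lambda$ via the chain rule for the Gauss map $\gamma:\Sigma\to{\rm Lag}^+(n)$, $p\mapsto T_p\Sigma$. For any smooth $f$ on the Grassmannian,
\[
\D(f\circ\gamma)=\sum_{\ell}\Hess f(d\gamma(e_\ell),d\gamma(e_\ell))+df(\tau(\gamma)),
\]
where $\tau(\gamma)$ is the tension field. By Ruh--Vilms, $\tau(\gamma)=0$ when $\Sigma$ is minimal, so the Gauss map is harmonic. Hence
\[
\D\log\Lambda=\sum_\ell \Hess(\log\Lambda)(d\gamma(e_\ell),d\gamma(e_\ell)),
\]
and the problem reduces to evaluating Proposition~\ref{prop:Hess-n-dim} along the directions $d\gamma(e_\ell)$.

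Fix $p\in\Sigma$. Since $T_p\Sigma\in\mathcal{AI}(n)$, we choose $\varphi$ as in Definition~\ref{def:AI} and write $T_p\Sigma$ as the graph of $S={\rm diag}(\lambda_1,\dots,\lambda_n)$ over $L_0=e^{i\varphi}L_B$ in a suitable orthonormal basis. We then identify $d\gamma(e_\ell)$ with a symmetric matrix $A^{(\ell)}$ in the chart. The natural frame for $T_p\Sigma$ is $\{(\epsilon_j+J\lambda_j\epsilon_j)/\sqrt{1+\lambda_j^2}\}$, with normal frame $J$ applied to it. Differentiating $L_{S(t)}$ shows that the graph variation $A$ corresponds to the element of ${\rm Hom}(L,JL)$ with matrix entries $a_{jk}/\sqrt{(1+\lambda_j^2)(1+\lambda_k^2)}=\td a_{jk}$ in these frames. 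This is the $\td A$ of Remark~\ref{rmk:Hess-different-basis}, and in adapted frames $e_j$ it gives $\td a^{(\ell)}_{jk}=h_{jk\ell}$. The main bookkeeping obstacle is checking this identification, in particular the normalization and the sign conventions between the Grassmannian metric and the graph chart.

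Applying Remark~\ref{rmk:Hess-different-basis} and summing over $\ell$, we drop the diagonal-difference term and the triple-sum term. Both are nonpositive: the coefficient
\[
\frac{(1+\lambda_j\lambda_k)(1+\lambda_m^2)}{(1+\lambda_j\lambda_m)(1+\lambda_k\lambda_m)}
\]
is positive because all $1+\lambda_j\lambda_k>0$. What remains is
\[
\D\log\Lambda\le-4\sum_\ell\sum_{j<k}h_{jk\ell}^2,
\]
which is the first claim.

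For the constant $c_n$, we use the full symmetry of $h_{jk\ell}$ for Lagrangians. Every component $h_{abc}$ with $a,b,c$ not all equal can be written as $h_{jk\ell}$ with $j<k$. Minimality gives $h_{aa\ell}$-traces zero, so
\[
h_{aaa}=-\sum_{b\ne a}h_{bba},
\]
and therefore $h_{aaa}^2\le(n-1)\sum_{b\ne a}h_{bba}^2$. Each $h_{bba}$ with $b\neq a$ equals some $h_{jk\ell}$ with $j<k$. Hence $|A|^2\le C_n\sum_\ell\sum_{j<k}h_{jk\ell}^2$, and this yields $c_n=4/C_n$.

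For $n=2$ we count directly. We have $|A|^2=\sum h_{abc}^2$ over ordered triples, and $h_{111}=-h_{221}$, $h_{222}=-h_{112}$. Writing $x=h_{112}$ and $y=h_{221}$ gives $|A|^2=4x^2+4y^2$. On the other side, $\sum_\ell h_{12\ell}^2=h_{121}^2+h_{122}^2=x^2+y^2$, so $4\sum_\ell h_{12\ell}^2=|A|^2$ and $c_2=1$.
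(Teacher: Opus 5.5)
Your proposal is correct and follows essentially the same route as the paper. Ruh--Vilms harmonicity of the Gauss map reduces $\Delta\log\Lambda$ to $\sum_\ell \Hess(\log\Lambda)(d\gamma(e_\ell),d\gamma(e_\ell))$, and \eqref{matrix-in-graph-chart} identifies $[d\gamma(e_\ell)]_{L_S}$ with $(h_{jk\ell})_{j,k}$. Remark~\ref{rmk:Hess-different-basis} then applies with $\tilde a_{jk}=h_{jk\ell}$, and the nonpositive terms are dropped using $1+\lambda_j\lambda_k>0$. The $c_n$ bound via $h_{aaa}=-\sum_{b\neq a}h_{bba}$ and the $n=2$ count $|A|^2=4(h_{112}^2+h_{221}^2)$ are exactly the paper's argument. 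Your bookkeeping $\tilde a_{jk}=a_{jk}/\sqrt{(1+\lambda_j^2)(1+\lambda_k^2)}$ is right, and sign conventions cannot matter because the Hessian is a quadratic form.
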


\begin{remark}
    The log-concavity property of $\Lambda$ also has an immediate consequence for a mean curvature flow of almost isoclinic Lagrangians.
    In fact, combining Proposition~\ref{prop:Hess-n-dim} and the main result in \cite{W03-MCF} leads to the estimate $(\bd_t-\D)\Lambda\ge c_n|A|^2$ along an almost isoclinic Lagrangian mean curvature flow.
    This estimate will be used in future work about regularity properties of such a flow.
\end{remark}

We will prove the theorem at the end of this section.
The main ingredients of the proof include the log-concavity of $\Lambda$ (Proposition~\ref{prop:Hess-n-dim}), the harmonicity of the Gauss map of a minimal immersion, and a correct identification of different bases of the tangent space of ${\rm Lag}^+(n).$

We first explain how the analytic property of the Gauss map helps us estimate the Laplacian of a function on a minimal submanifold.
Given an immersion $\Sigma^n\to\bb R^{2n},$ we let $g$ be the metric on $\Sigma$ induced by the Euclidean metric. 
Ruh--Vilms~\cite{RV} showed that the Gauss map $\gamma\colon \pr{\Sigma,g}\to G(n,2n)$ of the immersion is harmonic whenever the immersion is minimal.
That is,
\begin{align}\label{Minimal-Gauss}
\tr\, \n d\gamma = 0,
\end{align}
where $d\gamma$ is considered as a section of $T^*\Sigma\otimes \gamma^{-1} TG(n,2n)$ and the trace is with respect to $g.$

From now on, we will specialize to the case when $\Sigma$ is an oriented minimal Lagrangian and hence its Gauss map is valued in ${\rm Lag}^+(n)\sbst G(n,2n).$
Given a smooth function $F\colon U\to \bb R$ on an open set $U\sbst {\rm Lag}^+ (n),$ if $\gamma(\Sigma)\sbst U,$ one can consider the function $f:=F\circ \gamma$ on $\Sigma.$ 
The chain rule implies
\begin{align*}
\Hess_f(\cdot,\cdot)
& = \Hess_F \pr{d\gamma(\cdot), d\gamma (\cdot)}
+ dF\pr{\pr{\n d\gamma}(\cdot,\cdot)}
\end{align*}
where $\Hess_f$ is the Hessian of $f$ with respect to $g$ and $\Hess_F$ is the Hessian of $F$ with respect to the Grassmannian metric.
We then derive from \eqref{Minimal-Gauss} that 
\begin{equation}\label{evolution-equation}
\begin{split}    
\D f
= \tr\pr{\Hess_f}
= \sum_{j=1}^n \Hess_F \pr{d\gamma(e_j), d\gamma (e_j)} + dF\pr{\tr\, \n d\gamma}
& = \sum_{j=1}^n \Hess_F \pr{d\gamma(e_j), d\gamma (e_j)}
\end{split}
\end{equation}
if $e_j$'s form a local orthonormal tangent frame on $\Sigma.$
Therefore, it suffices to evaluate the Hessian of the function $F$ on $U$ in the Grassmannian metric.

To evaluate the Hessian in \eqref{evolution-equation}, it is essential to correctly interpret $d\gamma(e_j),$ as there are two natural matrix representations on $T_{L_S}{\rm Lag}^+(n).$
Since the transformation formula will also be used later, we briefly explain their relation.

Recall that when we fix $L_0\in {\rm Lag}^+(n)$ and an oriented orthonormal basis of it, any $L\in\mathcal O_{L_0}$ can be represented by a symmetric matrix $S$ if one views $L=L_S=\set{x+JSx: x\in L_0}.$
With respect to the basis of $L_0$, a tangent vector $\alpha\in T_{L_S}{\rm Lag}^+(n)$ is given by a symmetric matrix $A$ via a variation $S(t)=S+tA.$
We will denote this matrix by $A=[\alpha]_{L_0}.$

Alternatively, given such an element $\alpha\in T_{L_S}{\rm Lag}^+(n),$ via 
$$T_{L_S}{\rm Lag}^+(n)\sbst T_{L_S}G(n,2n)\simeq {\rm Hom}\pr{L_S,L_S^\perp},$$ 
$\alpha$ can be represented by another matrix when we use the corresponding bases of $L_S$ and $JL_S$ induced from those of $L_0$ and $JL_0.$ 
We will denote this matrix by $[\alpha]_{L_S}.$ 
Based on the formula for projections onto linear subspaces, these two matrices are related by
\begin{align}\label{matrix-in-graph-chart}
[\alpha]_{L_0}
= \pr{1+S^2}^{1/2} [\alpha]_{L_S} \pr{1+S^2}^{1/2}.
\end{align}
The newly expressed formula in Remark~\ref{rmk:Hess-different-basis} is indeed written using the matrix representation $[\alpha]_{L_S}.$

We apply \eqref{matrix-in-graph-chart} to our setting.
Fix an orthonormal frame $\set{e_j}$ at a point $p\in \Sigma$ where $(\Sigma,g)\to\bb R^{2n}$ is a minimal Lagrangian immersion and $\gamma\colon \Sigma\to {\rm Lag}^+(n)$ is its Gauss map.
Suppose $T_p\Sigma = L_S$ is in a graph chart $\mathcal O_{L_0}.$
For any $\ell=1,\cdots, n$ and $v\in L_S,$
\begin{align*}
d\gamma\pr{e_\ell}(v)
= A(e_\ell,v)\in N_pM = L_S^\perp
\end{align*}
where $A$ is the second fundamental form of $\Sigma.$
If we let $h_{jk\ell}:=\pair{A(e_j,e_k),Je_{\ell}}_g,$ then the matrix representation $[d\gamma(e_\ell)]_{L_S}$ is given by $\pr{h_{jk\ell}}_{j,k}.$
Thus, by \eqref{matrix-in-graph-chart}, the representing matrix in the graph chart $\mathcal O_{L_0}$ is
\begin{align}\label{d-gamma-ej}
[d\gamma\pr{e_\ell}]_{L_0}
= \pr{1+S^2}^{1/2} \pr{h_{jk\ell}}_{j,k} \pr{1+S^2}^{1/2}.
\end{align} 
We combine this identification with the log-concavity of $\Lambda$ and the harmonicity of $\gamma$ to prove Theorem~\ref{thm:Lambda-elliptic-equation}.

\begin{proof}
	[Proof of Theorem~\ref{thm:Lambda-elliptic-equation}]
	Fix a point $p\in\Sigma$ and find a base $n$-plane $L_0$ such that the tangent plane $T_p\Sigma$ can be written as the graph $L_S$ of a symmetric matrix $S$ over $L_0$ and it satisfies the conditions in Definition~\ref{def:AI}.
	Fix a basis of $L_0$ with respect to which $S={\rm diag}\pr{\lambda_1,\cdots,\lambda_n}.$ 
    
	Let $e_\ell$'s denote the corresponding orthonormal basis of $T_p\Sigma$ and  $h_{jk\ell}:=\pair{A(e_j,e_k),Je_{\ell}}_g.$
	The relation~\eqref{d-gamma-ej} implies that the $(j,k)$-entry of $[d\gamma\pr{e_j}]_{L_0}$ is
    $h_{jk\ell}
	\cdot \sqrt{1+\lambda_j^2}
	\cdot \sqrt{1+\lambda_k^2}.$
	Combining this with Proposition~\ref{prop:Hess-n-dim} (cf. Remark~\ref{rmk:Hess-different-basis}) and~\eqref{evolution-equation} leads to
	\begin{align*}
	\D\log\Lambda
	= & 
	-\sum_\ell \sum_{j<k} \pr{ \frac{\pr{1+\lambda_j^2}\pr{1+\lambda_k^2}}
		{(1+\lambda_j\lambda_k)^2} 
		\pr{h_{jj\ell}-h_{kk\ell}}^2
		+ 4 h_{jk\ell}^2}\\
	& - \sum_\ell\sum_{j<k} 
	\sum_{m\neq j,k} 
	\frac{2(1+\lambda_j\lambda_k)(1+\lambda_m^2)}
	{(1+\lambda_j\lambda_m)(1+\lambda_k\lambda_m)}
	h_{jk\ell}^2\\
	\le & -4\sum_\ell\sum_{j<k}h_{jk\ell}^2.
	\end{align*}
	
	Next, we prove the existence of a dimensional constant $c_n.$
	We note that all the terms $h_{jk\ell}$'s appear in the sum above except those of the form $h_{jjj}.$
	However, since $\Sigma$ is minimal, we know
	\begin{align*}
	h_{jjj}^2
	= \pr{-\sum_{k=1}^n h_{kkj}}^2,
	\end{align*}
	and hence there exists a positive number $c_n>0$ such that
	\begin{align*}
	\sum_\ell\sum_{j<k}h_{jk\ell}^2
	\ge c_n \sum_{j,k,\ell} h_{jk\ell}^2
	= c_n|A|^2.
	\end{align*}
	When $n=2,$ the sum is simply
	\begin{align*}
	4\sum_\ell\sum_{j<k}h_{jk\ell}^2
	= 4h_{112}^2 + 4h_{212}^2
	= \pr{-h_{222}}^2 + 3 h_{112}^2
	+ 3h_{221}^2 + \pr{-h_{111}}^2
	= |A|^2,
	\end{align*}
    based on the symmetries of $h_{jk\ell}$ in all three indices $j,k,$ and $\ell$ and the minimality condition $h_{111}+h_{221}=h_{112}+h_{222}=0.$
	This finishes the proof of the theorem.
\end{proof}

\section{\bf Examples of almost isoclinic Lagrangian submanifolds}
\label{sec:ex}

In this section, we will provide various examples of almost isoclinic Lagrangian submanifolds.

\subsection{Graphical examples}

We begin with graphical examples.
From Definition~\ref{def:AI}, there is a simple criterion for a gradient graph to be almost isoclinic.
We record it in the following lemma.

\begin{lemma}
\label{lem:AI-graph}
	Let $U$ be an open set in $\bb R^n$ and $u\colon U\to\bb R$ be a $C^2$ function.
	Suppose $\lambda_j$'s are the eigenvalues of $\n^2 u.$
	If $1+\lambda_j\lambda_k>0$ for any $j$ and $k,$ then the graph $\Sigma$ of $\n u$ is an almost isoclinic Lagrangian.
\end{lemma}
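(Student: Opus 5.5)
The plan is to verify Definition~\ref{def:AI} with the angle $\varphi=0$ at every point of $\Sigma$. First, the graph $\Sigma=\set{(x,\n u(x)):x\in U}$ of a gradient is Lagrangian. At a point $x$, its tangent space is spanned by the columns of $\begin{pmatrix} I\\ \n^2u(x)\end{pmatrix}$. Because $\n^2 u$ is symmetric, the pair $X=I$, $Y=\n^2u(x)$ satisfies \eqref{Lag}: $X^TY-Y^TX=\n^2u-\pr{\n^2u}^T=0$. Hence $\omega$ vanishes on $T_x\Sigma$. We give $\Sigma$ the orientation pulled back from $U\sbst\bb R^n$, so that this basis is oriented.

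By Remark~\ref{rmk:general-matrices-for-AI}, we may check the conditions of Definition~\ref{def:AI} using this oriented basis, which need not be orthonormal. If one prefers an orthonormal representative, right-multiply by $(I+(\n^2u)^2)^{-1/2}$. This matrix is symmetric positive definite, so it preserves orientation. It produces $X=(I+(\n^2u)^2)^{-1/2}$ and $Y=\n^2u\,(I+(\n^2u)^2)^{-1/2}$, which satisfy \eqref{ortho} and \eqref{Lag}.

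With $\varphi=0$ we have $X_0=X$, so condition~\eqref{AI-1} holds: $\det X_0=\det(I+(\n^2u)^2)^{-1/2}>0$, or simply $\det I=1>0$ in the non-orthonormal basis. The graph matrix $Y_0X_0^{-1}$ equals $\n^2u(x)$, since the two factors commute. Its eigenvalues are therefore exactly the $\lambda_j$. Condition~\eqref{AI-2} is then precisely the hypothesis $1+\lambda_j\lambda_k>0$. So $T_x\Sigma\in\mathcal{AI}_0(n)\sbst\mathcal{AI}(n)$ for every $x$, and $\Sigma$ is almost isoclinic.

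No step is a real obstacle. The only points to handle carefully are that the orientation of $\Sigma$ is the one induced from $U$, so that $\det X_0>0$, and that the normalization to an orthonormal basis does not change the graph matrix. The latter holds because $Y X^{-1}$ is invariant under right multiplication of $\begin{pmatrix}X\\Y\end{pmatrix}$ by an invertible matrix.
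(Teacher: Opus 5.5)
Your proposal is correct and follows essentially the same route as the paper: both verify Definition~\ref{def:AI} with $\varphi=0$ over $L_B=\bb R^n$, checking that $\det X>0$ for the orientation induced from $U$ and that the graph matrix $YX^{-1}$ equals $\n^2u$, whose eigenvalues are the $\lambda_j$. The only difference is cosmetic. The paper builds its orthonormal representative by diagonalizing $\n^2u=QDQ^T$ with $Q\in SO(n)$, while you normalize by $(I+(\n^2u)^2)^{-1/2}$ or invoke Remark~\ref{rmk:general-matrices-for-AI}; your representative differs from theirs by right multiplication by $Q^T\in SO(n)$.
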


\begin{proof}
	We take $L_B=\bb R^n$ to be the fixed base $n$-plane.
	Since $\n^2 u$ is a symmetric matrix, we could find $Q\in{SO}(n)$ such that $\Hess_u = QDQ^T,$ where $D={\rm diag}(\lambda_1,\cdots,\lambda_n).$
	Then the tangent space at a point can be represented by 
	\begin{align*}
	X &= Q\cdot {\rm diag}\pr{\frac 1{\sqrt{1+\lambda_1^2}}, \cdots, \frac 1{\sqrt{1+\lambda_n^2}}}\,
	\text{ and }\,
	Y =  Q\cdot {\rm diag}\pr{\frac{\lambda_1}{\sqrt{1+\lambda_1^2}}, \cdots, \frac{\lambda_n}{\sqrt{1+\lambda_n^2}}}.
	\end{align*}
	Thus, $\det X>0$ and the eigenvalues of $YX^{-1}$ are
	$\lambda_1,\cdots,\lambda_n.$
	Since $1+\lambda_j\lambda_k>0$ for any $j$ and $k,$ by Definition~\ref{def:AI}, $\Sigma$ is almost isoclinic.	
\end{proof}

As a result, we obtain examples of almost isoclinic Lagrangian submanifolds by choosing $C^2$ functions with conditions on the eigenvalues of their Hessians.
A particular consequence is a criterion for almost isoclinicity of a Lagrangian $n$-plane given by the gradient graph of a quadratic polynomial.

\begin{cor}
	\label{cor:AI-graph}
	Let $u\colon \bb R^n\to\bb R$ be a quadratic polynomial of the form
	\begin{align*}
	u(x_1,\cdots,x_n)
	= \frac 12 \sum_{j=1}^n \lambda_j x_j^2
	\end{align*} 
	for some real numbers $\lambda_j$'s.
	If $1+\lambda_j\lambda_k>0$ for any $j$ and $k,$ then the graph of $\n u$ is an almost isoclinic Lagrangian.
\end{cor}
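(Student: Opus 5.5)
This is an immediate specialization of Lemma~\ref{lem:AI-graph}, so the plan is to reduce to it and verify its hypotheses. Set $U=\bb R^n$ and take the given $u$. Then $u$ is a polynomial, hence $C^\infty$ and in particular $C^2$. Its gradient is $\n u=(\lambda_1x_1,\cdots,\lambda_nx_n)$. Its Hessian is the constant diagonal matrix $\n^2u={\rm diag}(\lambda_1,\cdots,\lambda_n)$, whose eigenvalues at every point are exactly $\lambda_1,\cdots,\lambda_n$.

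The hypothesis $1+\lambda_j\lambda_k>0$ for all $j,k$ is therefore precisely the eigenvalue condition required in Lemma~\ref{lem:AI-graph}, and it holds uniformly on $\bb R^n$. Applying the lemma, the graph of $\n u$ is an almost isoclinic Lagrangian.

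For completeness, I would also note two points.
\begin{enumerate}
\item The graph is the linear Lagrangian $n$-plane $\set{x+J\,{\rm diag}(\lambda_1,\cdots,\lambda_n)x : x\in\bb R^n}$. Its tangent space at every point is this same plane, so it suffices to check a single point. One can do this directly from Definition~\ref{def:AI} with $\varphi=0$, taking
\begin{align*}
X={\rm diag}\pr{(1+\lambda_j^2)^{-1/2}}, \qquad Y={\rm diag}\pr{\lambda_j(1+\lambda_j^2)^{-1/2}}.
\end{align*}
Then $\det X>0$ and $YX^{-1}={\rm diag}(\lambda_j)$, as in the proof of Lemma~\ref{lem:AI-graph}.
\item The orientation used is the one making $\det X>0$, i.e.\ the one induced from $\bb R^n$ by the graph map. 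This is the same convention as in the lemma.
\end{enumerate}

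There is no real obstacle here. The only thing to be careful about is that the condition is meant for all pairs $j\neq k$; for $j=k$ it is automatic, since $1+\lambda_j^2>0$.
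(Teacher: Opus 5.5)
Your proposal is correct and matches the paper's approach: the paper presents this corollary as an immediate specialization of Lemma~\ref{lem:AI-graph}, since the Hessian of $u$ is the constant matrix ${\rm diag}(\lambda_1,\cdots,\lambda_n)$. Your direct check via Definition~\ref{def:AI} with $\varphi=0$ is the same computation as in the proof of that lemma, with $Q=I$.
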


Given a $C^2$ function $u,$ when all eigenvalues of $\n^2 u$ are positive, the graph of $\n u$ is called a {\bf convex} Lagrangian \cite{Y02,SW02,CCH}.
When any two eigenvalues $\lambda_j$ and $\lambda_k$ of $\n^2 u$ satisfy $1+\lambda_j\lambda_k>0$ and $\lambda_j+\lambda_k>0,$ the graph of $\n u$ is called a {\bf two-convex} Lagrangian \cite{TTW25}.
Based on Lemma~\ref{lem:AI-graph} and Corollary~\ref{cor:AI-graph}, the notion of almost isoclinicity thus naturally generalizes the previous positivity conditions in the graphical setting.

\begin{figure}[h]
	\centering
	\includegraphics[width=5cm]{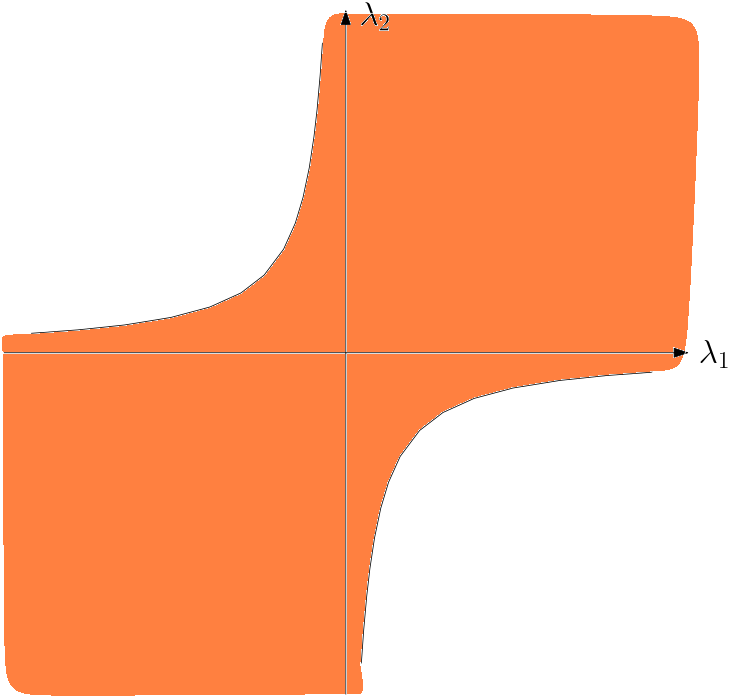}
	\includegraphics[width=5cm]{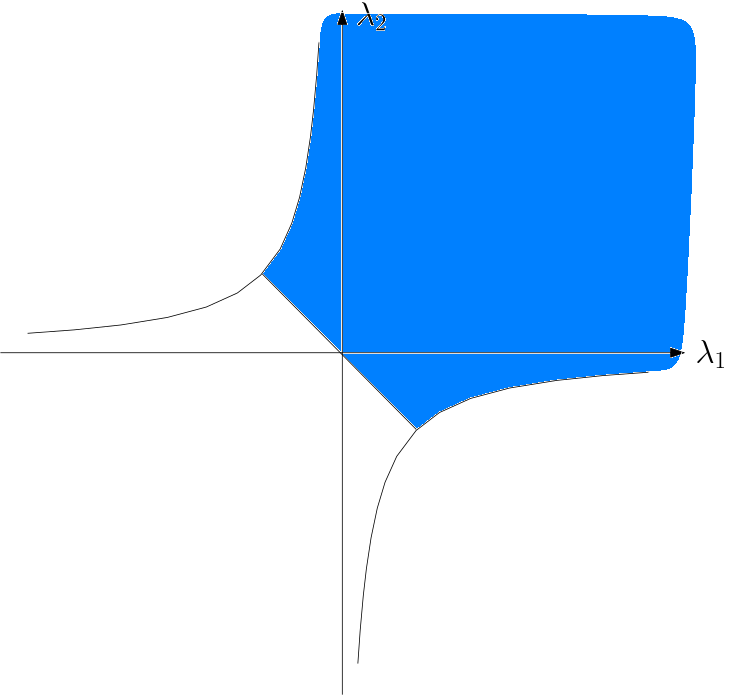}
	\caption{The orange region shows the range of $\lambda_1$ and $\lambda_2$ for graphical almost isoclinic Lagrangian submanifolds (i.e., $1+\lambda_1\lambda_2>0$).
    The blue region shows the range of $\lambda_1$ and $\lambda_2$ for graphical two-convex Lagrangian submanifolds (i.e., both $1+\lambda_1\lambda_2>0$ and $\lambda_1+\lambda_2>0$).
    }
	\label{pic:regions}
\end{figure}

\subsection{Symmetric examples}\label{sym-exam}
The next class of examples has an ${SO}(n)$-symmetry.
With the help of the symmetry, the analysis of the submanifolds could be reduced to that of one-dimensional curves.
As a consequence, we construct examples that are not graphical over any rotated $n$-planes.

We explain the construction as follows.
Suppose $Z=\set{z(r)\in\bb C:r\in I}$ is an immersed curve in $\bb C\simeq \bb R^2$ parametrized by $r\in I\sbst\bb R.$
We write
\begin{align*}
z(r) = \rho(r) e^{i\phi(r)}
= x(r) + iy(r)
\end{align*}
with $x=\rho\cos\phi$ and $y=\rho\sin\phi.$
We then look at the immersion $F\colon I\times \mathbb S^{n-1}\to\bb R^{2n}$ given by
\begin{align}\label{F-sym-def}
F(r,u)
= F(r,u_1,\cdots,u_n)
= \pr{x(r)\,u_1,
	\cdots ,
	x(r)\,u_n,
	y(r)\,u_1,
	\cdots,
	y(r)\,u_n}
\end{align}
where $u=(u_1,\cdots,u_n)\in \mathbb S^{n-1}$ is a parametrization of the standard $(n-1)$-dimensional sphere.
Then $F$ defines a Lagrangian submanifold;
see \cite{N07, S20}.
In the rest of this section, we will call such a Lagrangian submanifold an {\bf ${SO}(n)$-symmetric Lagrangian}.

The main criterion that will be applied later is summarized in the following proposition.
The proposition shows that almost isoclinicity is equivalent to monotonicity of the radial function $\rho.$

\begin{prop}
	\label{prop:AI-sym}
	Let $\rho$ and $\phi$ be $C^2$ functions on an interval $I,$ and let $F\colon I\times \mathbb S^{n-1}\to\bb R^{2n}$ be the immersed Lagrangian submanifold defined by $\rho$ and $\phi$ as in \eqref{F-sym-def}.
	Suppose $\rho>0.$
	\begin{enumerate}
		\item \label{AI-sym-1}
		If $\rho'>0,$ then $F$ defines an almost isoclinic Lagrangian.
		\item \label{AI-sym-2}
		If $F$ is an almost isoclinic Lagrangian, then $\rho'\neq 0.$
	\end{enumerate}
    Moreover, the function $\Lambda$ on $F$ is given by
    \begin{equation}\label{Lambda-formula-sym}
    \Lambda
    = \pr{\frac{\rho'}{\sqrt{\pr{\rho'}^2 + \rho^2\pr{\phi'}^2}}}^{n-1}.
    \end{equation}
\end{prop}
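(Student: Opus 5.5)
The plan is to reduce everything, by the $SO(n)$-symmetry, to a single explicit unitary matrix representing the tangent plane, and then read off both $\Lambda$ and the almost isoclinic conditions from it.

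\textbf{Step 1: invariance under real rotations.} For $P\in SO(n)$ acting diagonally on $\mathbb{C}^n$, a representative $X+iY$ of $L$ becomes $PX+iPY$. Then
\[
\wedge^2(PX)+\wedge^2(PY)=\wedge^2P\circ(\wedge^2X+\wedge^2Y)
\quad\text{and}\quad
\det\wedge^2P=1,
\]
so $\Lambda$ is unchanged. Similarly $X_\varphi\mapsto PX_\varphi$, which preserves the sign of $\det X_\varphi$. Also $Y_\varphi X_\varphi^{-1}\mapsto P(Y_\varphi X_\varphi^{-1})P^T$, which has the same eigenvalues. Hence both $\Lambda$ and membership in each $\mathcal{AI}_\varphi(n)$ are $SO(n)$-invariant. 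Since $F(r,Pu)=P\,F(r,u)$, it suffices to work at points with $u=e_1$.

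\textbf{Step 2: the tangent plane at $u=e_1$.} Regard $\mathbb{R}^{2n}$ as $\mathbb{C}^n$, so that $F(r,u)=z(r)\,u$.
\begin{itemize}
\item The radial direction gives $\partial_rF=z'(r)\,e_1$.
\item A sphere direction $v\perp e_1$ gives $z(r)\,v$.
\end{itemize}
So the oriented orthonormal basis is $\frac{z'}{|z'|}e_1,\ \frac{z}{|z|}e_2,\dots,\frac{z}{|z|}e_n$, where I fix the orientation of $I\times\mathbb S^{n-1}$ so that this basis is positively oriented. Thus $T_{(r,e_1)}F$ is represented by
\[
X+iY={\rm diag}\big(e^{i\alpha},e^{i\phi},\dots,e^{i\phi}\big),\qquad \alpha=\arg z'.
\]
Since $z'=(\rho'+i\rho\phi')e^{i\phi}$, we have $\beta:=\alpha-\phi=\arg(\rho'+i\rho\phi')$.

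\textbf{Step 3: the formula for $\Lambda$.} For diagonal $X$ and $Y$ with entries $\cos a_j$ and $\sin a_j$, the operator $\wedge^2X+\wedge^2Y$ is diagonal with entries $\cos(a_j-a_k)$. Here the pairs $(1,k)$ contribute $\cos\beta$ and the pairs $(j,k)$ with $j,k\ge2$ contribute $1$. Therefore
\[
\Lambda=\cos^{n-1}\beta=\Big(\rho'/\sqrt{(\rho')^2+\rho^2(\phi')^2}\Big)^{n-1},
\]
which is \eqref{Lambda-formula-sym}.

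\textbf{Step 4: part (1).} Assume $\rho'>0$. Take the rotation angle to be $\phi(r)$ at that point. Then
\[
e^{-i\phi}(X+iY)={\rm diag}(e^{i\beta},1,\dots,1)
\]
with $\beta\in(-\pi/2,\pi/2)$, because $\rho'>0$.
\begin{itemize}
\item $X_\phi={\rm diag}(\cos\beta,1,\dots,1)$, so $\det X_\phi>0$.
\item The eigenvalues of $Y_\phi X_\phi^{-1}$ are $\tan\beta,0,\dots,0$, so every product $1+\lambda_j\lambda_k$ equals $1$.
\end{itemize}
Hence the tangent plane lies in $\mathcal{AI}_{\phi(r)}(n)$.

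\textbf{Step 5: part (2).} If the tangent plane lies in $\mathcal{AI}_\varphi(n)$, then Lemmas~\ref{lem:Lambda-in-terms-of-lambda} and~\ref{lem:Lambda-rot-inv} give
\[
\Lambda=(\det X_\varphi)^{n-1}\prod_{j<k}(1+\lambda_j\lambda_k)>0.
\]
Combined with Step 3, this forces $\cos\beta\neq0$, i.e.\ $\rho'\neq0$. When $n$ is even it even forces $\rho'>0$.

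\textbf{Main obstacle.} The only delicate point I expect is the orientation bookkeeping in Step 2. Reversing the orientation of $F$ multiplies the representative by an orientation-reversing element. That changes $\det X$ by a sign and, for even $n$, flips the sign of $\Lambda$. I would handle this by fixing once and for all the orientation of $I\times\mathbb S^{n-1}$ induced by $\partial_r$ followed by the standard orientation of the sphere, and checking that this matches the basis above.
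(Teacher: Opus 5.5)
Your proof is correct and follows the paper's strategy: compute $\Lambda$ explicitly, rotate by $\varphi=\phi(r)$ so that $\det X_\varphi>0$ and $Y_\varphi X_\varphi^{-1}$ has rank one (hence every $1+\lambda_j\lambda_k=1$), and note that $\rho'=0$ forces $\Lambda=0$, which is impossible on $\mathcal{AI}(n)$. The only difference is bookkeeping. You reduce to $u=e_1$ by ambient $SO(n)$-equivariance and use a diagonal unitary representative, as the paper itself does for Lawlor necks. The paper instead works at an arbitrary $u$ with the non-orthonormal representative $xI+(x'-x)uu^T$, $yI+(y'-y)uu^T$, together with Lemma~\ref{Lambda_general} and the determinant formulas for $aI+buu^T$. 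Your orientation convention agrees with the paper's frame $(u,\partial_2u,\dots,\partial_nu)\in SO(n)$.
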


We present two lemmas that will be used in the proof of the proposition.
We first need a formula for $\Lambda$ in terms of bases that are not necessarily orthonormal. The following lemma generalizes the previous formula for $\Lambda$, which applies in the special case when the basis matrix is in ${U}(n)$.

\begin{lemma}\label{Lambda_general}
Suppose an oriented Lagrangian subspace $L$ is represented by a $2n\times n$ matrix
$
\begin{pmatrix}
A\\B
\end{pmatrix}.
$
Then
\[
\Lambda(L)
=
\frac{
\det\!\left(\wedge^2 A+\wedge^2 B\right)
}{
\det\!\left(A^TA + B^TB\right)^{(n-1)/2}
}.
\]
\end{lemma}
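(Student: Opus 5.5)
The plan is to reduce to the orthonormal case by a change of basis. Since the columns of $\begin{pmatrix}A\\B\end{pmatrix}$ form an oriented basis of $L$, this matrix has rank $n$. The Gram matrix $G:=A^TA+B^TB$ is therefore symmetric positive definite. I would set $P:=G^{-1/2}$, the positive definite square root of $G^{-1}$, and define $X:=AP$ and $Y:=BP$. Then
\[
X^TX+Y^TY=P^TGP=I .
\]
The Lagrangian condition $A^TB-B^TA=0$ (which holds because $\omega|_L=0$) is preserved under right multiplication: $X^TY-Y^TX=P^T(A^TB-B^TA)P=0$. Since $\det P>0$, the columns of $\begin{pmatrix}X\\Y\end{pmatrix}$ still form an \emph{oriented} orthonormal basis of $L$. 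So $X+iY\in U(n)$ represents $L$, and Definition~\ref{Lambda} gives $\Lambda(L)=\det(\wedge^2X+\wedge^2Y)$.

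Next I would use functoriality of $\wedge^2$: $\wedge^2(AP)=\wedge^2A\circ\wedge^2P$, and likewise for $B$. Hence
\[
\wedge^2X+\wedge^2Y=(\wedge^2A+\wedge^2B)\circ\wedge^2P ,
\]
and taking determinants,
\[
\Lambda(L)=\det(\wedge^2A+\wedge^2B)\cdot\det(\wedge^2P).
\]

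The remaining step is the classical identity $\det(\wedge^2P)=(\det P)^{n-1}$ for an $n\times n$ matrix $P$. For diagonalizable $P$ it follows from \eqref{evalue-transform}: the product $\prod_{j<k}\lambda_j\lambda_k$ contains each $\lambda_j$ exactly $n-1$ times. That case already covers our $P$, which is symmetric positive definite; the general case follows by density. Then $\det P=(\det G)^{-1/2}$ gives $\det(\wedge^2P)=\det(A^TA+B^TB)^{-(n-1)/2}$, which is the claimed formula.

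There is no real obstacle here. The only points needing care are that right multiplication by $P$ preserves both the Lagrangian condition and the orientation (because $\det P>0$), and that $\wedge^2$ is multiplicative.
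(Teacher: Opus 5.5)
Your proof is correct and follows essentially the same route as the paper. Both arguments write an oriented orthonormal representative as $\begin{pmatrix}A\\B\end{pmatrix}Q$ with $\det Q>0$, use multiplicativity of $\wedge^2$ and $\det(\wedge^2 Q)=(\det Q)^{n-1}$, and read off $\det Q=\det(A^TA+B^TB)^{-1/2}$ from $Q^T(A^TA+B^TB)Q=I$. The only difference is that you construct $Q$ explicitly as $(A^TA+B^TB)^{-1/2}$ and check the Lagrangian and orientation conditions directly, whereas the paper starts from an arbitrary orthonormal representative and takes $Q$ to be the matrix relating the two bases.
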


We note that the denominator is always positive as long as the columns of $
\begin{pmatrix}
A\\B
\end{pmatrix}
$ are linearly independent. 
Therefore, $\Lambda(L)$ and $\det\!\left(\wedge^2 A+\wedge^2 B\right)$ always have the same sign.

\begin{proof}
[Proof of Lemma~\ref{Lambda_general}]
	Let $X+iY\in U(n)$ be a representative of $L$ by an oriented orthonormal basis of it.
	Then there exists an element~$Q\in {GL}(n)$ with $\det Q>0$ such that
	\begin{align*}
	\begin{pmatrix}
	A\\B
	\end{pmatrix}Q
	= \begin{pmatrix}
	X\\Y
	\end{pmatrix};
	\end{align*}
	that is, $X=AQ$ and $Y=BQ.$
	Then the definition of $\Lambda$ implies
	\begin{equation}\label{L-AB}
	\begin{split}
	\Lambda(L)
	= \det\pr{\wedge^2 X + \wedge^2 Y}
	= \det\pr{\wedge^2 (AQ) + \wedge^2 (BQ)}
	& = \det\pr{\pr{\wedge^2 A + \wedge^2 B}\circ \wedge^2Q}\\
	& = \det\pr{\wedge^2 A + \wedge^2 B}
	\cdot \det \pr{\wedge^2 Q}\\
	& = \det\pr{\wedge^2 A + \wedge^2 B}
	\cdot \pr{\det Q}^{n-1}.
	\end{split}
	\end{equation}
	On the other hand, the orthogonality condition~\eqref{ortho} implies
	\begin{align*}
	I = X^TX + Y^TY
	= (AQ)^T(AQ) + (BQ)^T(BQ)
	= Q^T\pr{A^TA+B^TB}Q.
	\end{align*}
	Since $\det Q>0,$ this implies
	\begin{align*}
	\det Q = \pr{\det\pr{A^TA+B^TB}}^{-1/2}.
	\end{align*}
	Combining this with \eqref{L-AB} finishes the proof.
\end{proof}

The following linear algebra lemma will be used repeatedly. 

\begin{lemma}\label{aI+buu^T}
Let \(u\in \mathbb R^n\) be a unit vector and $a$ and $b$ be real numbers.
\begin{enumerate}
\item[(i)] For $A=aI+buu^T,$ 
$\det A=(a+b)a^{n-1}$ and
$\det(\wedge^2 A) =(a+b)^{n-1}\cdot a^{(n-1)^2}.$

\item[(ii)] The matrix \(A=aI + buu^T\) is invertible if and only if $a\neq 0$ and $a+b\neq 0.$
In this case,
\[
A^{-1}
=
\frac{1}{a}I-\frac{b}{a(a+b)}uu^T.
\]

\item[(iii)] If $A_1=a_1I+b_1uu^T$ and $A_2=a_2I+b_2uu^T,$
then
\begin{equation}
\det(\wedge^2 A_1+\wedge^2 A_2)
=
\bigl(a_1(a_1+b_1)+a_2(a_2+b_2)\bigr)^{n-1}
\cdot \bigl(a_1^2+a_2^2\bigr)^{\binom{n-1}{2}}.
\end{equation}
\end{enumerate}
\end{lemma}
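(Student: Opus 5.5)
The plan is to diagonalize in a basis adapted to $u$ and then read off eigenvalues; all three parts reduce to that. First choose an orthonormal basis $\{u=v_1,v_2,\dots,v_n\}$ of $\mathbb R^n$. In this basis $A=aI+buu^T$ is diagonal, with eigenvalue $a+b$ on $v_1$ and $a$ on $v_2,\dots,v_n$. Determinants and $\wedge^2$ are both unaffected by an orthogonal change of basis, so we may compute in this basis.

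For (i), $\det A=(a+b)a^{n-1}$ is immediate. By \eqref{evalue-transform}, $\wedge^2A$ is diagonal in the basis $\{v_j\wedge v_k\}_{j<k}$, with eigenvalues given by products of pairs of eigenvalues of $A$. There are $n-1$ pairs $v_1\wedge v_k$, each with eigenvalue $(a+b)a$. There are $\binom{n-1}{2}$ pairs $v_j\wedge v_k$ with $j,k\ge2$, each with eigenvalue $a^2$. Hence
\[
\det(\wedge^2 A)=(a+b)^{n-1}a^{(n-1)+2\binom{n-1}{2}}=(a+b)^{n-1}a^{(n-1)^2},
\]
since $(n-1)+(n-1)(n-2)=(n-1)^2$.

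For (ii), invertibility holds exactly when both eigenvalues $a$ and $a+b$ are nonzero. For the inverse, I would just check that
\[
(aI+buu^T)\Big(\tfrac1aI-\tfrac{b}{a(a+b)}uu^T\Big)=I,
\]
using $uu^Tuu^T=uu^T$ because $|u|=1$. Expanding, the coefficient of $uu^T$ is $\frac ba-\frac{b}{a+b}-\frac{b^2}{a(a+b)}=\frac{b(a+b)-ab-b^2}{a(a+b)}=0$.

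For (iii), $A_1$ and $A_2$ are simultaneously diagonal in the same basis, so $\wedge^2A_1$ and $\wedge^2A_2$ are simultaneously diagonal in $\{v_j\wedge v_k\}$. The sum $\wedge^2A_1+\wedge^2A_2$ therefore has eigenvalue $a_1(a_1+b_1)+a_2(a_2+b_2)$ on each of the $n-1$ vectors $v_1\wedge v_k$. It has eigenvalue $a_1^2+a_2^2$ on each of the $\binom{n-1}{2}$ remaining vectors. Multiplying these eigenvalues gives the formula.

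None of these steps is a real obstacle. The only point needing care is justifying that $\wedge^2$ commutes with orthogonal conjugation, namely $\wedge^2(QDQ^T)=\wedge^2Q\,\wedge^2D\,(\wedge^2Q)^{-1}$. This follows from functoriality, $\wedge^2(ST)=\wedge^2S\circ\wedge^2T$. It is also what makes the simultaneous diagonalization in (iii) legitimate.
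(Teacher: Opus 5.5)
Your proposal is correct and follows essentially the same route as the paper: you diagonalize with eigenvalue $a+b$ on $u$ and $a$ on $u^\perp$, then read off the eigenvalues of $\wedge^2 A$ and of $\wedge^2 A_1+\wedge^2 A_2$ on the splitting $(u\wedge u^\perp)\oplus\wedge^2 u^\perp$. The only cosmetic difference is that you get $\det(\wedge^2 A)$ by counting eigenvalues, whereas the paper invokes the identity $\det(\wedge^2 A)=(\det A)^{n-1}$; both give the exponent $(n-1)^2$.
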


\begin{proof}
Since \(u\) is a unit vector, 
\(Au=(aI+buu^T)u=(a+b)u.\)
On the other hand, if \(v\in u^\perp\), then \(u^T v=0\), and hence
\(
Av=(aI+buu^T)v=av.
\)
Thus \(A\) has eigenvalue \(a+b\) in the direction of \(u\) with multiplicity \(1\), and eigenvalue \(a\) on \(u^\perp\) with multiplicity \(n-1\). 
This implies the formula for $\det A,$ and the formula for $\det \wedge^2 A$ follows from a general formula $\det \wedge^2 A = \pr{\det A}^{n-1}.$

Next, (ii) follows from (i) and a direct computation using $A^{-1}A=I$.

Finally, the decomposition
$\wedge^2\mathbb{R}^n
=(u\wedge u^\perp)\oplus \wedge^2 u^\perp$
simultaneously diagonalizes \(\wedge^2 A_1\) and \(\wedge^2 A_2\). 
On \(u\wedge u^\perp\), the eigenvalue of \(\wedge^2 A_j\) is \(a_j(a_j+b_j)\), while on \(\wedge^2 u^\perp\), the eigenvalue is \(a_j^2\). 
Hence \(\wedge^2 A_1+\wedge^2 A_2\) has an eigenvalue
$a_1(a_1+b_1)+a_2(a_2+b_2)$
on \(u\wedge u^\perp\) with multiplicity \(n-1\), and an eigenvalue
$a_1^2+a_2^2$
on \(\wedge^2 u^\perp\) with multiplicity \(\binom{n-1}{2}\). Therefore
\[
\det\!\left(\wedge^2 A_1+\wedge^2 A_2\right)
=
\left(a_1(a_1+b_1)+a_2(a_2+b_2)\right)^{n-1}
\left(a_1^2+a_2^2\right)^{\binom{n-1}{2}}.
\]
The proof is complete.
\end{proof}

\begin{proof}
[Proof of Proposition~\ref{prop:AI-sym}]
	We first prove \eqref{AI-sym-2}.
	We will evaluate $\Lambda$ in terms of $\rho$ and $\phi,$ and then show that if $\rho'=0$ at a point, then $\Lambda=0$ at the point.
	This then proves~\eqref{AI-sym-2} based the positivity of $\Lambda$ by Definition~\ref{def:AI}.

We may choose $u_j$'s such that the matrix
	\begin{align*}
	U = \pr{u\,\,\bd_2u\,\,\cdots\,\,\bd_n u} \in{SO}(n)
	\end{align*} 
	if we view $u=(u_1,\cdots,u_n)^T$ as a column vector. 
    In particular, $UU^T$ is the identity.

	To evaluate $\Lambda,$ we use the tangent vectors $F_r$ and $F_j$'s ($j=2,\cdots,n$) to form the two $n\times n$ matrices 
	\begin{align*}
	X=\begin{pmatrix}
	 x' u
	& x\bd_2u
	& \cdots
	&  x\bd_nu\\
	\end{pmatrix}\,
    \text{ and }\,
	Y=\begin{pmatrix}
	y' u
	& y\bd_2u
	& \cdots
	& y\bd_nu\\
	\end{pmatrix}.
	\end{align*}
	Moreover, when checking the almost isoclinic condition, we can look at $\td X:=XU^T$ and $\td Y:=YU^T$ since $U^T\in{SO}(n).$
    Using $X=xU+(x'-x)\begin{pmatrix}u& 0& \cdots &0\end{pmatrix}$, we derive that \begin{equation}\label{XUT-formula}
	\begin{split}
	\td X
	= x I
	+ (x' - x) uu^T
	\text{ and  }
	\td Y
	= yI
	+ (y'-y) uu^T.
	\end{split}
	\end{equation}
Therefore, by Lemma~\ref{aI+buu^T},
\[\det\!\left(\wedge^2 \td X+\wedge^2 \td Y\right) =(xx'+yy')^{n-1}(x^2+y^2)^{\binom{n-1}{2}}=(\rho\rho')^{n-1}(\rho^2)^{\binom{n-1}{2}}\]
and
\[
\det\!\left(\td X^T \td X+\td Y^T \td Y\right)
=
\bigl((x')^2+(y')^2\bigr)\bigl(x^2+y^2\bigr)^{n-1}.
\]
Since $x+iy=\rho e^{i\phi}$, $x'+iy'=(\rho'+i\rho\phi')e^{i\phi}$ and $xx'+yy'=\rho\rho'$.
Combining these with Lemma~\ref{Lambda_general} proves \eqref{Lambda-formula-sym}.
	
	Using the formula of $\Lambda,$ \eqref{Lambda-formula-sym}, we now prove \eqref{AI-sym-2} in the proposition.
	If $\rho'=0$ at a point of the immersion, then we get $\Lambda=0.$
	Thus, the immersion is not almost isoclinic.
	This proves \eqref{AI-sym-2}.

	Next, we prove \eqref{AI-sym-1}.
	From above, we know that $\Lambda>0$ if $\rho'>0.$
	Thus, it remains to check the sign of $1+\lambda_j\lambda_k$.
    By Remark~\ref{rmk:general-matrices-for-AI}, it suffices to work with $\td X$ and $\td Y.$
	Given any angle $\varphi\in\bb R,$ we have
	\begin{align*}
	\td X_\varphi
	& = \cos\varphi \cdot \td X 
	+ \sin\varphi\cdot \td Y
	= \pr{x\cos\varphi + y\sin\varphi}
	+ ((x'-x)\cos\varphi+(y'-y)\sin\varphi)uu^T
	\text{ and}\\
	\td Y_\varphi
	& = -\sin\varphi\cdot \td X
	+ \cos\varphi\cdot \td Y
	= \pr{-x\sin\varphi + y\cos\varphi}
	+ (-(x'-x)\sin\varphi+(y'-y)\cos\varphi)uu^T.
	\end{align*}
	Applying (i) of Lemma \ref{aI+buu^T} to $\td X_\varphi$, we obtain
	\begin{align}\label{eq:detX}
	\det \td X_\varphi
	& =  \pr{x'\cos\varphi
		+ y'\sin\varphi}
	\cdot \pr{x\cos\varphi+y\sin\varphi}^{n-1}.
	\end{align}
	Thus, at a point $(r,u),$ we choose $\varphi=\phi(u),$ which implies
	\begin{align*}
	x'\cos\varphi + y'\sin\varphi
	& = \rho'\cos(\varphi-\phi)
	+ \rho\phi'\sin(\varphi-\phi)
	= \rho'
	\text{ and}\\
	x\cos\varphi + y\sin\varphi
	& = \rho\cos(\varphi-\phi)
	= \rho.
	\end{align*}
	Such a choice of $\varphi$ implies $\det \td X_\varphi>0$ at the point $(r,y).$
	Moreover,
	\begin{align*}
	\td X_\varphi
	= \rho I+(\rho' - \rho) uu^T
	\text{ and }
	\td Y_\varphi
	 = {\rho\phi'} uu^T.
	\end{align*}
	Applying formula (ii) of Lemma \ref{aI+buu^T} to $\td X_\varphi^{-1}$, we see that 
	$\td Y_\varphi \td X_\varphi^{-1}$ is of rank one.
	Hence, the only possibility of $1+\lambda_j\lambda_k$ for any $j<k$ is $1,$ so $F$ is almost isoclinic.
\end{proof}

Proposition~\ref{prop:AI-sym} characterizes the almost isoclinicity among ${SO}(n)$-symmetric Lagrangians in a direct way.
Based on that, we obtain an additional embeddedness property for ${SO}(n)$-symmetric Lagrangians.

\begin{cor}
	\label{cor:SOn-emb}
	Let $F\colon I\times \mathbb S^{n-1}\to\bb R^{2n}$ be an ${SO}(n)$-symmetric Lagrangian defined by functions $\rho$ and $\phi$ on an interval $I.$
	If $\rho>0$ and $\rho'>0,$ then $F$ defines an embedding.
	In particular, an almost isoclinic submanifold defined in this way is always embedded.
\end{cor}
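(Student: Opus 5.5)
We must show that if $\rho>0$ and $\rho'>0$, then $F(r,u)=(x(r)u,\,y(r)u)$ is injective; since it is already an immersion, this makes it an embedding. In complex notation, $F(r,u)=z(r)\,u\in\mathbb C^n$, with $u\in\mathbb S^{n-1}\subset\mathbb R^n$ real and $z(r)=\rho(r)e^{i\phi(r)}$.

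\emph{Injectivity.} Suppose $F(r_1,u_1)=F(r_2,u_2)$. Taking Euclidean norms, and using that $u_1,u_2$ are real unit vectors, gives $|z(r_1)|=|z(r_2)|$, that is, $\rho(r_1)=\rho(r_2)$. Because $\rho'>0$ on the interval $I$, $\rho$ is strictly increasing, so $r_1=r_2=:r$. Then $z(r)u_1=z(r)u_2$ with $z(r)\neq 0$ (as $\rho(r)>0$), which forces $u_1=u_2$.

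\emph{Properness.} Injectivity alone does not give an embedding; we also need $F$ to be a homeomorphism onto its image. Let $F(r_k,u_k)\to F(r,u)$. Continuity of norms gives $\rho(r_k)\to\rho(r)$. Since $\rho$ is a continuous strictly monotone function on an interval, it is a homeomorphism onto its image, so $r_k\to r$. Then $u_k = F(r_k,u_k)/z(r_k)\to F(r,u)/z(r)=u$. Hence $F^{-1}$ is continuous on the image, and $F$ is an injective immersion that is a topological embedding, i.e., an embedding.

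For the final claim, let $F$ be almost isoclinic. By Proposition~\ref{prop:AI-sym}\eqref{AI-sym-2}, $\rho'\neq 0$ everywhere on the connected interval $I$, so $\rho'$ has a constant sign. If $\rho'<0$, reparametrize by $r\mapsto -r$; this reverses orientation, which is harmless because the image is unchanged. Now $\rho'>0$, and the first part applies.

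No step is a real obstacle. The one point that needs care is the topological-embedding part rather than mere injectivity, and the argument above handles it through the inverse-function continuity of the monotone $\rho$.
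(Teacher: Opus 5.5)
Your proof is correct, and it is arranged a bit differently from the paper's.

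\textbf{Injectivity.} The paper compares coordinates. Assuming without loss of generality $x(r)\neq 0$, the relations $x(r)u_k=x(s)v_k$ force $u=\pm v$. In each case, $\rho(r)=\rho(s)$ together with $\rho'>0$ gives $r=s$, which also rules out $u=-v$. Your identity $|F(r,u)|=\rho(r)$ holds because $u$ is a real unit vector. It gives $r_1=r_2$ in one step and removes the case split, after which $z(r)\neq 0$ gives $u_1=u_2$. Both arguments rest on the same fact, the strict monotonicity of $\rho$.

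\textbf{Embedding.} The paper stops at injectivity and declares $F$ an embedding. You check that $F$ is a homeomorphism onto its image, using continuity of $\rho^{-1}$ on the interval $\rho(I)$. This is the right thing to do, since injective immersions need not be embeddings.

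\textbf{The ``in particular'' clause.} You derive it explicitly from part (2) of Proposition~\ref{prop:AI-sym} and the constant sign of $\rho'$ on $I$; the paper leaves this step implicit. The case $\rho'<0$ is not vacuous. For odd $n$, formula \eqref{Lambda-formula-sym} still gives $\Lambda>0$ when $\rho'<0$. Using the chart angle $\varphi=\phi+\pi$ instead of $\varphi=\phi$ in the proof of Proposition~\ref{prop:AI-sym} then verifies the definition. So your reparametrization $r\mapsto -r$, or simply the remark that only strict monotonicity of $\rho$ is used, is genuinely needed there.
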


\begin{proof}
	Recall that $F$ is defined in \eqref{F-sym-def} with $x+iy=\rho e^{i\phi}.$
	Suppose there exist $(r,u),(s,v)\in I\times \mathbb S^{n-1}$ such that
	\begin{align}\label{F-embedding}
	F(r,u)
	= F(s,v).
	\end{align}
	We will show that $(r,u)=(s,v),$ and hence the embeddedness of $F$ follows.
	
	Since $\rho>0,$ we know that $x(r)\neq 0$ or $y(r)\neq 0.$
	We assume $x(r)\neq 0$ and the case when $y(r)\neq 0$ is similar.
	The condition~\eqref{F-embedding} then implies $x(s)\neq 0,$ and more precisely,
	\begin{align*}
	x(r) u_k = x(s) v_k
	\end{align*}
	for all $k=1,\cdots,n.$
	If $u_k\neq 0,$ this then implies that the quantity
	$\frac{u_k}{v_k}=\frac{x(s)}{x(r)}$
	is independent of $k.$
	Thus, the only possibilities are $u=v$ or $u=-v.$
	
	If $u=v,$ we then get $x(r)=x(s),$ and similarly $y(r)=y(s).$
	This implies $\rho(r)=\rho(s).$
	Since $\rho'>0,$ we then conclude $r=s.$
	
	If $u=-v,$ then $x(r)=-x(s)$ and similarly $y(r)=-y(s).$
	This again implies $\rho(r)=\rho(s),$ so by $\rho'>0,$ we conclude $r=s,$ which contradicts $u=-v.$
	
	Thus, we prove that \eqref{F-embedding} implies $r=s$ and $u=v.$
	This proves that $F$ defines an embedding.
\end{proof}

Next, we use Proposition~\ref{prop:AI-sym} to construct properly embedded Lagrangians that are almost isoclinic.

\begin{prop}
	\label{prop:SOn-general-example}
	Given $0<a<b<\infty$ and $C^1$ functions $\rho,\phi\colon[a,b]\to\bb R$ such that $\rho>0$ and $\rho'>0,$ there exists a properly embedded Lagrangian that is almost isoclinic such that it contains the submanifold defined by
	\begin{align*}
	F(r,u)
	= F(r,u_1,\cdots,u_n)
	= \pr{x(r)\,u_1,
		\cdots ,
		x(r)\,u_n,
		y(r)\,u_1,
		\cdots,
		y(r)\,u_n}
	\end{align*}
	for $r\in [a,b],$ $u=(u_1,\cdots,u_n)\in \mathbb S^{n-1}$ as a subset, where $x$ and $y$ are real-valued functions such that
	$x+iy = \rho e^{i\phi}.$
\end{prop}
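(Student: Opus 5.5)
The plan is to extend the pair $(\rho,\phi)$ from $[a,b]$ to the whole line $\bb R$ (or to an interval $(0,\infty)$ with suitable end behavior). We keep $\rho>0$ and $\rho'>0$ everywhere, so that Proposition~\ref{prop:AI-sym}~\eqref{AI-sym-1} and Corollary~\ref{cor:SOn-emb} yield an almost isoclinic embedded ${SO}(n)$-symmetric Lagrangian. Properness will be arranged by making the image of $r\mapsto\rho(r)$ equal to $(0,\infty)$ while $\rho$ is strictly increasing.

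\textbf{Step 1: extension of $\rho$ and $\phi$.} We first extend $\phi$ to a $C^2$ (indeed smooth away from the splice) function on $(0,\infty)$; for instance, we make it constant for $r\le a/2$ and for $r\ge 2b$, and interpolate smoothly. Since Proposition~\ref{prop:AI-sym} is stated for $C^2$ functions while the data are only $C^1$, there are two options. One is to observe that the proof of Proposition~\ref{prop:AI-sym} only uses first derivatives, since $\Lambda$ and the tangent planes involve only $x',y'$. The other is to note that the statement only requires containment as a subset, so we regard the curve on $[a,b]$ as given and extend it in $C^1$ fashion.

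For $\rho$, we take $\rho$ on $(0,\infty)$ strictly increasing with $\rho\to 0$ as $r\to 0$ and $\rho\to\infty$ as $r\to\infty$. To do this we extend $\rho$ linearly-type beyond $b$ with positive slope $\rho'(b)$, and below $a$ by a monotone $C^1$ interpolation to a function such as $\rho(r)=cr$ near $0$. This keeps $\rho'>0$ throughout.

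\textbf{Step 2: closing the origin.} Near $r=0$ we have $\phi$ constant, so the curve is a ray $\{te^{i\phi_0}\}$ and the submanifold there is a piece of the Lagrangian plane $e^{i\phi_0}\bb R^n$ minus the origin. Adding the origin gives a smooth manifold containing a neighborhood of $0$ in that plane. This plane is almost isoclinic, since it is $e^{i\phi_0}L_B$ with all $\lambda_j=0$ in the chart $\varphi=\phi_0$, so $\det X_\varphi=1$ and Definition~\ref{def:AI} holds. Alternatively, one can simply parametrize $r\in\bb R$ with $\phi$ constant for $r$ near $0$ and use the full line $\{te^{i\phi_0}: t\in\bb R\}$ via $u\mapsto -u$; but monotonicity of $\rho$ requires $r>0$, so I will use the plane-filling version.

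\textbf{Step 3: properness and embeddedness.} Embeddedness away from the origin is Corollary~\ref{cor:SOn-emb}. The origin is hit only once, because $\rho>0$ elsewhere. Properness follows because $|F(r,u)|=\rho(r)$, which is a homeomorphism of $(0,\infty)$ onto $(0,\infty)$. Hence preimages of compact balls are of the form $\{r\le r_0\}\times \mathbb S^{n-1}$ together with the origin, which is compact.

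The main obstacle is regularity at the splices and at the origin, namely ensuring that the $C^1$ interpolations keep $\rho'>0$ and that the glued manifold is smooth at $0$. Both are handled by the constancy of $\phi$ near $0$ and by standard monotone interpolation.
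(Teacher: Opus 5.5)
Your proposal is correct and follows essentially the paper's argument. You extend $(\rho,\phi)$ to $(0,\infty)$ keeping $\rho'>0$, make the curve a straight ray near $0$ so the piece near the origin is a flat, almost isoclinic disc in a Lagrangian plane, and invoke Proposition~\ref{prop:AI-sym} and Corollary~\ref{cor:SOn-emb} elsewhere. Your explicit requirement $\rho\to\infty$, combined with $|F(r,u)|=\rho(r)$, is a welcome addition: it is what actually yields properness (and completeness), a point the paper leaves implicit when it only asks $\tilde\phi$ to be constant near infinity.
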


\begin{proof}
	We extend $\rho$ and $\phi$ to functions $\td\rho$ and $\td\phi$ on $[0,\infty)$ such that
	\begin{enumerate}
		\item 
		$\td\rho>0$ and $\pr{\td\rho}'>0$ on $(0,\infty),$
		\item 
		there exists $\varepsilon>0$ such that $\td\rho(r)=r$ and $\td\phi(r)=0$ for $r\in[0,\varepsilon],$ and
		\item 
		$\td\phi$ is constant on $[\varepsilon^{-1},\infty).$
	\end{enumerate}
	We can then use $\td\rho$ and $\td\phi$ to construct a new embedding 
	\begin{align*}
	\td F(r,u)
	= \begin{cases}
	\pr{\td x(r)\,u_1,
		\cdots ,
		\td x(r)\,u_n,
		\td y(r)\,u_1,
		\cdots,
		\td y(r)\,u_n}
	&\text{if }r>0\\
	0&\text{if }r=0
	\end{cases},
	\end{align*}
	where $\td x$ and $\td y$ are real-valued functions such that 
	$\td x+\td iy = \td\rho e^{i\td\phi}.$
	Note that on $(\varepsilon,\infty)\times \mathbb S^{n-1},$ $\td F$ defines an almost isoclinic embedding by Proposition~\ref{prop:AI-sym} and Corollary~\ref{cor:SOn-emb}.
	On the other hand, on $[0,\varepsilon)\times \mathbb S^{n-1},$ $\td F$ is a flat disc, which is automatically Lagrangian and almost isoclinic.
	The strict monotonicity of $\td \rho$ guarantees that these two pieces do not intersect, so the whole $\td F$ is still an embedding.
	
	Finally, our choice of $\td\phi$ at infinity tells us that the submanifold is asymptotically flat.
	In particular, it is complete.
\end{proof}

As a corollary, we obtain examples of almost isoclinic Lagrangians that are not graphical over any rotated $n$-planes.

\begin{cor}
	\label{cor:non-graphical-ex}
	There exist almost isoclinic Lagrangian surfaces in $\bb C^n$ that are complete and non-graphical over $e^{i\varphi}\bb R^n$ for any $\varphi\in\bb R.$
\end{cor}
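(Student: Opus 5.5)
We will use the $SO(n)$-symmetric construction of Proposition~\ref{prop:SOn-general-example}, choosing the profile curve so that the Lagrangian angle winds far enough to rule out graphicality over every rotated plane. Concretely, we take $n\ge 2$ and pick $\rho,\phi$ on $[a,b]$ with $\rho>0$, $\rho'>0$, and $\phi(b)-\phi(a)>\pi$, for instance $\rho(r)=r$ and $\phi(r)=\kappa r$ with $\kappa(b-a)>\pi$, producing a spiral. Proposition~\ref{prop:SOn-general-example} then yields a complete, properly embedded, almost isoclinic Lagrangian $\Sigma$ containing the piece $F([a,b]\times\mathbb S^{n-1})$. Almost isoclinicity and completeness come for free from that proposition, so the only remaining task is non-graphicality.

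To show $\Sigma$ is not graphical over $e^{i\varphi}\mathbb R^n$, it suffices to find a point $p$ with $T_p\Sigma\notin\mathcal O_\varphi$, i.e.\ with $X_\varphi$ singular for some representative. We use the representative $\td X,\td Y$ from the proof of Proposition~\ref{prop:AI-sym}; rotating by $U^T\in SO(n)$ and using a non-orthonormal basis does not affect invertibility. Formula~\eqref{eq:detX} gives
\begin{align*}
\det\td X_\varphi=\pr{\rho'\cos(\varphi-\phi)+\rho\phi'\sin(\varphi-\phi)}\cdot\pr{\rho\cos(\varphi-\phi)}^{n-1}.
\end{align*}
For $n\ge 2$ this vanishes whenever $\cos(\varphi-\phi(r))=0$, that is, when $\phi(r)\equiv\varphi+\pi/2 \pmod \pi$. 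Because $\phi$ is continuous on $[a,b]$ and its range has length greater than $\pi$, for every $\varphi$ the intermediate value theorem provides some $r\in[a,b]$ at which this holds. Hence for each $\varphi$ there is a tangent plane of $\Sigma$ lying outside $\mathcal O_\varphi$, so $\Sigma$ is not graphical over $e^{i\varphi}\mathbb R^n$. This is a pointwise tangent obstruction, which is stronger than a failure of global graphicality.

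The statement says ``surfaces in $\mathbb C^n$''. We read this as $n$-dimensional Lagrangians in $\mathbb C^n$, and for $n=2$ they are surfaces. The step we expect to be most delicate is checking that the extension in Proposition~\ref{prop:SOn-general-example} preserves smoothness and $\rho'>0$ when $\phi$ has large total variation. This is only a matter of interpolating $\phi$ to constants at both ends while keeping $\tilde\rho$ strictly increasing, which is unconstrained by $\phi$, so no real obstruction arises.
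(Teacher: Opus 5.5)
Your proof is correct and follows the paper's route: an $SO(n)$-symmetric Lagrangian with a spiralling profile curve, where non-graphicality over $e^{i\varphi}\mathbb{R}^n$ comes from the vanishing of the factor $(x\cos\varphi+y\sin\varphi)^{n-1}=(\rho\cos(\varphi-\phi))^{n-1}$ in \eqref{eq:detX}. The only difference is in the choice of profile: the paper takes $\phi(r)=\eta(r)\,r$, which winds indefinitely, and picks $r_\varphi\ge 2$ explicitly, whereas your profile winds by more than $\pi$ and then becomes constant, so it satisfies the hypotheses of Proposition~\ref{prop:SOn-general-example} exactly (the paper's choice is not constant at infinity, though completeness still holds there since $\rho=r$).
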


\begin{proof}
	We can just take $F$ of the form \eqref{F-sym-def} for $x(r), y(r)$ to be determined. For any $\varphi\in\bb R,$ to check the graphicality of $F$ over $e^{i\varphi}\bb R^n,$ as in \eqref{eq:detX}, Lemma~\ref{aI+buu^T} implies
	\begin{align*}
	\det \pr{\cos\varphi \cdot \td X + \sin\varphi\cdot \td Y}
    =\pr{x'\cos\varphi+ y'\sin\varphi} \cdot \pr{x\cos\varphi+y\sin\varphi}^{n-1}.
	\end{align*}
    Consider a cutoff function $\eta\colon[0,\infty)\to\bb R_{\ge 0}$ such that $\eta$ is non-decreasing and
	\begin{align*}
	\eta(r)=\begin{cases}
	1&\text{for }r\ge 2\\
	0&\text{for }r\le 1
	\end{cases}.
	\end{align*}
	Using this, we take
	\begin{align*}
	x(r):=r \cos (\eta(r)r)\text{ and }
	y(r):=r\sin (\eta(r) r). 
	\end{align*}
	These functions satisfy all the properties required in the proof of Proposition~\ref{prop:SOn-general-example}.
	As a result, they define an almost isoclinic ${SO}(n)$-symmetric Lagrangian embedding $F$ as in \eqref{F-sym-def}. For $r\geq 2$, \begin{align*}
	\det \pr{\cos\varphi \cdot \td X + \sin\varphi\cdot \td Y}
    =\pr{x'\cos\varphi+ y'\sin\varphi} \cdot r^{n-1} \pr{\cos(r-\varphi)}^{n-1}.
	\end{align*}
	Hence, for any $\varphi\in\bb R,$ there exists $r_\varphi\ge 2$ such that $r_\varphi-\varphi-\pi/2$ is a multiple of $2\pi.$ 
	For this $r_\varphi,$ we get
	\begin{align*}
	\det \pr{\cos\varphi \cdot X + \sin\varphi\cdot Y}(r_\varphi,\theta)=0.
	\end{align*}
	Thus, $F$ is not graphical over $e^{i\varphi}\bb R^n$ at $\pr{r_\varphi,\theta}.$
\end{proof}

\begin{figure}[h]
	\centering
	\includegraphics[width=6cm]{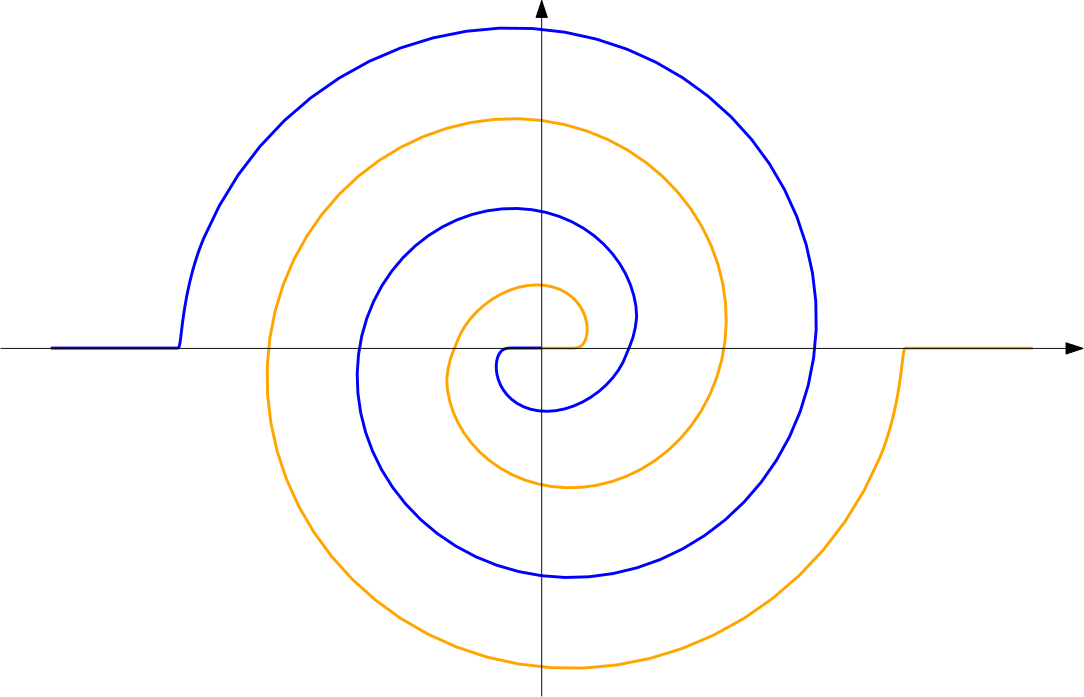}
	\caption{
    The orange curve is an example constructed following Proposition~\ref{prop:SOn-general-example}.
    The function $\td \rho(r)$ is simply $r$ and a small positive number $\varepsilon$ is chosen such that 
    $\td\phi(r)=r$ for $r\in [\varepsilon, 4\pi-\varepsilon]$ and $\td\phi|_{[0,\infty)\setminus[\varepsilon/2, 4\pi-\varepsilon/2]}=0.$
    A direct calculation shows that for this example, the projection of the Gauss map onto the self-dual spherical part in onto to the equator indicated in Figure~\ref{fig:Gauss}.
    }
	\label{fig:spiral}
\end{figure}

As in Figure~\ref{fig:spiral}, the examples found in Corollary~\ref{cor:non-graphical-ex} are diffeomorphic to $\bb R^n.$
It will be interesting to ask for properly embedded complete almost isoclinic Lagrangians with other topological types.

\subsection{Lawlor necks}
\label{sec:Lawlor-neck}

In this section, we study the almost isoclinicity property for {\bf Lawlor necks}, a special class of asymptotically conical special Lagrangians constructed by Lawlor~\cite{L89}.
We will show that none of them is almost isoclinic.

Let $n\ge 2$ and $a_1,\cdots,a_n$ be positive numbers.
Consider the polynomial
\begin{align*}
P(y)
:= \frac 1{y^2}\pr{\prod_{k=1}^n(1+a_ky^2)-1}.
\end{align*}
Using this polynomial, we define
\begin{align*}
\phi_k(r)
&:= \int_{-\infty}^r \frac{a_k}{\pr{1+a_ky^2}\sqrt{P(y)}}dy\,
\text{ and }\,
\rho_k(r)
:= \sqrt{\frac 1{a_k} + r^2}
\end{align*}
for $r\in\bb R.$
These data lead to a curve $Z=\set{\pr{z_1(r),\cdots,z_n(r)}:r\in\bb R}$ in $\bb C^n\simeq \bb R^{2n}$ where 
$$z_k(r) := \rho_k(r) e^{i\phi_k(r)}.$$
Rotating it, we get a map $F\colon \bb R\times \mathbb S^{n-1}\to\bb R^{2n}$ defined by
\begin{align*}
F(r,u)
= F(r,u_1,\cdots,u_n)
= \pr{z_1(r)u_1,\cdots,z_n(r)u_n}
\in \bb C^n\simeq \bb R^{2n}.
\end{align*}
The image of this immersion
\begin{align*}
L_a
:=\set{\pr{z_1(r)u_1,\cdots,z_n(r)u_n}: r\in\bb R\text{ and }(u_1,\cdots,u_n)\in \mathbb S^{n-1}}
\end{align*} 
is called the Lawlor neck associated to the parameter $a=(a_1,\cdots,a_n).$

\begin{prop}
	\label{prop:L-not-AI-n>2}
	Let $n\ge 2$ and $F\colon \bb R\times \mathbb S^{n-1}\to L_a$ be the Lawlor neck with parameter $a=(a_1,\cdots,a_n).$
	If we let $\Lambda_a$ be the function $\Lambda$ defined on $L_a,$ then there exists $(r,u)\in \bb R\times \mathbb S^{n-1}$ such that $\Lambda_a(r,u)=0.$
	In particular, $L_a$ is not almost isoclinic.
\end{prop}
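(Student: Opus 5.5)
The plan is to compute $\Lambda_a$ explicitly at points of the form $(r,e_m)$, where $e_m$ is a standard basis vector of $\mathbb{R}^n$. We then find two points of the connected manifold $\bb R\times\mathbb S^{n-1}$ where $\Lambda_a$ has opposite signs, or where one factor vanishes outright. Continuity of $\Lambda_a$ then gives a zero.

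\textbf{Step 1: a product formula at $(r,e_m)$.} By Lemma~\ref{Lambda_general}, the sign of $\Lambda_a$ agrees with that of $\det(\wedge^2A+\wedge^2B)$ for any oriented basis $A+iB$ of the tangent plane. At $u=e_m$ I choose the tangent frame $F_r,\bd_{j}F$ ($j\neq m$), with $\bd_ju=e_j$ up to ordering and sign conventions fixed by orientation. Then $F_r=z_m'(r)e_m$ and $\bd_jF=z_j(r)e_j$, so the complex basis matrix is diagonal, $\mathrm{diag}(w_1,\dots,w_n)$ with $w_m=z_m'$ and $w_j=z_j$ for $j\ne m$. Consequently $\wedge^2A+\wedge^2B$ is diagonal on $e_j\wedge e_k$ with entries $\operatorname{Re}(w_j\bar w_k)$. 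Up to a positive factor (and a global orientation sign to be tracked once), this gives
\[
\Lambda_a(r,e_m)\ \sim\ \prod_{k\neq m}\operatorname{Re}\bigl(z_m'\bar z_k\bigr)\cdot\prod_{\substack{j<k\\ j,k\neq m}}\rho_j\rho_k\cos(\phi_j-\phi_k).
\]

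\textbf{Step 2: evaluate at $r=0$.} Here $\rho_k'(0)=0$, so $z_m'(0)=i\rho_m\phi_m'(0)e^{i\phi_m(0)}$. Hence
\[
\operatorname{Re}(z_m'\bar z_k)=-\rho_m\rho_k\phi_m'(0)\sin\bigl(\phi_m(0)-\phi_k(0)\bigr).
\]
By the evenness of the integrand, $\phi_k(0)=\bar\phi_k/2$, where $\bar\phi_k=\phi_k(\infty)\in(0,\pi)$ and $\sum_k\bar\phi_k=\pi$ (Lawlor's normalization). All the differences $\phi_j(0)-\phi_k(0)$ then lie in $(-\pi/2,\pi/2)$, so every cosine factor is positive.
\begin{itemize}
\item If two of the $\bar\phi_k$ coincide (e.g.\ $a_m=a_k$), some sine factor vanishes and $\Lambda_a(0,e_m)=0$ directly.
\item Otherwise, take $m$ with $\bar\phi_m$ maximal (equivalently $a_m$ maximal). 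Then all $n-1$ sine factors are positive and $\operatorname{sign}\Lambda_a(0,e_m)=(-1)^{n-1}$.
\item Taking instead $m$ with $\bar\phi_m$ minimal, all sine factors are negative and the sign is $+$.
\end{itemize}
For $n$ even these two signs differ, and a path in $\{0\}\times\mathbb S^{n-1}$ joining the two points produces a zero.

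\textbf{Step 3, the expected main obstacle: $n$ odd.} For odd $n$ the two signs at $r=0$ coincide, so I would look elsewhere. First I would examine the asymptotic ends.
\begin{itemize}
\item As $r\to-\infty$, $\phi_k\to0$ and $z_m'\to-1$, so each factor $\operatorname{Re}(z_m'\bar z_k)\approx-|r|$ and the sign is $(-1)^{n-1}$.
\item As $r\to+\infty$, $z_m'\to e^{i\bar\phi_m}$ and the factors become $r\cos(\bar\phi_m-\bar\phi_k)$.
\end{itemize}
Since $\sum\bar\phi_k=\pi$, one might hope to exhibit a pair with $|\bar\phi_j-\bar\phi_k|$ crossing $\pi/2$, or a factor vanishing somewhere along $r$. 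Failing that, I would move off the coordinate points to $u=(e_m+e_\ell)/\sqrt2$. There the tangent frame is no longer diagonal, but it is block-diagonal with a single $2\times2$ block, so $\det(\wedge^2A+\wedge^2B)$ is still computable. I would try to show that along a suitable curve in $(r,u)$ one factor, such as the $2\times2$ block analog of $\operatorname{Re}(z_m'\bar z_\ell)$, is forced to change sign while the others keep theirs.

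Finally, the conclusion $L_a\notin$ almost isoclinic follows because almost isoclinic planes have $\Lambda>0$, by Definition~\ref{def:AI} together with Lemmas~\ref{lem:detX>0}, \ref{lem:Lambda-in-terms-of-lambda} and~\ref{lem:Lambda-rot-inv}.
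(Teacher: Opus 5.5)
Your Steps 1 and 2 are correct, and they do give a zero of $\Lambda_a$ when $n$ is even or when two of the $a_k$ coincide. For $n$ odd with pairwise distinct $a_k$, however, there is a genuine gap, because every sign you compute is the same. At $(0,e_m)$ you get $+$ for both the maximal and the minimal $m$. As $r\to-\infty$ you get $(-1)^{n-1}=+$. As $r\to+\infty$ the product tends to $\prod_{j<k}\cos(\bar\phi_j-\bar\phi_k)$, which is positive whenever all $|\bar\phi_j-\bar\phi_k|<\pi/2$; this happens, for example, for $n=3$ with the $\bar\phi_k$ distinct but close to $\pi/3$. So the hope of a difference ``crossing $\pi/2$'' fails in general. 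The computation at $u=(e_m+e_\ell)/\sqrt2$ is only sketched, so Step 3 is not a proof.

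The missing idea is to apply the intermediate value theorem to a single factor instead of to the product. $\Lambda_a(r,e_m)$ vanishes as soon as any one factor vanishes. Take $m$ with $a_m$ (equivalently $\phi_m$) minimal, and fix any $k\neq m$.
\begin{itemize}
\item By your own asymptotics, $\operatorname{Re}(z_m'\bar z_k)<0$ for $r\ll0$.
\item By your own computation at $r=0$,
\[
\operatorname{Re}(z_m'\bar z_k)(0)=\rho_m\rho_k\phi_m'(0)\sin\bigl(\phi_k(0)-\phi_m(0)\bigr)\ge0 .
\]
\end{itemize}
Hence this factor vanishes at some $r_k\le0$, and $\Lambda_a(r_k,e_m)=0$.

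This is exactly the paper's argument, where $\Xi_\ell$ is the normalized version of $\operatorname{Re}(z_1'\bar z_\ell)$. It works uniformly in $n$ and covers coincident angles automatically through the non-strict inequality at $r=0$. So the parity split, the coincidence case, and the off-axis computation are all unnecessary. It also explains your obstacle: for odd $n$ each of the $n-1$ factors changes sign on $(-\infty,0]$, an even number of sign changes, so the product keeps its sign even though $\Lambda_a$ passes through zero.
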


When the parameters $a_k$'s are all the same, the associated Lawlor neck is ${SO}(n)$-symmetric in the sense of \eqref{F-sym-def}.
This special class of Lawlor necks are called {\bf Lagrangian catenoids} and were first constructed in~\cite{HL}.
From Proposition~\ref{prop:AI-sym}, it is straightforward to check that such Lagrangian catenoids are not almost isoclinic since the defining function $\rho$ is not monotone.
Proposition~\ref{prop:L-not-AI-n>2} says that the whole family of Lawlor necks are also not almost isoclinic.

\begin{proof}
	Suppose we are given $a=(a_1,\cdots,a_n).$
	We will show that the function $\Lambda=\Lambda_a$ on $L_a$ has a zero, and hence $L_a$ is not almost isoclinic.
	
	Consider the following parametrization
	\begin{align*}
	F(r,u)
	= F(r,u_1,\cdots,u_n)
	= \pr{x_1\,u_1,
		\cdots ,
		x_n\,u_n,
		y_1\,u_1,
		\cdots,
		y_n\,u_n}
	\end{align*}
	where $x_k=\rho_k\cos\phi_k,$ $y_k=\rho_k\sin\phi_k,$ and $u=(u_1,\cdots,u_n)\in \mathbb S^{n-1}.$
	We assume that the embedding $u\colon \mathbb S^{n-1}\to\bb R^n$ is parametrized by $(n-1)$ variables $(\theta_2,\cdots,\theta_n)$ such that when $u=(1,0,\cdots,0),$
	\begin{align*}
	\bd_ju:=\bd_{\theta_j}u=e_j
	\end{align*}
	where $e_j$'s form the standard basis of $\bb R^n.$
	At such a point, the tangent space is spanned by $\begin{pmatrix}X\\Y
	    \end{pmatrix}$ where 
	\begin{align*}
	X={\rm diag}\pr{x_1', x_2,\cdots,x_n}
	\text{ and  }
	Y={\rm diag}\pr{y_1', y_2,\cdots,y_n}
	\end{align*}
	It then follows that $X^TX+Y^TY={\rm diag}\pr{(x_1')^2+(y_1')^2, (x_2)^2+(y_2)^2,\cdots,(x_n)^2+(y_n)^2}$ at $(r,u)=(r,1,0,\cdots,0)$ for any $r\in\bb R$. We can therefore normalize to $\begin{pmatrix}\td X\\\td Y
	    \end{pmatrix}$ such that $\td X+i\td Y\in U(n)$.
	
	We start evaluating $\Lambda$ at $(r,u)=(r,1,0,\cdots,0)$ for any $r\in\bb R.$
	At such a point, both $\wedge^2 \td X$ and $\wedge^2\td Y$ are diagonal and their sum is 
	\begin{align*}
	\wedge^2 \td X + \wedge^2 \td Y
	=&{\rm diag}\pr{
		\pr{\frac{x_1'x_\ell + y_1'y_\ell}
			{\sqrt{\pr{x_1'}^2 + \pr{y_1'}^2}\sqrt{x_\ell^2+y_\ell^2}}
		}_{\ell\ge 2},
		\pr{\frac{x_jx_k + y_jy_k}
			{\sqrt{x_j^2+y_j^2}\sqrt{x_k^2+y_k^2}}}_{2\le j<k<n}
	}.
	\end{align*}
    We note that the first expression on the right hand side is exactly the cosine of the angle between the two 2-vectors $(x_1', y_1')$ and $(x_\ell, y_\ell)$, while the second expression is the cosine of the angle between the two 2-vectors $(x_j, y_j)$ and $(x_k, y_k)$.
	Recall that $x_k+iy_k=\rho_ke^{i\phi_k}$.  
    Noting that
	\begin{align*}
	\alpha:=\rho_k\rho_k'=r\text{ and }
	\beta:=\rho_k^2\phi_k'
	= \frac 1{\sqrt P}
	\end{align*}
	are independent of $k$, we have 
    \[x'_k+iy'_k=\frac{1}{\rho_k}(\alpha+i\beta)e^{i\phi_k}.\]
	These imply
	\begin{align*}
	\Xi_\ell
    :=\frac{x_1'x_\ell + y_1'y_\ell}
	{\sqrt{\pr{x_1'}^2 + \pr{y_1'}^2}\sqrt{x_\ell^2+y_\ell^2}}
	& = 	\frac\alpha{\sqrt{\alpha^2+\beta^2}} \cos(\phi_\ell-\phi_1) + \frac\beta{\sqrt{\alpha^2+\beta^2}} \sin(\phi_\ell-\phi_1)
	\end{align*}
	for $\ell\ge 2$ and
	\begin{align*}
	\frac{x_jx_k + y_jy_k}
	{\sqrt{x_j^2+y_j^2}\sqrt{x_k^2+y_k^2}}
	& = \cos\pr{\phi_k-\phi_j}
	\end{align*}
	for $2\le j<k.$
	This information completely determines $\Lambda$ at $(r,u)=(r,1,0,\cdots,0).$
	
	Finally, we will show that $\Lambda=\det\pr{\wedge^2X + \wedge^2Y}$ has a zero.
	Note that for any $j$ and $k,$ by the definition of $\phi_j,$
	\begin{align*}
	\lim_{r\to-\infty}\pr{\phi_j(r)-\phi_k(r)} 
	= \lim_{r\to-\infty}\phi_j(r)
	- \lim_{r\to-\infty}\phi_k(r)
	=0-0=0.
	\end{align*} 
	Thus, if we fix some $\ell=2,\cdots,n,$ for $r$ close enough to $-\infty$,
	$\Xi_\ell$ will be negative since $\alpha(r)\to-\infty$ and $\beta(r) = \frac 1{\sqrt{P(r)}}\to 0$ as $r\to-\infty.$
	On the other hand, when $r=0,$ we have $\Xi_\ell\ge 0$
	if we assume $\phi_1$ is the smallest angle among $\phi_k$'s.
	Thus, there exists $r_\ell \le 0$ such that $\Xi_\ell(r_\ell,1,0,\cdots,0)=0.$
	Therefore, we get
	\begin{align*}
	\Lambda(r_\ell,1,0,\cdots,0)
	= \prod_{\ell=2}^n 
	\frac{x_1'x_\ell + y_1'y_\ell}
	{\sqrt{\pr{x_1'}^2 + \pr{y_1'}^2}\sqrt{x_\ell^2+y_\ell^2}}
	\cdot \prod_{2\le j<k}
	\frac{x_jx_k + y_jy_k}
	{\sqrt{x_j^2+y_j^2}\sqrt{x_k^2+y_k^2}}
	= 0.
	\end{align*}
	In particular, this proves that $L_a$ is not almost isoclinic.
\end{proof}

Although Lawlor necks are not globally almost isoclinic, changing the reference base $n$-plane will allow one to identify almost isoclinic ends on them.

\section{\bf Proof of the log-concavity property}
\label{sec:proof-Lambda}

In this section, we will prove Proposition~\ref{prop:Hess-n-dim}.
If not specified, every sum $\sum\limits_j$ is over $j=1,\cdots,n.$
Every sum like $\sum\limits_{j,k}$ or $\sum\limits_{j<k}$ is under the same restriction, that is, for $1\le j,k\le n$ or $1\le j<k\le n$ respectively.

In the rest of the section, we will assume that in a graph chart based on $L_0\in{\rm Lag}^+(n),$ an $n$-plane $L_S$ is given by the graph of a symmetric matrix $S$ after we fix a basis of $L_0.$
We assume $L_S\in\mathcal{AI}(n)$ and $S={\rm diag}(\lambda_1,\cdots,\lambda_n)$ in the chosen basis, with respect to which the space $T_{L_S}{\rm Lag}^+(n)$ is isomorphic to the space of $n\times n$  symmetric matrices.
We always use the matrix representation $A=[\alpha]_{L_0}$ to express an element $\alpha\in T_{L_S}{\rm Lag}^+(n)$ in this section unless otherwise specified.

We start from simple properties of the Grassmannian metric.

\begin{lemma} 
	\label{lem:basic-prop}
	Suppose $L_S\in{\rm Lag}^+(n)$ is represented by an $n\times n$ matrix $S$ and suppose $A, B$ are symmetric matrices in the tangent space $T_S{\rm Lag}^+(n).$
	Denote
	$G = (I + S^2)^{-1}.$
	Then the Grassmannian metric is
	\begin{align}\label{metric-on-Lag}
	\langle A, B \rangle = \operatorname{tr}(G A G B),
	\end{align}
	and the Levi-Civita connection of the metric is
	\begin{align}\label{nabla-on-Lag}
	\nabla_A B = - A S G B - B S G A.
	\end{align}
	Moreover, the Hessian of a function $F\colon {\rm Lag}^+(n)\to\bb R$ at $S$ is
	\begin{align}\label{Hess-on-Lag}
	\Hess F(S) (A, B) = D_A(D_B(F))(S) - 
	D_{\nabla_A B}(F)(S),
	\end{align}
	where $D_A(F)(S)=\bd_t F(S+tA) |_{t=0}$ is the directional derivative. 
\end{lemma}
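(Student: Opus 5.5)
The plan is to derive all three formulas from the graph chart description of ${\rm Lag}^+(n)$. In this chart a point is a symmetric matrix $S$, a tangent vector is a symmetric matrix $A$, and the graph $L_S$ is spanned by the columns of $\begin{pmatrix} I\\ S\end{pmatrix}$, using the basis of $L_0$ and $JL_0$.

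\textbf{The metric.} For the variation $S+tA$, the velocity of the graph is the map $x+JSx\mapsto JAx$. Its component in ${\rm Hom}(L_S,L_S^\perp)$ is obtained by projecting $JAx$ onto $L_S^\perp=J L_S$, or equivalently by taking the normal part modulo $L_S$. The vectors $(I+S^2)^{-1/2}$ applied to $\begin{pmatrix} I\\ S\end{pmatrix}$ give an orthonormal frame of $L_S$, and the analogous frame $J$-applied gives one of $JL_S$. In these frames the velocity is represented by $(I+S^2)^{-1/2}A(I+S^2)^{-1/2}$; this is exactly the relation \eqref{matrix-in-graph-chart}. Substituting into \eqref{Gr-metric} gives $\tr(GAGB)$ with $G=(I+S^2)^{-1}$. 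This is the standard formula for the Grassmannian metric in affine charts, $\tr\big((I+S^TS)^{-1}A^T(I+SS^T)^{-1}B\big)$, specialized to symmetric $S$.

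\textbf{The connection.} The chart metric is $g_S(A,B)=\tr(GAGB)$, with constant coordinate fields $A,B,C$. The Levi-Civita connection follows from the Koszul formula
\[
2g(\nabla_AB,C)=D_Ag(B,C)+D_Bg(A,C)-D_Cg(A,B).
\]
The only derivative needed is $D_AG=-G(AS+SA)G$. Expanding each term with the cyclicity of the trace and the symmetry of $A,B,C,S,G$, the derivatives collect into traces of the form $\tr(G\,X\,G\,C)$. We then solve for $X=\nabla_AB$. The candidate $-ASGB-BSGA$ is not symmetric, so it has to be read through its pairing with symmetric $C$, or we check that it is symmetric up to terms that pair to zero. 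Concretely, $G$ and $S$ commute, so we verify the identity
\[
\tr\big(G(ASGB+BSGA)GC\big)=-\tfrac12\big(D_Ag(B,C)+D_Bg(A,C)-D_Cg(A,B)\big)
\]
by expanding both sides into the six cyclic words in $A,B,C$ with interspersed $SG$ and $G$ factors and matching them.

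\textbf{The Hessian.} Formula \eqref{Hess-on-Lag} is the coordinate definition $\Hess F(A,B)=A(BF)-(\nabla_AB)F$ for constant coordinate vector fields. So the chart computation reduces to the two items above.

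\textbf{Main obstacle.} The main difficulty is the Koszul bookkeeping. The terms with $C$ differentiated must cancel against the two others so that exactly $-ASGB-BSGA$ survives, and the asymmetry of this expression must be handled correctly. As a cross-check I would verify the formula at $S=0$, where $G=I$, $\nabla=0$ and $D_Ag=0$, and then use $U(n)$-homogeneity as a consistency check.
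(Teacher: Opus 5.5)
Your proposal follows the paper's proof essentially step for step: the metric is read off from the relation $[\alpha]_{L_S}=G^{1/2}[\alpha]_{L_0}G^{1/2}$, the connection comes from the Koszul formula with constant coordinate fields and $D_AG=-G(AS+SA)G$, and the Hessian formula is the definition. One small correction: $-ASGB-BSGA$ is in fact symmetric, because $(ASGB)^T=BGSA=BSGA$ (as $G$ and $S$ commute), so reading off $\nabla_AB$ from its pairing with all symmetric $C$ needs no special handling of asymmetry.
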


\begin{proof}
	First, we look at the metric \eqref{metric-on-Lag}. 
	As mentioned above, $A$ and $B$ represent two elements $\alpha$ and $\beta$ in $T_{L_S}{\rm Lag}^+(n)$ by one of the matrix representations $[\alpha]_{L_0}=A$ and $[\beta]_{L_0}=B.$
	As we are evaluating the metric on $T_{L_S}{\rm Lag}^+(n),$ when applying the Grassmannian metric~\eqref{Gr-metric}, we need to use $[\alpha]_{L_S}$ and $[\beta]_{L_S}.$
	To this end, we apply formula~\eqref{matrix-in-graph-chart} to get
	\begin{align*}
	\pair{A,B}_{\rm Gr}
	:= \pair{\alpha,\beta}_{g_S}
	= \tr \pr{[\alpha]_{L_S}^T [\beta]_{L_S}}
	= \tr \pr{G^{1/2}AG^{1/2} \cdot G^{1/2}BG^{1/2}}
	= \tr\, GAGB.
	\end{align*}
	This proves~\eqref{metric-on-Lag}.
	From now on, we will write $\pair{A,B}_{\rm Gr}
	= \pair{A,B}$ when it is clear.

	Next, we calculate the Levi-Civita connection \eqref{nabla-on-Lag}.
	The Levi-Civita connection $\n$ satisfies
	\begin{align}\label{connection-def}
	2\, \pair{\n_AB, C}
	= \pr{
		D_A\pair{B,C}
		+ D_B\pair{C,A}
		- D_C\pair{A,B}
	}
	\end{align}
	for any symmetric matrices $A,B,$ and $C$ representing three tangent vectors in $T_{L_S}{\rm Lag}^+(n).$
	Since $G\pr{1+S^2}=1,$ differentiation in $A$ implies
	$D_AG\pr{1+S^2}
	+ G\pr{SA+AS}
	= 0,$
	so
	\begin{align}\label{DG}
	D_AG
	= - G\pr{SA+AS} \pr{1+S^2}^{-1}
	= - G\pr{SA+AS} G.
	\end{align}
	As a consequence,
	\begin{align*}
	D_A\pair{B,C}
	= D_A\pr{\tr\, GBGC}
	& = \tr \pr{D_AG}BGC
	+ \tr\, GB\pr{D_AG}C\\
	& = - \tr\, GSAGBGC
	- \tr\, GASGBGC
	- \tr\, GBGSAGC
	- \tr\, GBGASGC.
	\end{align*}
	Similarly, we have
	\begin{align*}
	D_B\pair{C,A}
	&= - \tr\, GSBGCGA
	- \tr\, GBSGCGA 
	- \tr\, GCGSBGA
	- \tr\, GCGBSGA 
	\text{ and}\\
	D_C\pair{A,B}
	&= - \tr\, GSCGAGB
	- \tr\, GCSGAGB 
	- \tr\, GAGSCGB
	- \tr\, GAGCSGB.
	\end{align*}
	Thus, \eqref{connection-def} implies
	\begin{align*}
	2\tr \pr{G\,\n_AB\,GC}
	= & -\tr(GSAGBGC + GASGBGC+GBGSAGC+GBGASGC)\\
	& - \tr(GSBGCGA+GBSGCGA+GCGSBGA+GCGBSGA)\\
	& + \tr(GSCGAGB+GCSGAGB+GAGSCGB+GAGCSGB)\\
	= & -\tr(\udl{GSAGBGC} + GASGBGC+GBGSAGC+\udl{GBGASGC})\\
	& - \tr(\udl{GSBGAGC}+GBSGAGC+GAGSBGC+\udl{GAGBSGC})\\
	& + \tr(\udl{GAGBSGC+GSAGBGC+GBGASGC+GSBGAGC})\\
	= & -\tr \pr{G\pr{ASGB + BGSA + BSGA + AGSB}GC},
	\end{align*}
	where we freely use the relation $GS=SG$ and the formula $\tr XY=\tr YX$ to notice that the eight underlined terms cancel.
	Since the equality above holds for any $C,$ we finally get
	\begin{align*}
	\n_A B 
	= -\frac 12(ASGB + BGSA + BSGA + AGSB)
	= - ASGB - BGSA
	\end{align*}
	and hence \eqref{nabla-on-Lag} follows.
	Finally, \eqref{Hess-on-Lag} follows from definition.
\end{proof}

With the basic rules in Lemma~\ref{lem:basic-prop}, we can directly evaluate the Hessian of a function defined on ${\rm Lag}^+(n).$
In the following, we do this for the two functions involved in $\Lambda.$

\begin{lemma}
	\label{lem:first-expansion}
	Suppose $L_S\in{\rm Lag}^+(n)$ is represented by an $n\times n$ matrix $S$ and suppose $A$ is a symmetric matrix in the tangent space $T_{L_S}{\rm Lag}^+(n).$ 
	Denote
	$G = (I + S^2)^{-1}
	\text{ and }
	M = (I + \wedge^2 S)^{-1}.$
	Then the Hessians of the functions $\log \det (I + S^2)$ and $\log \det (I + \wedge^2 S)$ on $\mathcal{AI}(n)$ with respect to the Grassmannian metric are
	\begin{align}\label{Hess-Omega}
	\Hess \pr{\log \det (I + S^2)} (A, A)
	= 2 \operatorname{tr}(G A G A) + 2 \operatorname{tr}(G S A G S A)
	\end{align}
	and
	\begin{equation}\label{Hess-Lambda-first}
	\begin{split}
	\Hess \pr{\log \det (I+\wedge^2 S}(A,A)
	= & \, 2\tr \pr{M (A \wedge A) }\\
	& - \tr\pr{M (A \wedge S + S \wedge A) \, M (A \wedge S + S \wedge A) }\\
	& + 2\tr\pr{M \pr{S\wedge ASGA + ASGA \wedge S}}.
	\end{split}
	\end{equation}
\end{lemma}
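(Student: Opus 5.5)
Both formulas follow from \eqref{Hess-on-Lag} in Lemma~\ref{lem:basic-prop}: for $F(S)$ defined on the chart, $\Hess F(A,A)=D_A D_A F - D_{\n_A A}F$, with $\n_A A=-2ASGA$ by \eqref{nabla-on-Lag}. So for each function I compute the first and second directional derivatives along $S+tA$ and then subtract the first derivative in the direction $-2ASGA$. Throughout I use that $G$ commutes with $S$, the cyclicity of the trace, and the standard identities $D_X\log\det P=\tr(P^{-1}D_XP)$ and $D_X P^{-1}=-P^{-1}(D_XP)P^{-1}$.

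For \eqref{Hess-Omega}, set $P=I+S^2$, so $P^{-1}=G$ and $D_XP=SX+XS$. Then $D_X\log\det P=\tr(G(SX+XS))=2\tr(GSX)$. Differentiating again along $A$ (using \eqref{DG} for $D_AG$) gives
\[
D_AD_A\log\det P=2\tr(GA^2)-2\tr(G(SA+AS)GSA).
\]
The connection term is $-D_{-2ASGA}=+4\tr(GSASGA)$. Writing $GA^2=GA(I+S^2)GA$ turns $2\tr(GA^2)$ into $2\tr(GAGA)+2\tr(GAS^2GA)$. Expanding everything with $GS=SG$, the $\tr(GSAGSA)$ and $\tr(GASGAS)$ type terms combine, and the result should collapse to $2\tr(GAGA)+2\tr(GSAGSA)$. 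This is routine bookkeeping; I will track the terms according to where $S$ sits.

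For \eqref{Hess-Lambda-first}, set $Q=I+\wedge^2S$ with $M=Q^{-1}$, and use the convention $A\wedge S$ for the derivation-type operator $u\wedge v\mapsto Au\wedge Sv$. Then:
\begin{itemize}
\item $D_X(\wedge^2 S)=X\wedge S+S\wedge X$;
\item $D_AD_A(\wedge^2S)=2A\wedge A$, which is nonzero because $\wedge^2$ is quadratic;
\item $D_X\log\det Q=\tr(M(X\wedge S+S\wedge X))$;
\item $D_AD_A\log\det Q=2\tr(M(A\wedge A))-\tr(M(A\wedge S+S\wedge A)M(A\wedge S+S\wedge A))$.
\end{itemize}
The connection term contributes $-D_{-2ASGA}\log\det Q=2\tr(M(ASGA\wedge S+S\wedge ASGA))$. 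Together these give exactly \eqref{Hess-Lambda-first}. The restriction to $\mathcal{AI}(n)$ enters only to guarantee that $I+\wedge^2S$ is invertible, which holds because its eigenvalues are $1+\lambda_j\lambda_k>0$.

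The only real obstacle is keeping the operator conventions straight: $\wedge^2$ applied to a sum versus the mixed operator $A\wedge S$, and symmetrizing $A\wedge S$ so that the derivative of $\wedge^2$ is correct. I will check this first on decomposables, $\frac{d}{dt}(S+tA)u\wedge(S+tA)v$.
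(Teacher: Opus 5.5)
Your proposal is correct and follows the paper's proof essentially line by line. You differentiate $\log\det$ twice along $S+tA$ using \eqref{DG}, subtract the first derivative in the direction $\nabla_AA=-2ASGA$ from \eqref{nabla-on-Lag}, and in the first formula cancel the $\tr(GAS^2GA)$ terms via $(I+S^2)G=I$ (the paper phrases this as $I-S^2G=G$); your treatment of the $\wedge^2$ case, with $D_AD_A(\wedge^2S)=2A\wedge A$ and the symmetrized operator $A\wedge S+S\wedge A$, likewise matches the paper's computation.
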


\begin{proof}
	For simplicity, we write
	$F^\Omega(S)
	:= \log \det\pr{1+S^2}
	\text{ and }
	F^\Lambda(S):=\log \det\pr{1+{\wedge^2 S}}.$
	
	First, we prove \eqref{Hess-Omega}.
	Through a variation $S+tA,$ we get
	\begin{align*}
	D_AF^\Omega(S)
	= \bd_t \log \det(1+ S^2+tSA+tAS+t^2A^2)\big|_{t=0}
	& = \tr((1+S^2)^{-1}(SA+AS))
	= 2 \tr((1+S^2)^{-1}SA).
	\end{align*}
	Thus, using \eqref{nabla-on-Lag} and \eqref{DG}, we obtain
	\begin{align*}
	\Hess{F^\Omega}(S)(A,A) 
	= & D_A\pr{ 2 \tr (1+S^2)^{-1}SA}
	- D_{\nabla_A A} F^\Omega(S)\\
	= & 2\pr{
		\tr (1 + S^2)^{-1} A A
		- \tr(1 + S^2)^{-1} (S A + A S) (1 + S^2)^{-1} S A}	 \\
	& +  2\tr((1+S^2)^{-1}S
	(ASGA + AGSA)
	)\\
	= & 2\pr{
		\tr\,GAA-\tr\,G(SA+AS)GSA
	} +  4\,\tr\,GSAGSA\\
	= & 2\,\tr\, GA^2 
	- 2\,\tr\, GASGSA
	+ 2\,\tr\,GSAGSA\\
	= & 2\,\tr\, GA\pr{I-S^2G}A + 2\,\tr\,GSBGSA\\
	= & 2\,\tr\, GAGA + 2\,\tr\,GSBGSA
	\end{align*}
	where we use $1-S^2G=G$ in the last equality.
	
	Nest, we prove \eqref{Hess-Lambda-first}.
	Through a variation $S+tA,$ we get
	\begin{align*}
	D_AF^\Lambda(S)
	= \bd_t \log\det\pr{1+\wedge^2S+tS\wedge A + tA\wedge S + t^2 A\wedge A}|_{t=0}
	& = \tr \pr{\pr{1+\wedge^2S}^{-1} (S\wedge A+A\wedge S)}.
	\end{align*}
	Using \eqref{nabla-on-Lag} and \eqref{DG} again, we derive
	\begin{align*}
	\Hess{F^\Lambda}(S)(A,A)
	= & D_A\pr{dF^\Lambda_S(A)} - D_{\nabla_A A} F^\Lambda(S) \\
	= & \tr \pr{ M (A \wedge A + A \wedge A) 
		- M (A \wedge S + S \wedge A) \, M (A \wedge S + S \wedge A) }\\
	& + \tr\pr{M \pr{S\wedge (ASGA + AGSA) + (ASGA + AGSA)\wedge S}}.
	\end{align*}
	This is the same as \eqref{Hess-Lambda-first} and hence finishes the proof of the lemma.
\end{proof}

The function $\Lambda$ at $S$ satisfies
\[\log \Lambda(S)
= \log \det (I+\wedge^2S)-\frac{n-1}{2}\log\det(I+S^2)
.\]
Therefore, from Lemma~\ref{lem:first-expansion}, we can write
\begin{align*}
\Hess\pr{\log \Lambda(S)}(A, A)=T_1+T_2+T_3+T_4
\end{align*} 
where 
\begin{equation}\label{T-1-T-4}
\begin{split}
T_1
&= 2\,\tr\pr{M (A\wedge A)},\\
T_2
&= - \tr \pr{M (A \wedge S + S \wedge A)\, M (A \wedge S + S \wedge A)},\\
T_3
&=2\,\tr\pr{M (S \wedge A S G A + A S G A \wedge S)},\text{ and}\\
T_4
&=-(n-1) \tr (GAGA) -(n-1) \tr (GSAGSA).
\end{split}
\end{equation}
Our strategy is to write each $T_k$ as $T_k'+ T_k''$ and then show $T_1'+T_2'+T_3'+T_4'\leq 0$ and $T_1''+T_2''+T_3''+T_4''\leq 0$ respectively.
The way we decompose $T_k$ to $T_k'+T_k''$ is to separate the terms to diagonal terms and off-diagonal terms in $\wedge^2\bb R^n.$
We will deal with each of them in the following four lemmas.

\begin{lemma}
	[$T_1$'s expansion]
	\label{lem:T1}
	For $A=(a_{jk})_{j,k},$
	\begin{align*}
	T_1
	= 2\,\tr\pr{M (A\wedge A)}
	= 2 \sum_{j<k} \frac 1{1+\lambda_j\lambda_k} 
	\pr{a_{jj}a_{kk} - a_{jk}^2}
	\end{align*}
	and we let $T_1'=T_1$ and $T_1''=0.$
\end{lemma}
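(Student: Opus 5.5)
We need to compute $\tr(M(A\wedge A))$ where $M=(I+\wedge^2 S)^{-1}$ with $S$ diagonal. The plan is to work in the basis $\{\epsilon_j\wedge\epsilon_k\}_{j<k}$ of $\wedge^2\mathbb R^n$ induced by the orthonormal eigenbasis $\{\epsilon_j\}$ of $S$.

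First, since $S=\mathrm{diag}(\lambda_1,\dots,\lambda_n)$, we have $\wedge^2 S(\epsilon_j\wedge\epsilon_k)=\lambda_j\lambda_k\,\epsilon_j\wedge\epsilon_k$. So $I+\wedge^2 S$ is diagonal in this basis with entries $1+\lambda_j\lambda_k$. These entries are nonzero: in the chosen chart the conditions of Definition~\ref{def:AI} hold, so $1+\lambda_j\lambda_k>0$. Hence $M$ is diagonal with entries $(1+\lambda_j\lambda_k)^{-1}$. Consequently
\[
\tr\pr{M(A\wedge A)}=\sum_{j<k}\frac{1}{1+\lambda_j\lambda_k}\,(A\wedge A)_{jk,jk},
\]
where $(A\wedge A)_{jk,jk}$ denotes the $(\epsilon_j\wedge\epsilon_k)$-coefficient of $(A\wedge A)(\epsilon_j\wedge\epsilon_k)$.

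The remaining step is to identify $A\wedge A$. As the proof of Lemma~\ref{lem:first-expansion} shows, $A\wedge A$ is the second-order coefficient of $\wedge^2(S+tA)$. Since $\wedge^2 B(u\wedge v)=Bu\wedge Bv$, this coefficient is
\[
(A\wedge A)(u\wedge v)=Au\wedge Av,
\]
that is, $A\wedge A=\wedge^2 A$. Expanding $A\epsilon_j=\sum_p a_{pj}\epsilon_p$ and $A\epsilon_k=\sum_q a_{qk}\epsilon_q$, the $\epsilon_j\wedge\epsilon_k$-coefficient of $A\epsilon_j\wedge A\epsilon_k$ is the $2\times2$ minor $a_{jj}a_{kk}-a_{kj}a_{jk}$. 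Because $A$ is symmetric, this equals $a_{jj}a_{kk}-a_{jk}^2$. Multiplying by $2$ gives the stated formula.

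The main point requiring care is the normalization of $A\wedge A$. One must confirm that the convention in Lemma~\ref{lem:first-expansion} has the $t^2$ coefficient equal to $\wedge^2A$, not $2\wedge^2A$ or $\tfrac12\wedge^2A$. This can be checked directly from the expansion of $\wedge^2(S+tA)$ there. Everything else is a straightforward computation with diagonal matrices.
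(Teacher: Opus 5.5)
Your proposal is correct and is essentially the paper's argument: in the basis $\{\epsilon_j\wedge\epsilon_k\}_{j<k}$ the operator $M$ is diagonal with entries $(1+\lambda_j\lambda_k)^{-1}$, and the diagonal entries of $A\wedge A=\wedge^2 A$ are the minors $a_{jj}a_{kk}-a_{jk}^2$. The paper computes the full entry $a_{jp}a_{kq}-a_{jq}a_{kp}$ and then restricts to $(p,q)=(j,k)$. Your normalization concern is settled by the paper's expansion $\det(1+\wedge^2S+tS\wedge A+tA\wedge S+t^2A\wedge A)$ in Lemma~\ref{lem:first-expansion}, which confirms that $A\wedge A$ is exactly the $t^2$ coefficient; the factor $2$ in $T_1$ then comes from differentiating $S\wedge A+A\wedge S$ along $A$.
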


\begin{proof}
	Remember that we are working with an orthonormal basis $\set{v_j}_{j=1,\cdots,n}$ which diagonalizes $S={\rm diag}(\lambda_1,\cdots,\lambda_n)$ and write $A=(a_{jk})_{j,k}$ in the same orthonormal basis.
	Note that $v_j\wedge v_k$'s form a basis of $\wedge^2\bb R^n$ for $j<k,$ and the $(jk,pq)$-entry of $A\wedge A$ is given by
	\begin{align*}
	\pair{(A\wedge A)(v_j\wedge v_k), v_p\wedge v_q}
	& = \ppair{\sum_c a_{jc}v_c\wedge \sum_d a_{kd}v_d, v_p\wedge v_q}
	= a_{jp}a_{kq} - a_{jq}a_{kp}.
	\end{align*}
	Thus, since $M$ is diagonal with entries $1/(1+\lambda_j\lambda_k)$'s, we get
	\begin{align*}
	2 \tr\pr{M(A\wedge A)}
	= 2 \sum_{j<k}
	\frac 1{1+\lambda_j\lambda_k} 
	\pr{a_{jj}a_{kk} - a_{jk}^2}
	\end{align*}
	so the formula follows.
\end{proof}

\begin{lemma}[$T_2$'s expansion]
	\label{lem:T2}
	For $A=(a_{jk})_{j,k},$
	\begin{align*}T_2
	&= - \operatorname{tr}\big(M (A \wedge S + S \wedge A)\, M (A \wedge S + S \wedge A)\big)\\
	&=- \sum_{j<k}\sum_{p<q}
	\frac 1{1+\lambda_j\lambda_k}
	\frac 1{1+\lambda_p\lambda_q}
	\pr{a_{jp}\lambda_k\delta_{kq} - a_{jq}\lambda_k\delta_{kp}
		+ \lambda_j\delta_{jp}a_{kq}
		- \lambda_j\delta_{jq}a_{kp}}^2\\
	&=T_2'+T_2'',
	\end{align*}
	where 
	\begin{align*}T_2'&=- \sum_{j<k}
	\frac 1{(1+\lambda_j\lambda_k)^2} 
	\pr{\lambda_k a_{jj}
		+ \lambda_j a_{kk}
	}^2\text{ and}\\
	T_2''
	&=-\sum_{j<k}\sum_{\substack{m\neq j,k}}
	\frac{2\lambda_m^2}
	{(1+\lambda_j\lambda_m)(1+\lambda_k\lambda_m)}
	\, a_{jk}^2.
	\end{align*}
\end{lemma}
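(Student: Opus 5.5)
The plan is a direct computation in the basis $\set{v_j\wedge v_k}_{j<k}$ of $\wedge^2\bb R^n$ induced by the orthonormal eigenbasis $\set{v_j}$ of $S$, as in the proof of Lemma~\ref{lem:T1}. In this basis $M=(I+\wedge^2 S)^{-1}$ is diagonal with entries $1/(1+\lambda_j\lambda_k)$. These entries are finite because $1+\lambda_j\lambda_k>0$ in a chart where the conditions of Definition~\ref{def:AI} hold. Write $N:=A\wedge S+S\wedge A$. Since $A$ and $S$ are symmetric, $N$ is self-adjoint for the induced inner product on $\wedge^2\bb R^n$, so $N_{jk,pq}=N_{pq,jk}$. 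Because $M$ is diagonal,
\begin{align*}
\tr(MNMN)=\sum_{j<k}\sum_{p<q}\frac{1}{1+\lambda_j\lambda_k}\,\frac{1}{1+\lambda_p\lambda_q}\,N_{jk,pq}^2 .
\end{align*}

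Next I compute the entries of $N$. Using $Sv_k=\lambda_kv_k$,
\begin{align*}
N(v_j\wedge v_k)=Av_j\wedge\lambda_kv_k+\lambda_jv_j\wedge Av_k .
\end{align*}
Pairing with $v_p\wedge v_q$ gives
\begin{align*}
N_{jk,pq}=a_{jp}\lambda_k\delta_{kq}-a_{jq}\lambda_k\delta_{kp}+\lambda_j\delta_{jp}a_{kq}-\lambda_j\delta_{jq}a_{kp}.
\end{align*}
Substituting this into the trace formula yields the first displayed identity for $T_2$.

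To split $T_2$, I sort the pairs $(\{j,k\},\{p,q\})$ by how many indices they share. If the pairs are disjoint, every Kronecker delta vanishes and $N_{jk,pq}=0$. If $\{p,q\}=\{j,k\}$, the diagonal entry is $\lambda_ka_{jj}+\lambda_ja_{kk}$ with weight $(1+\lambda_j\lambda_k)^{-2}$, and these terms give $T_2'$.

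If the pairs share exactly one index $m$, say $\{j,m\}$ and $\{k,m\}$ with $j\neq k$, then exactly one of the four terms survives. Up to a sign that depends on the ordering conventions, it equals $\lambda_m a_{jk}$. The corresponding weight is $\big((1+\lambda_j\lambda_m)(1+\lambda_k\lambda_m)\big)^{-1}$. Each unordered configuration, consisting of an unordered pair $\{j,k\}$ and a third index $m\notin\{j,k\}$, appears twice in the double sum over ordered pairs of basis bivectors. Since the entries enter squared, their signs do not matter, and this part of the sum is exactly $T_2''$.

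The only delicate step is this bookkeeping. I must check that exactly one term survives for each ordering of $j,k,m$, for example $m<j<k$, $j<m<k$, and so on. This can be done cleanly by noting that $N_{jk,pq}$ is antisymmetric under swapping $p,q$ and likewise under swapping $j,k$, so one may compute with unordered indices up to sign. I also need to confirm the factor $2$ coming from the two orderings of $(\{j,m\},\{k,m\})$ versus $(\{k,m\},\{j,m\})$. Apart from that, the argument is a routine expansion.
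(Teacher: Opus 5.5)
Your proposal is correct and follows essentially the paper's route: compute the entries of $A\wedge S+S\wedge A$ in the eigenbasis $\{v_j\wedge v_k\}$, use their symmetry and the diagonality of $M$ to write the trace as a weighted sum of squares, and split into coinciding index pairs (giving $T_2'$) and pairs sharing exactly one index (giving $T_2''$, with the factor $2$ coming from the two orderings). The differences are cosmetic. You get the symmetry $N_{jk,pq}=N_{pq,jk}$ from self-adjointness of $N$ and handle the shared-index bookkeeping via antisymmetry, whereas the paper checks the symmetry by direct index computation and enumerates four ordering cases explicitly.
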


\begin{proof}
	As in Lemma~\ref{lem:T1}, we work with an orthonormal basis $\set{v_j}_{j=1,\cdots,n}$ which diagonalizes $S={\rm diag}(\lambda_1,\cdots,\lambda_n)$ and write $A=(a_{jk})_{j,k}$ in the same orthonormal basis.
	Since $v_j\wedge v_k$'s form a basis of $\wedge^2\bb R^n$ for $j<k,$
	the $(jk,pq)$-entry of $A\wedge S+ S\wedge A$ is
	\begin{align*}
	C_{jkpq}
	:= \pair{(A\wedge S+ S\wedge A)(v_j\wedge v_k), v_p\wedge v_q}
	& = \ppair{
		\sum_c a_{jc}v_c\wedge \lambda_kv_k
		+ \lambda_jv_j\wedge \sum_d a_{kd}v_d,
		v_p\wedge v_q
	}\\
	& = a_{jp}\lambda_k\delta_{kq} - a_{jq}\lambda_k\delta_{kp}
	+ \lambda_j\delta_{jp}a_{kq}
	- \lambda_j\delta_{jq}a_{kp},
	\end{align*}
	and hence it is symmetric in $(jk,pq)$ in the sense that
	\begin{align*}
	C_{pqjk}
	& = a_{pj}\lambda_q\delta_{qk} - a_{pk}\lambda_q\delta_{qj}
	+ \lambda_p\delta_{pj}a_{qk}
	- \lambda_p\delta_{pk}a_{qj}
	= C_{jkpq}.
	\end{align*}
	Thus, if we let $m_{jk}:=1/(1+\lambda_j\lambda_k),$ then
	\begin{align*}
	\tr\pr{M (A \wedge S + S \wedge A) \, M (A \wedge S + S \wedge A) }
	= & \sum_{j<k, p<q}
	m_{jk} C_{jkpq} m_{pq} C_{pqjk}\\
	= & \sum_{j<k, p<q}
	m_{jk} m_{pq}
	\pr{a_{jp}\lambda_k\delta_{kq} - a_{jq}\lambda_k\delta_{kp}
		+ \lambda_j\delta_{jp}a_{kq}
		- \lambda_j\delta_{jq}a_{kp}}^2.
	\end{align*}
	This proves the first identity of the lemma.
	
	Next, we split $T_2$ into two terms $T_2'$ and $T_2''$ by
	\begin{align*}
	T_2'
	&:=- \sum_{\substack{j<k, p<q\\(j,k)= (p,q)}}
	\frac 1{1+\lambda_j\lambda_k}
	\frac 1{1+\lambda_p\lambda_q}
	\pr{a_{jp}\lambda_k\delta_{kq} - a_{jq}\lambda_k\delta_{kp}
		+ \lambda_j\delta_{jp}a_{kq}
		- \lambda_j\delta_{jq}a_{kp}}^2
	\text{ and}\\
	T_2''
	&:= -\sum_{\substack{j<k, p<q\\(j,k)\neq (p,q)}}
	\frac 1{1+\lambda_j\lambda_k}
	\frac 1{1+\lambda_p\lambda_q}
	\pr{a_{jp}\lambda_k\delta_{kq} - a_{jq}\lambda_k\delta_{kp}
		+ \lambda_j\delta_{jp}a_{kq}
		- \lambda_j\delta_{jq}a_{kp}}^2.
	\end{align*}
	The expression of $T_2'$ claimed in the lemma then follows directly by $\delta_{jj}=\delta_{kk}=1$ and $\delta_{jk}=0.$

	$T_2''$ can be rewritten as
	\begin{align*}
	T_2''
	=
	-\sum_{\substack{\ell<m,\ p<q\\(\ell,m)\neq (p,q)}}
	\frac 1{1+\lambda_\ell\lambda_m}
	\frac 1{1+\lambda_p\lambda_q}
	\pr{
		a_{\ell p}\lambda_m\delta_{mq}
		- a_{\ell q}\lambda_m\delta_{mp}
		+ \lambda_\ell\delta_{\ell p}a_{mq}
		- \lambda_\ell\delta_{\ell q}a_{mp}
	}^2.	
	\end{align*}
	Note that since $(\ell,m)\neq (p,q),$ the pairs will contribute only when $\set{\ell,m}\cap \set{p,q}$ has exactly one element.
	Thus, when we fix two numbers $j<k,$ we have the following four cases.
	We let $r$ be the common index, which will be different from $j$ or $k.$
	
	When $\ell=p=r,$ the third term in the parenthesis is the only non-zero term and we get the term containing $a_{jk}^2$ when $\set{m,q}=\set{j,k}.$
	This then leads to
	\begin{align}\label{T2-1}
	\sum_{\substack{r\neq j,k\\r<j,r<k}}
	\frac 1{1+\lambda_r\lambda_j}
	\frac 1{1+\lambda_r\lambda_k}
	\lambda_r^2 a_{jk}^2
	+ \sum_{\substack{r\neq j,k\\r<k,r<j}}
	\frac 1{1+\lambda_r\lambda_k}
	\frac 1{1+\lambda_r\lambda_j}
	\lambda_r^2 a_{kj}^2.
	\end{align}
	When $\ell=q=r,$ the fourth term in the parenthesis is the only non-zero term and we get the term containing $a_{jk}^2$ when $\set{m,p}=\set{j,k}.$
	This then leads to
	\begin{align}\label{T2-2}
	\sum_{\substack{r\neq j,k\\k<r<j}}
	\frac 1{1+\lambda_r\lambda_j}
	\frac 1{1+\lambda_k\lambda_r}
	\lambda_r^2 a_{jk}^2
	+ \sum_{\substack{r\neq j,k\\j<r<k}}
	\frac 1{1+\lambda_r\lambda_k}
	\frac 1{1+\lambda_j\lambda_r}
	\lambda_r^2 a_{kj}^2.
	\end{align}
	When $m=p=r,$ the third term in the parenthesis is the only non-zero term and we get the term containing $a_{jk}^2$ when $\set{\ell,q}=\set{j,k}.$
	This then leads to
	\begin{align}\label{T2-3}
	\sum_{\substack{r\neq j,k\\j<r<k}}
	\frac 1{1+\lambda_r\lambda_j}
	\frac 1{1+\lambda_r\lambda_k}
	\lambda_r^2 a_{jk}^2
	+ \sum_{\substack{r\neq j,k\\k<r<j}}
	\frac 1{1+\lambda_r\lambda_k}
	\frac 1{1+\lambda_r\lambda_j}
	\lambda_r^2 a_{kj}^2.
	\end{align}
	When $m=q=r,$ the third term in the parenthesis is the only non-zero term and we get the term containing $a_{jk}^2$ when $\set{\ell,p}=\set{j,k}.$
	This then leads to
	\begin{align}\label{T2-4}
	\sum_{\substack{r\neq j,k\\r>j,r>k}}
	\frac 1{1+\lambda_r\lambda_j}
	\frac 1{1+\lambda_r\lambda_k}
	\lambda_r^2 a_{jk}^2
	+ \sum_{\substack{r\neq j,k\\r>k,r>j}}
	\frac 1{1+\lambda_r\lambda_k}
	\frac 1{1+\lambda_r\lambda_j}
	\lambda_r^2 a_{kj}^2.
	\end{align}
	Combining these four terms \eqref{T2-1}, \eqref{T2-2}, \eqref{T2-3}, and \eqref{T2-4}, we get exactly two summations over $r\neq j,k,$ so the terms involving $a_{jk}^2$ in $T_2''$ are 
	\begin{align*}
	\sum_{r\neq j,k}
	\frac 1{1+\lambda_r\lambda_j}
	\frac 1{1+\lambda_r\lambda_k}
	\lambda_r a_{jk}^2
	+ \sum_{r\neq j,k}
	\frac 1{1+\lambda_r\lambda_k}
	\frac 1{1+\lambda_r\lambda_j}
	\lambda_r a_{kj}^2
	= \sum_{r\neq j,k}
	\frac{2\lambda_r^2}
	{(1+\lambda_j\lambda_r)(1+\lambda_k\lambda_r)}
	\, a_{jk}^2.
	\end{align*}
	This completes the proof of the lemma.
\end{proof}

\begin{lemma}[$T_3$'s expansion]
	\label{lem:T3}
	We have
	\begin{align*}
	T_3
	=2 \operatorname{tr}\big(M (S \wedge A S G A + A S G A \wedge S)\big)
    &=2 \sum_{j<k}
	\frac 1{1+\lambda_j\lambda_k}
	\pr{\sum_r a_{jr}^2
		\frac{\lambda_r \lambda_k} {1+\lambda_r^2} 
		+ \sum_r  a_{kr}^2
		\frac{\lambda_j\lambda_r}
		{1+\lambda_r^2}
	}\\
	&=T_3'+T_3'',
	\end{align*}
	where 
	\begin{align*} 
	T_3'&=  2 
	\sum_{j<k}\frac 1{1+\lambda_j\lambda_k}
	\pr{
		\frac{\lambda_j\lambda_k}{1+\lambda_j^2} a_{jj}^2
		+ \frac{\lambda_k^2}{1+\lambda_k^2} a_{jk}^2
		+ \frac{\lambda_j^2}{1+\lambda_j^2} a_{kj}^2
		+ \frac{\lambda_j\lambda_k}{1+\lambda_k^2} a_{kk}^2
	}\text{ and}\\
	T_3''&= \sum_{j<k}
	\sum_{m\neq j,k}
	\pr{
		\frac{2\lambda_j\lambda_m}{(1+\lambda_j^2)(1+\lambda_k\lambda_m)}
		+ \frac{2\lambda_k\lambda_m}{(1+\lambda_k^2)(1+\lambda_j\lambda_m)}
	}a_{jk}^2.
	\end{align*}
\end{lemma}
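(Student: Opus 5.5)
The plan is to compute the trace directly in the orthonormal basis $\set{v_j}$ that diagonalizes $S={\rm diag}(\lambda_1,\cdots,\lambda_n)$, as in Lemmas~\ref{lem:T1} and~\ref{lem:T2}. In this basis $G={\rm diag}\pr{(1+\lambda_r^2)^{-1}}$, so $SG={\rm diag}\pr{\lambda_r/(1+\lambda_r^2)}$. The first step is to write out the entries of $B:=ASGA$:
\begin{align*}
b_{jk}=\sum_r a_{jr}\frac{\lambda_r}{1+\lambda_r^2}a_{rk}.
\end{align*}
In particular, the diagonal entries are $b_{jj}=\sum_r a_{jr}^2\lambda_r/(1+\lambda_r^2)$, using the symmetry of $A$.

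Next, since $M$ is diagonal in the basis $\set{v_j\wedge v_k}_{j<k}$ with entries $1/(1+\lambda_j\lambda_k)$, only the diagonal entries of $S\wedge B+B\wedge S$ contribute to the trace. The same pairing as in the proof of Lemma~\ref{lem:T2} gives
\begin{align*}
\pair{(S\wedge B+B\wedge S)(v_j\wedge v_k),v_j\wedge v_k}
=\lambda_j b_{kk}+b_{jj}\lambda_k .
\end{align*}
The off-diagonal parts of $B$ are killed by the wedge with the diagonal $S$. Substituting the formula for $b_{jj}$ and $b_{kk}$ then yields the first displayed identity of the lemma, namely
\begin{align*}
T_3=2\sum_{j<k}\frac{1}{1+\lambda_j\lambda_k}\pr{\sum_r a_{jr}^2\frac{\lambda_r\lambda_k}{1+\lambda_r^2}+\sum_r a_{kr}^2\frac{\lambda_j\lambda_r}{1+\lambda_r^2}}.
\end{align*}

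Finally, I split the inner sums according to whether $r\in\set{j,k}$ or not.

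\emph{The case $r\in\set{j,k}$.} Taking $r=j$ in the first sum and $r=k$ in the second gives the $a_{jj}^2$ and $a_{kk}^2$ terms. Taking $r=k$ in the first sum and $r=j$ in the second gives the $\lambda_k^2a_{jk}^2/(1+\lambda_k^2)$ and $\lambda_j^2a_{kj}^2/(1+\lambda_j^2)$ terms. Together these are exactly $T_3'$.

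\emph{The case $r\notin\set{j,k}$.} These terms are indexed by the pair $(j,k)$ with $j<k$ and a third index $r$, and they carry $a_{jr}^2$ or $a_{kr}^2$. Rewriting them in the form stated for $T_3''$ is a reindexing. For a fixed unordered pair $\set{p,q}$ carrying $a_{pq}^2$, I collect all triples in which it appears. The term $a_{jr}^2$ arising from the pair $(j,k)$, with $k=m$ as the third index, contributes
\begin{align*}
\frac{2\lambda_r\lambda_m}{(1+\lambda_r^2)(1+\lambda_j\lambda_m)}\,a_{jr}^2 .
\end{align*}
Renaming so that the squared entry is $a_{jk}^2$ with $j<k$, and summing over the remaining index $m\neq j,k$ (whether $m$ lies below, between or above $j,k$), should produce
\begin{align*}
\frac{2\lambda_k\lambda_m}{(1+\lambda_k^2)(1+\lambda_j\lambda_m)}a_{jk}^2+\frac{2\lambda_j\lambda_m}{(1+\lambda_j^2)(1+\lambda_k\lambda_m)}a_{jk}^2 .
\end{align*}
This is the claimed form of $T_3''$.

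I expect this bookkeeping step to be the main obstacle. The orderings $m<j$, $j<m<k$ and $m>k$ must be checked, much as in the four-case analysis of Lemma~\ref{lem:T2}, so that each unordered triple is counted exactly once with the correct one of the two coefficients. Using the symmetric expression $\sum_{\set{j,k}}$ over unordered pairs from the outset should make this transparent. The reason is that the summand $\lambda_jb_{kk}+\lambda_kb_{jj}$, weighted by $1/(1+\lambda_j\lambda_k)$, is already symmetric in $j$ and $k$.
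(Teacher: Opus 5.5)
Your proposal is correct and follows the paper's proof essentially step for step. It uses the same diagonalizing basis, the same formula for $b_{jj}$, the same diagonal entries $\lambda_k b_{jj}+\lambda_j b_{kk}$ of $S\wedge B+B\wedge S$, and the same split of the $r$-sum according to whether $r\in\{j,k\}$. The only difference is the final reindexing. The paper does it by the ordered case analysis you anticipated: it rewrites over pairs $\ell<m$ and collects contributions to $a_{jk}^2$ with the third index above or below the pair. Your symmetrized form $T_3=2\sum_{j\neq m}\lambda_m b_{jj}/(1+\lambda_j\lambda_m)$ instead gives the coefficient of $a_{jk}^2$ directly, from the $b_{jj}$ term with $r=k$ and the $b_{kk}$ term with $r=j$, each summed over all $m\neq j,k$. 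So the step you flagged closes without further case-checking.
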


\begin{proof}
	As in Lemma~\ref{lem:T1}, we work with an orthonormal basis $\set{v_j}_{j=1,\cdots,n}$ which diagonalizes $S={\rm diag}(\lambda_1,\cdots,\lambda_n)$ and write $A=(a_{jk})_{j,k}$ in the same orthonormal basis.
	Again, $v_j\wedge v_k$'s form a basis of $\wedge^2\bb R^n$ for $j<k.$
	Thus, if $b_{jk}$ is the $(j,k)$-entry of $ASGA,$ i.e.,
	\begin{align}\label{bjk}
	b_{jk}
	= \sum_{r} a_{jr}\cdot \lambda_r\cdot \frac 1{1+\lambda_r^2}\cdot a_{rk},
	\end{align}
	then the $(jk,pq)$-entry of $ASGA \wedge S + S \wedge ASGA$ is
	\begin{align*}
	b_{jp}\lambda_k\delta_{kq} - b_{jq}\lambda_k\delta_{kp}
	+ \lambda_j\delta_{jp}b_{kq}
	- \lambda_j\delta_{jq}b_{kp},
	\end{align*}
	and hence
	\begin{align*}
	\tr\pr{M (ASGA \wedge S + S \wedge ASGA) }
	= & \sum_{j<k}
	m_{jk} 
	\pr{b_{jj}\lambda_k\delta_{kk} - b_{jk}\lambda_k\delta_{kj}
		+ \lambda_j\delta_{jj}b_{kk}
		- \lambda_j\delta_{jk}b_{kj}}\\
	= & \sum_{j<k} m_{jk} 
	\pr{b_{jj}\lambda_k + b_{kk}\lambda_j }.
	\end{align*}
	Combining this with \eqref{bjk} leads to the first expression of $T_3$ claimed in the lemma.
	
	Next, as in Lemma~\ref{lem:T2}, we split $T_3$ into two terms $T_3'$ and $T_3''$ by
	\begin{align*}
	T_3'
	&:=2 \sum_{j<k}
	\frac 1{1+\lambda_j\lambda_k}
	\pr{\sum_{r=j,k} a_{jr}^2
		\frac{\lambda_r \lambda_k} {1+\lambda_r^2} 
		+ \sum_{r=j,k} a_{kr}^2
		\frac{\lambda_j\lambda_r}
		{1+\lambda_r^2}
	}
	\text{ and}\\
	T_3''
	&:=2 \sum_{j<k}
	\frac 1{1+\lambda_j\lambda_k}
	\pr{\sum_{r\neq j,k} a_{jr}^2
		\frac{\lambda_r \lambda_k} {1+\lambda_r^2} 
		+ \sum_{r\neq j,k}  a_{kr}^2
		\frac{\lambda_j\lambda_r}
		{1+\lambda_r^2}
	}.
	\end{align*}
	Thus, there are exactly four terms in $T_3'$ and the expression of $T_3'$ claimed in the lemma follows directly.
	
	For $T_3'',$ we will do a similar analysis as in Lemma~\ref{lem:T2}.
	We first rewrite the expression of $T_3''$ as 
	\begin{align*}
	T_3''
	= 2 \sum_{\ell<m}
	\sum_{r\neq \ell,m}
	\pr{
		a_{\ell r}^2
		\frac{\lambda_r \lambda_m}{(1+\lambda_r^2)(1+\lambda_\ell\lambda_m)}
		+ a_{mr}^2
		\frac{\lambda_\ell\lambda_r}
		{(1+\lambda_r^2)(1+\lambda_\ell\lambda_m)}
	}.
	\end{align*}
	We fix two numbers $j<k.$
	
	We first look at the first term involving $a_{\ell r}^2.$
	We then get non-trivial contribution from the term when $\set{\ell,r}=\set{j,k}.$
	Thus, we get the term containing $a_{jk}^2$ from the first term to be
	\begin{align}\label{T3-1}
	2\sum_{\substack{m>j\\m\neq k}}
	a_{jk}^2
	\frac{\lambda_k \lambda_m}{(1+\lambda_k^2)(1+\lambda_j\lambda_m)}
	+ 2\sum_{\substack{m>k\\m\neq j}}
	a_{kj}^2
	\frac{\lambda_j \lambda_m}{(1+\lambda_j^2)(1+\lambda_k\lambda_m)}.
	\end{align}
	We next look at the first term involving $a_{m r}^2.$
	We then get non-trivial contribution from the term when $\set{m,r}=\set{j,k}.$
	Thus, we get the term containing $a_{jk}^2$ from the first term to be
	\begin{align}\label{T3-2}
	2\sum_{\substack{\ell<j\\\ell\neq k}}
	a_{jk}^2
	\frac{\lambda_\ell\lambda_k}
	{(1+\lambda_k^2)(1+\lambda_\ell\lambda_j)}
	+ 2\sum_{\substack{\ell<k\\\ell\neq j}}
	a_{kj}^2
	\frac{\lambda_\ell\lambda_j}
	{(1+\lambda_j^2)(1+\lambda_\ell\lambda_k)}.
	\end{align}
	
	If we replace the indices $m$'s and $\ell$'s in \eqref{T3-1} and \eqref{T3-2} with $r$'s, we get exactly two summations over $r\neq j,k$ and get the terms involving $a_{jk}^2$ in $T_3''$ to be 
	\begin{align*}
	2\sum_{r\neq j,k}
	a_{jk}^2
	\frac{\lambda_r\lambda_k}{(1+\lambda_k^2)(1+\lambda_r\lambda_j)}
	+ 2\sum_{r\neq j,k}
	a_{kj}^2
	\frac{\lambda_r\lambda_j}{(1+\lambda_j^2)(1+\lambda_r\lambda_k)}.
	\end{align*}
	The lemma then follows from $a_{jk}=a_{kj}.$
\end{proof}

Finally, we also rewrite $T_4$ in the same way.
It is a straightforward splitting but we still write it in a lemma.

\begin{lemma}[$T_4$'s expansion]
	\label{lem:T4}
	We have
	\[\begin{split}T_4
    =-(n-1) \operatorname{tr}(G A G A) -(n-1) \operatorname{tr}(G S A G S A)
    &= -(n-1)\sum_{j,k}
	\frac{a_{jk}^2\pr{1+\lambda_j\lambda_k}}{(1+\lambda_j^2)(1+\lambda_k^2)}\\
	&=T_4'+T_4'',
	\end{split}\]
	where 
	\begin{align*}
	T_4'&=- \sum_{j<k}\pr{
		\frac{1}{1+\lambda_j^2} a_{jj}^2
		+2 \frac{\pr{1+\lambda_j\lambda_k}}{(1+\lambda_j^2)(1+\lambda_k^2)} a_{jk}^2
		+ \frac{1}{1+\lambda_k^2} a_{kk}^2
	}\text{ and}\\
	T_4''&=- 2(n-2)\cdot \sum_{j<k} 
	\frac{a_{jk}^2\pr{1+\lambda_j\lambda_k}}{(1+\lambda_j^2)(1+\lambda_k^2)}.
	\end{align*}
	
\end{lemma}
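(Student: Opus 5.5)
We compute the two traces entry by entry in the orthonormal basis $\set{v_j}$ that diagonalizes $S={\rm diag}(\lambda_1,\cdots,\lambda_n)$, with $A=(a_{jk})_{j,k}$ symmetric. In this basis $G={\rm diag}\pr{(1+\lambda_1^2)^{-1},\cdots,(1+\lambda_n^2)^{-1}}$ and $SG$ is diagonal with entries $\lambda_j/(1+\lambda_j^2)$. For diagonal matrices $D=(d_j)$ and $E=(e_j)$ we have $\tr(DAEA)=\sum_{j,k} d_j a_{jk} e_k a_{kj}=\sum_{j,k} d_j e_k a_{jk}^2$, using $a_{kj}=a_{jk}$. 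Taking $D=E=G$ gives
\[
\tr(GAGA)=\sum_{j,k}\frac{a_{jk}^2}{(1+\lambda_j^2)(1+\lambda_k^2)}.
\]
Since $S$ and $G$ commute, $GSAGSA=(SG)A(SG)A$, and taking $D=E=SG$ gives
\[
\tr(GSAGSA)=\sum_{j,k}\frac{\lambda_j\lambda_k\,a_{jk}^2}{(1+\lambda_j^2)(1+\lambda_k^2)}.
\]
Adding the two and multiplying by $-(n-1)$ yields the first identity of the lemma.

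For the splitting, we separate the full double sum into its diagonal part ($j=k$) and its off-diagonal part ($j\ne k$). On the diagonal, $(1+\lambda_j^2)/(1+\lambda_j^2)^2=1/(1+\lambda_j^2)$, so the diagonal contribution is $-(n-1)\sum_j a_{jj}^2/(1+\lambda_j^2)$. Each index $j$ belongs to exactly $n-1$ pairs $\set{j,k}$ with $j<k$ or $k<j$, so this equals $-\sum_{j<k}\pr{\frac{a_{jj}^2}{1+\lambda_j^2}+\frac{a_{kk}^2}{1+\lambda_k^2}}$. This is exactly the diagonal part of $T_4'$.

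The off-diagonal terms are symmetric in $(j,k)$, so they contribute
\[
-2(n-1)\sum_{j<k}\frac{(1+\lambda_j\lambda_k)\,a_{jk}^2}{(1+\lambda_j^2)(1+\lambda_k^2)}.
\]
We write the factor $n-1$ as $1+(n-2)$. The piece with coefficient $1$ is the middle term of $T_4'$, and the piece with coefficient $n-2$ is $T_4''$.

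There is no real obstacle here. The only points needing care are the symmetry $a_{jk}=a_{kj}$, which is what produces the factor $2$, and the count of $n-1$ pairs containing each index. The grouping into $T_4'$ and $T_4''$ is chosen to match the splittings of $T_1$, $T_2$ and $T_3$: $T_4'$ collects the terms attached to a single pair $\set{j,k}$, and the $(n-2)$-fold part of $T_4''$ is meant to be distributed over the $n-2$ indices $m\ne j,k$ that appear in $T_2''$ and $T_3''$.
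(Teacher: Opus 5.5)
Your proposal is correct and follows the paper's proof essentially step for step. The paper likewise expands $\operatorname{tr}(GAGA)+\operatorname{tr}(GSAGSA)$ in the eigenbasis of $S$ as $\sum_{j,k}\frac{(1+\lambda_j\lambda_k)a_{jk}^2}{(1+\lambda_j^2)(1+\lambda_k^2)}$, separates the $j=k$ and $j\neq k$ parts, and matches coefficients using the same counts: each $a_{jj}^2$ appears $n-1$ times in $T_4'$, while each $a_{jk}^2$ appears $2$ times in $T_4'$ and $2(n-2)$ times in $T_4''$.
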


\begin{proof}
	First, if we write $g_j:=1/(1+\lambda_j^2)$ to be the entry of $G,$ we can expand
	\begin{align*}
	\tr(GAGA) + \tr(GSAGSA)
	= \sum_{j,k=1}^n \pr{g_ja_{jk}g_ka_{kj} 
		+g_j\lambda_ja_{jk}g_k\lambda_k a_{kj}
	}
	& = \sum_{j,k} g_jg_ka_{jk}^2(1+\lambda_j\lambda_k)\\
	& = \sum_{j,k}
	\frac{a_{jk}^2\pr{1+\lambda_j\lambda_k}}{(1+\lambda_j^2)(1+\lambda_k^2)}.
	\end{align*}
	This proves the first expression of $T_4.$
	
	Next, we split $T_4$ into
	\begin{align*}
	T_4
	= -(n-1) \sum_{j,k}
	\frac{a_{jk}^2\pr{1+\lambda_j\lambda_k}}{(1+\lambda_j^2)(1+\lambda_k^2)}
	& = -(n-1) 
	\pr{\sum_{j=k}
		\frac{a_{jk}^2\pr{1+\lambda_j\lambda_k}}{(1+\lambda_j^2)(1+\lambda_k^2)}
		+ \sum_{j\neq k}
		\frac{a_{jk}^2\pr{1+\lambda_j\lambda_k}}{(1+\lambda_j^2)(1+\lambda_k^2)}}\\
	& = -(n-1)\sum_j \frac{a_{jj}^2}{1+\lambda_j^2}
	- 2(n-1) \sum_{j<k} \frac{a_{jk}^2\pr{1+\lambda_j\lambda_k}}{(1+\lambda_j^2)(1+\lambda_k^2)}.
	\end{align*}
	This implies $T_4'+T_4''=T_4.$
	Note that in $T_4',$ each $a_{jj}^2$ term appears $(n-1)$ times, matching $T_4$ above;
	each $a_{jk}^2$ appears $2$ times, so combined with $T_4''$ where each $a_{jk}^2$ appears $2(n-2)$ times, it matches the $2(n-1)$ times in $T_4.$
\end{proof}

We will combine the results in Lemmas~\ref{lem:T1}, \ref{lem:T2}, \ref{lem:T3}, and \ref{lem:T4} in the following two corollaries.
First, we collect the diagonal terms $T_k'$'s.
Recall that we have $T_1'=T_1.$

\begin{cor}
	\label{lem:diag-T}
	We have
	\begin{align*}
	T_1'+T'_2+T'_3+T'_4
	= - \sum_{j<k}
	\pr{\frac 1{\pr{1+\lambda_j\lambda_k}^2} 
		\pr{\sqrt{\frac{1+\lambda_k^2}{1+\lambda_j^2}}a_{jj} - \sqrt{\frac{1+\lambda_j^2}{1+\lambda_k^2}}a_{kk}}^2
		+\frac {4}{(1+\lambda_j^2)(1+\lambda_k^2)}a_{jk}^2}.
	\end{align*}
\end{cor}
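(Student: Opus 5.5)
The identity is a termwise regrouping of the diagonal parts already computed in Lemmas~\ref{lem:T1}, \ref{lem:T2}, \ref{lem:T3}, and \ref{lem:T4}. Each of $T_1',T_2',T_3',T_4'$ is a sum over pairs $j<k$ of a quadratic form in the three variables $a_{jj},a_{kk},a_{jk}$. The claimed right-hand side has the same structure. So I will fix a pair $j<k$ and compare, one at a time, the coefficients of $a_{jj}^2$, $a_{kk}^2$, $a_{jj}a_{kk}$ and $a_{jk}^2$ in the pair's summand. No reindexing across different pairs is needed, because the off-diagonal contributions that mix pairs were already moved into the $T_k''$.

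For $a_{jj}^2$, the contributions are $-\lambda_k^2/(1+\lambda_j\lambda_k)^2$ from $T_2'$, $2\lambda_j\lambda_k/\big((1+\lambda_j\lambda_k)(1+\lambda_j^2)\big)$ from $T_3'$, and $-1/(1+\lambda_j^2)$ from $T_4'$. I will clear the common denominator $(1+\lambda_j\lambda_k)^2(1+\lambda_j^2)$. The numerator becomes
\[
-\lambda_k^2(1+\lambda_j^2)+2\lambda_j\lambda_k(1+\lambda_j\lambda_k)-(1+\lambda_j\lambda_k)^2=-(1+\lambda_k^2).
\]
This is exactly the coefficient $-\frac{1+\lambda_k^2}{(1+\lambda_j\lambda_k)^2(1+\lambda_j^2)}$ produced by expanding the square in the statement. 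The $a_{kk}^2$ coefficient follows by the symmetry $j\leftrightarrow k$.

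For the cross term $a_{jj}a_{kk}$, only $T_1'$ and $T_2'$ contribute, giving
\[
\frac{2}{1+\lambda_j\lambda_k}-\frac{2\lambda_j\lambda_k}{(1+\lambda_j\lambda_k)^2}=\frac{2}{(1+\lambda_j\lambda_k)^2}.
\]
This matches the expanded target: the square contributes $-2\cdot\frac{1}{(1+\lambda_j\lambda_k)^2}\cdot(-1)$, since the two square-root factors multiply to $1$.

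For $a_{jk}^2$ (using $a_{jk}=a_{kj}$), the contributions come from $T_1'$, $T_3'$ and $T_4'$. First I will combine $T_1'$ and $T_3'$:
\[
\frac{2}{1+\lambda_j\lambda_k}\Big(\frac{\lambda_k^2}{1+\lambda_k^2}+\frac{\lambda_j^2}{1+\lambda_j^2}-1\Big)=\frac{2(\lambda_j\lambda_k-1)}{(1+\lambda_j^2)(1+\lambda_k^2)}.
\]
This step uses $\lambda_j^2\lambda_k^2-1=(\lambda_j\lambda_k-1)(1+\lambda_j\lambda_k)$ to cancel the factor $1+\lambda_j\lambda_k$. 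Adding the $T_4'$ term $-2(1+\lambda_j\lambda_k)/\big((1+\lambda_j^2)(1+\lambda_k^2)\big)$ then gives $-4/\big((1+\lambda_j^2)(1+\lambda_k^2)\big)$, as claimed.

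The only real obstacle is bookkeeping. One must check that each $a_{jj}^2$ in $T_4'$ is attributed to pairs consistently with $T_2'$ and $T_3'$; all three are written per pair $j<k$, so this holds. One must also make sure the factor $1+\lambda_j\lambda_k$ cancels in the $a_{jk}^2$ computation. That cancellation is legitimate because $1+\lambda_j\lambda_k>0$ on $\mathcal{AI}(n)$ in the chosen chart.
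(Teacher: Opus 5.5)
Your proposal is correct and follows essentially the same route as the paper: the paper also collects the diagonal parts per pair $j<k$ into $\Xi'_{jk}$, reads off the coefficients of $a_{jj}^2$, $a_{jj}a_{kk}$, and $a_{jk}^2$, and completes the square. Your coefficient computations all check out. The cancellation of $1+\lambda_j\lambda_k$ only needs $1+\lambda_j\lambda_k\neq 0$, which is already required for $M$ to be defined.
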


\begin{proof}
	From Lemmas~\ref{lem:T1}, \ref{lem:T2}, \ref{lem:T3}, and \ref{lem:T4}, we can write
	$T_1'+T_2'+T_3'+T_4' =\sum\limits_{j<k} \Xi_{jk}'$
	with
	\begin{align*}
	\Xi'_{jk}
	= & 
	2 
	\frac 1{1+\lambda_j\lambda_k} 
	\pr{a_{jj}a_{kk} - a_{jk}^2}\\
	& - 
	\frac 1{(1+\lambda_j\lambda_k)^2} 
	\pr{\lambda_k a_{jj}
		+ \lambda_j a_{kk}
	}^2\\
	& + 2 
	\frac 1{1+\lambda_j\lambda_k}
	\pr{
		\frac{\lambda_j\lambda_k}{1+\lambda_j^2} a_{jj}^2
		+ \frac{\lambda_k^2}{1+\lambda_k^2} a_{jk}^2
		+ \frac{\lambda_j^2}{1+\lambda_j^2} a_{kj}^2
		+ \frac{\lambda_j\lambda_k}{1+\lambda_k^2} a_{kk}^2
	}\\
	& - \pr{
		\frac{1}{1+\lambda_j^2} a_{jj}^2
		+2 \frac{\pr{1+\lambda_j\lambda_k}}{(1+\lambda_j^2)(1+\lambda_k^2)} a_{jk}^2
		+ \frac{1}{1+\lambda_k^2} a_{kk}^2
	}.
	\end{align*}
    By looking at the coefficients of $a_{jj}^2,a_{jj}a_{kk},$ and $a_{jk}^2,$ a straightforward calculation leads to 
	\begin{align*}
	\Xi_{jk}'
	& = \frac{-1-\lambda_k^2}{(1+\lambda_j^2)(1+\lambda_j\lambda_k)^2} a_{jj}^2
	+ \frac 2{(1+\lambda_j\lambda_k)^2} a_{jj}a_{kk}
	+ \frac{-1-\lambda_j^2}{(1+\lambda_k^2)(1+\lambda_j\lambda_k)^2} a_{kk}^2
	-\frac{4}
	{(1+\lambda_j^2)(1+\lambda_k^2)} a_{jk}^2.
	\end{align*}
    The lemma then follows by completing the square.
\end{proof}

Next, we collect the off-diagonal terms $T_k''$'s.
Recall that $T_1''=0.$

\begin{cor}
	\label{lem:off-diag-T}
	We have
	\begin{align*}
	T_2''+T_3''+T_4''
	= -\sum_{j<k}
	\sum_{m\neq j,k}
	\frac{2(1+\lambda_j\lambda_k)(1+\lambda_m^2)}
	{(1+\lambda_j^2)(1+\lambda_k^2)(1+\lambda_j\lambda_m)(1+\lambda_k\lambda_m)}
	a_{jk}^2.
	\end{align*}
\end{cor}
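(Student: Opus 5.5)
The plan is a direct coefficient comparison. By Lemmas~\ref{lem:T2}, \ref{lem:T3}, and \ref{lem:T4}, each of $T_2''$, $T_3''$, $T_4''$ is a sum over $j<k$ of a coefficient times $a_{jk}^2$. The $T_2''$ and $T_3''$ coefficients are themselves sums over $m\neq j,k$. The $T_4''$ coefficient is $-2(n-2)(1+\lambda_j\lambda_k)/((1+\lambda_j^2)(1+\lambda_k^2))$. Since there are exactly $n-2$ indices $m\neq j,k$, I will rewrite it as
\[
-\sum_{m\neq j,k}\frac{2(1+\lambda_j\lambda_k)}{(1+\lambda_j^2)(1+\lambda_k^2)}.
\]
After this, everything is a single double sum $\sum_{j<k}\sum_{m\neq j,k}(\cdots)\,a_{jk}^2$. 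It then suffices to show that, for fixed distinct $j,k,m$,
\[
-\frac{2\lambda_m^2}{(1+\lambda_j\lambda_m)(1+\lambda_k\lambda_m)}
+\frac{2\lambda_j\lambda_m}{(1+\lambda_j^2)(1+\lambda_k\lambda_m)}
+\frac{2\lambda_k\lambda_m}{(1+\lambda_k^2)(1+\lambda_j\lambda_m)}
-\frac{2(1+\lambda_j\lambda_k)}{(1+\lambda_j^2)(1+\lambda_k^2)}
\]
equals $-2(1+\lambda_j\lambda_k)(1+\lambda_m^2)/\bigl((1+\lambda_j^2)(1+\lambda_k^2)(1+\lambda_j\lambda_m)(1+\lambda_k\lambda_m)\bigr)$.

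To verify this, multiply through by the common denominator $D=(1+\lambda_j^2)(1+\lambda_k^2)(1+\lambda_j\lambda_m)(1+\lambda_k\lambda_m)$, which is nonzero on $\mathcal{AI}(n)$. Divide by $2$ and write $x=\lambda_j$, $y=\lambda_k$, $z=\lambda_m$. The numerator becomes
\[
-z^2(1+x^2)(1+y^2)+xz(1+y^2)(1+xz)+yz(1+x^2)(1+yz)-(1+xy)(1+xz)(1+yz),
\]
and the target is $-(1+xy)(1+z^2)$.

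I will check this polynomial identity by expanding in powers of $z$.
\begin{itemize}
\item The $z^0$ coefficient is $-(1+xy)$ from the last term.
\item The $z^1$ coefficient is $x(1+y^2)+y(1+x^2)-(1+xy)(x+y)=0$.
\item The $z^2$ coefficient is $-(1+x^2)(1+y^2)+x^2(1+y^2)+y^2(1+x^2)-xy(1+xy)=-1-xy$.
\end{itemize}
Together these give $-(1+xy)(1+z^2)$, as required.

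The only real obstacle is bookkeeping. I need to make sure the three lemmas' off-diagonal coefficients are read with consistent index conventions, in particular which of $j,k$ is paired with $\lambda_m$ in the denominators of $T_3''$. I also need the redistribution of $T_4''$ over the $n-2$ values of $m$. Once these are aligned, the algebraic identity above finishes the proof.
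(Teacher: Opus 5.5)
Your proposal is correct and follows the paper's proof: you spread the $(n-2)$ factor of $T_4''$ over the indices $m\neq j,k$, reduce to a single rational identity for fixed distinct $j,k,m$, and verify it. Your expansion in powers of $z=\lambda_m$, with coefficients $-(1+xy)$, $0$, $-(1+xy)$, correctly carries out the ``straightforward calculation'' the paper leaves implicit, and your reading of the index pairing in $T_3''$ agrees with Lemma~\ref{lem:T3}.
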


\begin{proof}
	From Lemmas~\ref{lem:T1}, \ref{lem:T2}, \ref{lem:T3}, and \ref{lem:T4}, we can write
	$T_2''+T_3''+T_4''
	=\sum\limits_{j<k} \Xi_{jk}''$
	with
	\begin{align*}
	\Xi''_{jk}
	= &
	-\sum_{j<k}\sum_{\substack{m\neq j,k}}
	\frac{2\lambda_m^2}
	{(1+\lambda_j\lambda_m)(1+\lambda_k\lambda_m)}
	\, a_{jk}^2\\
	& +  \sum_{j<k}
	\sum_{m\neq j,k}
	\pr{
		\frac{2\lambda_j\lambda_m}{(1+\lambda_j^2)(1+\lambda_k\lambda_m)}
		+ \frac{2\lambda_k\lambda_m}{(1+\lambda_k^2)(1+\lambda_j\lambda_m)}
	}a_{jk}^2\\
	& - 2(n-2)\cdot \sum_{j<k} 
	\frac{a_{jk}^2\pr{1+\lambda_j\lambda_k}}{(1+\lambda_j^2)(1+\lambda_k^2)}.
	\end{align*}
	Therefore, $\Xi_{jk}''$ is the sum of
	\begin{align*}
	\pr{
		-\frac{2\lambda_m^2}
		{(1+\lambda_j\lambda_m)(1+\lambda_k\lambda_m)}
		+ \frac{2\lambda_j\lambda_m}{(1+\lambda_j^2)(1+\lambda_k\lambda_m)}
		+ \frac{2\lambda_k\lambda_m}{(1+\lambda_k^2)(1+\lambda_j\lambda_m)}
		- \frac{2(1+\lambda_j\lambda_k)}{(1+\lambda_j^2)(1+\lambda_k^2)}
	}a_{jk}^2
	\end{align*}
	over $j<k$ and $m\neq j,k.$
    Via a straightforward calculation, it follows that
    \begin{align*}
    &-\frac{2\lambda_m^2}
		{(1+\lambda_j\lambda_m)(1+\lambda_k\lambda_m)}
		+ \frac{2\lambda_j\lambda_m}{(1+\lambda_j^2)(1+\lambda_k\lambda_m)}
		+ \frac{2\lambda_k\lambda_m}{(1+\lambda_k^2)(1+\lambda_j\lambda_m)}
		- \frac{2(1+\lambda_j\lambda_k)}{(1+\lambda_j^2)(1+\lambda_k^2)}\\
    = &-\frac{2(1+\lambda_j\lambda_k)(1+\lambda_m^2)}
	{(1+\lambda_j^2)(1+\lambda_k^2)(1+\lambda_j\lambda_m)(1+\lambda_k\lambda_m)}
    \end{align*}
    and this finishes the proof. 
\end{proof}

We remark that all the terms $T''_k$'s vanish when $n=2.$
In fact, this is essentially how we split the terms $T_k$'s.
Corollary~\ref{lem:diag-T} basically deals with the two-dimensional case, while Corollary~\ref{lem:off-diag-T} deals with the additional terms arising in the high dimensional case.

We are in a position to prove Proposition~\ref{prop:Hess-n-dim}.

\begin{proof}
	[Proof of Proposition~\ref{prop:Hess-n-dim}]
	We write the Hessian of $\log \Lambda(S)$ in the direction $(A,A)$ as the sum of $T_1,T_2,T_3,$ and $T_4$ given in \eqref{T-1-T-4}.
	Combining Corollaries~\ref{lem:diag-T} and~\ref{lem:off-diag-T} proves Proposition~\ref{prop:Hess-n-dim}.
\end{proof}

\section{\bf Bernstein theorems for almost isoclinic minimal Lagrangian submanifolds}
\label{sec:Bernstein}

We will use Theorem~\ref{thm:Lambda-elliptic-equation} to prove Bernstein-type theorems for minimal Lagrangian cones and minimal Lagrangian submanifolds.

\subsection{Almost isoclinic minimal cones}

In this section, we will work on minimal Lagrangian cones.
The main ingredient is Theorem~\ref{thm:Lambda-elliptic-equation}. 



\begin{lemma}
	\label{lem:cone-rigidity}
	Let $\mathcal C$ be a regular minimal Lagrangian cone in $\bb R^{2n}.$
	If $\mathcal C$ is almost isoclinic on its smooth part, then $\mathcal C$ is the union of some Lagrangian $n$-planes.
\end{lemma}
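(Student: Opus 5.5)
Let $\mathcal C$ be a regular minimal Lagrangian cone with vertex at the origin, so that $\mathcal C\setminus\{0\}$ is smooth and $\Sigma:=\mathcal C\cap \mathbb S^{2n-1}$ is a smooth closed (possibly disconnected) submanifold. Our plan is to apply Theorem~\ref{thm:Lambda-elliptic-equation} on $\mathcal C\setminus\{0\}$ and exploit the scale invariance of $\Lambda$. Since the Gauss map of a cone is constant along rays, $\Lambda$ is homogeneous of degree zero. Hence $\log\Lambda$, which is smooth and well defined because $\Lambda>0$ by the almost isoclinic assumption, is a function on the link $\Sigma$ only.

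The key computation is the cone Laplacian. For a function $f$ homogeneous of degree zero on the cone,
\[
\Delta_{\mathcal C} f = r^{-2}\Delta_{\Sigma} f ,
\]
where $r=|x|$. Theorem~\ref{thm:Lambda-elliptic-equation} therefore gives, on $\Sigma$ (that is, at $r=1$),
\[
\Delta_\Sigma \log\Lambda \le -c_n |A|^2 \le 0 .
\]
We then integrate over each connected component of the closed manifold $\Sigma$. The divergence theorem gives $\int_\Sigma \Delta_\Sigma\log\Lambda=0$, so $\int_\Sigma |A|^2=0$. Thus the second fundamental form of $\mathcal C$ vanishes along $\Sigma$, and by homogeneity it vanishes on all of $\mathcal C\setminus\{0\}$. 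Alternatively, the maximum principle shows that $\log\Lambda$ is constant on each component, and then the inequality forces $|A|^2\equiv 0$.

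It follows that each connected component of $\mathcal C\setminus\{0\}$ is totally geodesic, hence an open subset of an $n$-plane through the origin, and that plane is Lagrangian. Because each such component is a cone, it is the entire punctured plane, or for $n=1$ a ray; since we work with $n\ge 2$, components of $\Sigma$ are great spheres. Therefore $\mathcal C$ is a union of Lagrangian $n$-planes.

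The only step requiring care is the identity $\Delta_{\mathcal C} f=r^{-2}\Delta_\Sigma f$ for degree-zero homogeneous $f$. It follows from the metric splitting $dr^2+r^2 g_\Sigma$ together with $\partial_r f=0$: the radial terms $\partial_r^2 f+\frac{n-1}{r}\partial_r f$ vanish. We also need that Theorem~\ref{thm:Lambda-elliptic-equation} applies locally on the smooth part. This holds because its proof is pointwise, using the harmonicity of the Gauss map, and so no completeness is needed.
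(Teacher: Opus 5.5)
Your proposal is correct and follows essentially the same argument as the paper. Both use the degree-zero homogeneity of $\Lambda$ and the cone Laplacian to turn Theorem~\ref{thm:Lambda-elliptic-equation} into $\Delta_\Sigma\log\Lambda\le -c_n|A|^2$ on the closed link, then integrate over the link to force $A=0$. Your extra remarks (the $1/r$ scaling of $A$, and why each link component is a full great sphere) only fill in steps the paper leaves implicit.
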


\begin{proof}
	On ${\rm reg}\,\mathcal C,$ the smooth part of $\mathcal C,$ the quantity $\Lambda$ is positive and satisfies an inequality given by Theorem~\ref{thm:Lambda-elliptic-equation}.
	We look at its link $S:=\mathcal C\cap \mathbb S^{2n-1}$ and the quantity $\Lambda_S:=\Lambda|_S.$
	
	First, we note that $\Lambda$ is a scale-invariant quantity.
	That is, given $x\in {\rm reg}\,\mathcal C,$ we have
	\begin{align}\label{Lambda-scale-invariant}
	\Lambda(x) = \Lambda_S\pr{\frac{x}{|x|}}.
	\end{align}
	In particular, since $\Lambda_S>0$ has a positive lower bound on $S,$ we know that there exists $c>0$ such that $\Lambda\ge c$ on ${\rm reg}\,\mathcal C.$
	
	Next, if we let $\D_{\mathcal C}$ and $\D_S$ be the Laplace operators on ${\rm reg}\,\mathcal C$ and $S,$ then they are related by
	\begin{align*}
	\D_{\mathcal C} \log \Lambda
	= \bd_r^2 \log \Lambda
	+ \frac{n-1}r \bd_r \log \Lambda
	+ \frac 1{r^2}\D_S \log \Lambda_S.
	\end{align*}
	By the scale-invariant property~\eqref{Lambda-scale-invariant}, we can use Theorem~\ref{thm:Lambda-elliptic-equation} to derive that on $S,$
	\begin{align*}
	\D_S\log \Lambda_S
	= r^2 \D_{\mathcal C}\log \Lambda
	\le -c_n|A_{\mathcal C}|^2
	\end{align*}
	for some $c_n>0.$
	Integrating the inequality by parts on $S$ then implies $A_{\mathcal C}=0$ on $S.$
	This implies that $S$ is the union of $(n-1)$-dimensional round spheres in $\mathbb S^{2n-1},$ and the conclusion follows.	
\end{proof}

\subsection{Almost isoclinic minimal Lagrangians}

We will use Lemma~\ref{lem:cone-rigidity} to prove Theorem~\ref{thm:Bernstein-AC}.
A key step is to notice that a positive lower bound of $\Lambda$ and the minimality condition actually imply the Lagrangian is graphical after a rotation.

\begin{prop}
\label{prop:graphicality-of-AI-minimal}
    Let $M^n$ be a complete connected minimal Lagrangian in $\bb R^{2n}.$
	If there exists $c>0$ such that $\Lambda\ge c$ on $M,$ then $M$ is graphical up to a rotation.
    To be precise, there exist $\varphi\in\bb R$ and a (vector-valued) smooth function $V\colon \bb R^n\to \bb R^n$ with $|\n V|\le C<\infty$ such that
	\begin{align*}
	e^{i\varphi} M = {\rm Graph}_{\bb R^n} V.
	\end{align*}
\end{prop}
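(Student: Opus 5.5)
The idea is to show that the tangent planes of $M$ all lie in a single graph chart $\mathcal O_\varphi$ with uniformly bounded slope, and then pass from pointwise graphicality to global graphicality by a covering argument. The two inputs are the constancy of the Lagrangian angle (since $M$ is minimal) and the closed-form expression of $\Lambda$ in terms of characteristic angles.

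\emph{Step 1: $\Lambda$ in terms of angles.} Let $p\in M$, and let $\varphi_p$ be an angle for which the conditions of Definition~\ref{def:AI} hold at $T_pM$. Write $\lambda_j=\tan\theta_j$ for the eigenvalues of $Y_{\varphi_p}X_{\varphi_p}^{-1}$, with $|\theta_j|<\pi/2$. Combine Lemma~\ref{lem:detX>0} (using $\det X_{\varphi_p}>0$), Lemma~\ref{lem:Lambda-in-terms-of-lambda} and Lemma~\ref{lem:Lambda-rot-inv}. Since $1+\lambda_j\lambda_k=\cos(\theta_j-\theta_k)/(\cos\theta_j\cos\theta_k)$ and each $\cos\theta_j$ occurs $n-1$ times in the product over pairs, this gives
\[
\Lambda(T_pM)=\prod_{j<k}\cos(\theta_j-\theta_k).
\]
Every factor lies in $(0,1]$. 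Hence $\Lambda\ge c$ forces $\cos(\theta_j-\theta_k)\ge c$, so $|\theta_j-\theta_k|\le \pi/2-\delta$ with $\delta=\delta(c)>0$.

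\emph{Step 2: a global rotation angle.} The absolute angles of $T_pM$ are $\Theta_j:=\varphi_p+\theta_j$ (mod $\pi$). Indeed, $e^{-i\varphi_p}(X+iY)$ has eigen-angles $\theta_j$ up to $SO(n)$, so $\det(X+iY)=e^{i\sum_j\Theta_j}$ up to a positive factor. Because $M$ is a connected minimal Lagrangian, the Lagrangian angle is a constant $\theta_0$, so $\sum_j\Theta_j\equiv\theta_0 \pmod{2\pi}$.

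Define the center $\psi_p:=\frac1n\sum_j\Theta_j$. The $\Theta_j$ form a cluster of width $<\pi/2$, so shifting the whole cluster by $\pi$ is the only ambiguity, and modulo $\pi$ the center is a well-defined continuous function of $p$. It lies in the discrete set $\{\theta_0/n+2\pi k/n\}$ mod $\pi$. By connectedness it is constant; call it $\varphi$.

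Then
\[
|\Theta_j-\varphi|=\frac1n\Bigl|\sum_k(\Theta_j-\Theta_k)\Bigr|\le \frac{n-1}{n}\Bigl(\frac\pi2-\delta\Bigr)<\frac\pi2-\delta'.
\]
Consequently $T_pM\in\mathcal O_\varphi$ for every $p$, and its graph matrix over $e^{i\varphi}\bb R^n$ has eigenvalues $\tan(\Theta_j-\varphi)$, all bounded in absolute value by $C=\tan(\pi/2-\delta')$.

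\emph{Step 3: global graphicality.} Let $\pi$ be the orthogonal projection of $e^{-i\varphi}M$ onto $\bb R^n$. By Step 2, $|d\pi(v)|\ge (1+C^2)^{-1/2}|v|$ for every tangent vector $v$, so $\pi$ is a local diffeomorphism that is uniformly expanding relative to the induced metric. Since $M$ is complete, paths in $\bb R^n$ lift, so $\pi$ is a covering map. As $\bb R^n$ is simply connected and $M$ is connected, $\pi$ is a diffeomorphism. Hence $e^{-i\varphi}M$ is the graph of a smooth $V\colon\bb R^n\to\bb R^n$ with $|\n V|\le C$. The statement's $e^{i\varphi}$ corresponds to replacing $\varphi$ by $-\varphi$.

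I expect the main obstacle to be Step 2. The difficulty is making the center angle rigorously single-valued and continuous, given the mod-$\pi$ ambiguity of the chart angles and the fact that the chart $\varphi_p$ changes from point to point. The argument has to rest on the cluster width $<\pi/2$, which must be obtained first from the lower bound on $\Lambda$ via Step 1.
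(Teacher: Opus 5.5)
Your proposal is correct and follows essentially the same route as the paper's proof. Like the paper, it tacitly uses that $M$ is almost isoclinic, as in Theorem~\ref{thm:Bernstein-AC}. Constancy of the Lagrangian angle forces the centered chart angle $\varphi+\frac1n\sum_k\theta_k$ into a finite set mod~$\pi$. The identity $\Lambda=\prod_{j<k}\cos(\theta_j-\theta_k)\ge c$ gives the uniform bound $|\theta_k-\bar\theta|\le\frac{n-1}{n}\arccos c$. Completeness plus path lifting then makes the projection a covering map, hence a diffeomorphism onto $\bb R^n$.

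The only difference is how the connectedness step is packaged. The paper covers $M$ by the finitely many sets $\mathcal O^M_{(\Theta+2\pi t)/n}$ and argues each is closed. You instead make the center angle a continuous function into a discrete set, which handles the mod-$\pi$ ambiguity of the charts somewhat more cleanly.
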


\begin{proof}
    Let $\Theta$ be the Lagrangian angle of $M,$ which is a constant since $M$ is connected.
    For any $p\in M,$ there exist $\varphi\in\bb R$ and an $n$-plane $L_\varphi=e^{i\varphi}L_B$ so that $T_pM$ is graphical over $L_\varphi$ and satisfies the condition in Definition~\ref{def:AI}.
    Write $T_pM$ as the graph of a symmetric matrix $S_\varphi$ over $L_0,$ and let $\lambda_k$'s be the eigenvalues of $S_\varphi.$
    Thus, if we let $\theta_k:=\arctan \lambda_k$ be the characteristic angles in this chart, it follows that
    \begin{align}\label{SL-ang}
    e^{i\Theta} = e^{i\pr{n\varphi + \sum_k\theta_k}}.
    \end{align}
    Consider
    \begin{align*}
    \psi:=\varphi + \frac 1n \sum_k\theta_k,
    \end{align*}
    and hence \eqref{SL-ang} becomes $e^{i\Theta} = e^{in\psi}.$
    Thus, $n\psi$ is independent of $p\in M$ up to a multiple of $2\pi.$
    Moreover, if we look at the new reference plane $L_\psi=e^{i\psi}L_B,$ according to Lemma~\ref{lem:lambda-rotate}, $T_pM$ is graphical over $L_\psi$ and the eigenvalues of the graph matrix are
    \begin{align*}
    \td \lambda_k 
    = \frac{\lambda_k\cos\ovl\theta - \sin\ovl\theta}{\cos\ovl\theta + \lambda_k\sin\ovl\theta}
    = \tan(\theta_k-\ovl\theta)
    \end{align*}
    where we write $\ovl\theta:=\frac 1n \sum_k\theta_k.$
    Note that each $\theta_k-\ovl\theta$ still lies in $(-\pi/2,\pi/2),$ so the new characteristic angles in this chart are
    $$\td\theta_k:=\arctan\td\lambda_k=\theta_k-\ovl\theta.$$
    Therefore, we find an open cover of $M$ by
    \begin{align*}
    M\sbst \bigcup_{t=0}^{n-1}
    \mathcal O^M_{\frac{\Theta+2\pi t}n}
    \end{align*}
    where we let $\mathcal O^M_\alpha
    := \set{q\in M:T_qM\in \mathcal O_\alpha}.$

    We claim that each piece $\mathcal O^M_{\frac{\Theta+2\pi t}n}$ is a closed subset in $M.$
    Fix a $t\in\set{0,1,\cdots,n-1}$ and write $\beta:=(\Theta+2\pi t)/n.$
    Suppose $p\in \mathcal O^M_\beta$ and let $\td\theta_k$'s be the characteristic angles in this chart.
    These angles satisfy $\sum_k\td\theta=0$ and
    \begin{align*}
    \prod_{j<k}\cos\pr{\td\theta_j-\td\theta_k}
    =\Lambda\ge c.
    \end{align*}
    This implies each $\cos\pr{\td\theta_j-\td\theta_k}$ is bounded by $c$ from below.
    Thus, 
    \begin{align*}
    |\td\theta_j-\td\theta_k|\le \arccos c\in \left[0, \frac\pi 2\right)
    \end{align*}
    for all $j$ and $k.$
    This and the condition $\sum_k\td\theta=0$ imply
    \begin{align}\label{theta-k-bdd}
    |\td\theta_k|\le \frac{n-1}n\arccos c.
    \end{align}
    This implies that if $p_\ell$ is a sequence of points in $\mathcal O^M_{\frac{\Theta+2\pi t}n}$ that converges to some $p_\infty\in M,$ then the characteristic angles of $T_{p_\infty}M$ still satisfy the same estimates \eqref{theta-k-bdd}, so $p_\infty\in O^M_{\frac{\Theta+2\pi t}n}.$
    This proves that each $\mathcal O^M_{\frac{\Theta+2\pi t}n}$ is closed.
    The connectivity of $M$ then implies 
    \begin{align*}
    M = \mathcal O^M_{\frac{\Theta+2\pi t_0}n}
    \end{align*}
    for a common $t_0\in\set{0,1,\cdots,n-1}.$ 
    This and the gradient bound \eqref{theta-k-bdd} allow us to conclude that the orthogonal projection 
    \begin{align*}
    \pi\colon \bb R^{2n}\to L_M:= e^{i\frac{\Theta+2\pi t_0}n} L_B
    \end{align*}
    restricts to a local diffeomorphism on $M.$

    We will finish the proof by showing that $\pi|_M$ is a covering map.
    We claim that every $C^1$ path in $L_M$ admits a lift to $M$ from any prescribed point in the appropriate fiber. 
    Let $\gamma\colon[0,1]\to L_M$ be a $C^1$ path, and let $p\in M$ satisfy $\pi(p)=\gamma(0).$ 
    Since $\pi$ is a local diffeomorphism, there is a unique local lift
    $\td\gamma(t)$ with $\td\gamma(0)=p.$ 
    Let $[0,T)$ be its maximal interval of existence.
    Along the lift, the gradient estimate \eqref{theta-k-bdd} gives
    $$|\td\gamma'(t)|\le \pr{\cos\pr{\frac{n-1}n\arccos c}}^{-1} |\gamma'(t)|.$$
    Note that $\arccos c<\pi/ 2.$
    Consequently,
    \begin{align*}
    |\td\gamma(t)-\td\gamma(0)|
    &\le \pr{\cos\pr{\frac{n-1}n\arccos c}}^{-1} \cdot \int_0^t|\gamma'(s)|ds\\
    &\le \pr{\cos\pr{\frac{n-1}n\arccos c}}^{-1} \cdot \operatorname{Length}(\gamma).
    \end{align*}
    Thus, if $T<1,$ $\td\gamma$ would have finite length and the completeness of $M$ would allow us to extend $\td\gamma$ up to time $t=T.$
    Because $\pi$ is a local diffeomorphism near $\td\gamma(T),$ the lift extends past $T,$ contradicting maximality. 
    Thus, $T=1.$
    It follows that $\pi|_M$ has the unique path-lifting property, and hence it is a covering map.
    Since $L_M\simeq \bb R^n$ is simply connected, the conclusion above implies that $\pi|_M$ is a global diffeomorphism, and hence $M$ is given by the graph of some $V\colon L_M\to\bb R^n.$
    The gradient of~$V$ is bounded by the gradient bound \eqref{theta-k-bdd} again.
\end{proof}

Combining Proposition~\ref{prop:graphicality-of-AI-minimal} with Lemma~\ref{lem:cone-rigidity} implies the main Bernstein theorem.

\begin{proof}
	[Proof of Theorem~\ref{thm:Bernstein-AC}]
	By Proposition~\ref{prop:graphicality-of-AI-minimal}, $M$ is the graph of a function with bounded gradient.
    In particular, $M$ is asymptotically conical with exactly one end so its blowdonw is a minimal Lagrangian cone with $\Lambda\ge c.$
	By Lemma~\ref{lem:cone-rigidity}, the cone is a Lagrangian $n$-plane.
	By the monotonicity formula for minimal submanifolds, this implies that $M$ itself is a Lagrangian $n$-plane.
\end{proof}

A Bernstein-type result of this kind is known to have an equivalent form, which we record in the following (cf. \cite{W16}*{Lecture 3}).

\begin{cor}
\label{cor:local-curv-est}
	Given $c>0,$ there exists $C=C(n,c)<\infty$ such that the following holds.
	If $M^n$ is a minimal Lagrangian submanifold with $\Lambda\ge c,$ then 
	\begin{align*}
	|A_M|\le \frac C{d_M(\cdot,\bd M)}.
	\end{align*} 
\end{cor}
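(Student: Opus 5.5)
We would derive the curvature estimate from Theorem~\ref{thm:Bernstein-AC} by the standard blow-up (point-picking) argument, as in \cite{W16}*{Lecture 3}. Suppose the estimate fails for some $c>0$. Then there are minimal Lagrangians $M_k$ with $\Lambda\ge c$ and points $x_k\in M_k$ with $|A_{M_k}|(x_k)\,d_{M_k}(x_k,\bd M_k)\ge k$. We apply the usual point-selection trick to the function $|A_{M_k}|(x)\,d_{M_k}(x,\bd M_k)$ on the intrinsic ball around $x_k$. This replaces $x_k$ by a point $y_k$ with $r_k:=k/(2|A_{M_k}|(y_k))$ such that $|A_{M_k}|\le 2|A_{M_k}|(y_k)$ on the intrinsic ball $B_{r_k}(y_k)\subset M_k$, and such that $|A_{M_k}|(y_k)\,r_k\to\infty$.

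Next we rescale. We translate $y_k$ to the origin and dilate by $|A_{M_k}|(y_k)$ to obtain minimal Lagrangians $\widetilde M_k$. These satisfy $|A|(0)=1$, $|A|\le 2$ on intrinsic balls of radius $R_k\to\infty$, and $\Lambda\ge c$, since $\Lambda$ depends only on tangent planes and is therefore invariant under translations and dilations. The uniform curvature bound lets us write each $\widetilde M_k$, near every point and on a scale independent of $k$, as a graph over its tangent plane with uniform $C^{1,1}$ bounds. Elliptic regularity for the minimal surface system then upgrades these to $C^{\ell}$ bounds for every $\ell$. By Arzelà--Ascoli and a diagonal argument, a subsequence converges smoothly on compact sets to a complete, connected, immersed minimal Lagrangian $M_\infty$ through the origin with $|A|(0)=1$. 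The Lagrangian condition and minimality are preserved under smooth convergence.

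It remains to check that $M_\infty$ satisfies the hypotheses of Theorem~\ref{thm:Bernstein-AC}. We get $\Lambda\ge c$ on $M_\infty$ by continuity of $\Lambda$ on ${\rm Lag}^+(n)$. The main subtlety is the almost isoclinic condition itself, since $\mathcal{AI}(n)$ is open and a limit of almost isoclinic planes might a priori lie on its boundary. We plan to avoid this using the argument of Proposition~\ref{prop:graphicality-of-AI-minimal}. On each $\widetilde M_k$ (connected and minimal, so with constant Lagrangian angle), $\Lambda\ge c$ gives a chart $e^{i\psi}L_B$ with $\psi=(\Theta_k+2\pi t)/n$ in which the characteristic angles satisfy \eqref{theta-k-bdd}, namely $|\widetilde\theta_j|\le\frac{n-1}{n}\arccos c<\pi/2$. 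After passing to a subsequence, $\Theta_k$ and $t$ converge or stabilize. These closed bounds pass to the limit, so $\det X_\psi>0$ and $\cos(\widetilde\theta_j-\widetilde\theta_k)\ge c>0$ hold on $M_\infty$. Hence $M_\infty$ is almost isoclinic with $\Lambda\ge c$. Theorem~\ref{thm:Bernstein-AC} then says $M_\infty$ is a plane, which contradicts $|A|(0)=1$.
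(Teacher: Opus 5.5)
Your proposal is correct and is the standard blow-up (point-picking) argument that the paper invokes without further detail by citing White's Lecture~3, reducing the estimate to Theorem~\ref{thm:Bernstein-AC}. Your extra check that almost isoclinicity with $\Lambda\ge c$ passes to the limit, via the mean-angle chart of Proposition~\ref{prop:graphicality-of-AI-minimal}, fills in a point the paper leaves implicit. Note that, as in the paper, this tacitly assumes each $M_k$ is itself almost isoclinic, since for $n\ge 4$ the bound $\Lambda\ge c$ alone does not produce such a chart.
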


\subsection{Almost isoclinic surfaces}
\label{sec:surface-Bernstein}

In this section, we prove a stronger Bernstein-type result for two-dimensional minimal Lagrangians that are almost isoclinic.
In place of the new estimate for $\log\Lambda,$ our main tools come from complex geometry and analysis, which allows us to work with a more general class of surfaces beyond almost isoclinic ones.

\begin{thm}
	\label{thm:Bernstein}
	Let $M$ be a complete, properly embedded Lagrangian minimal surface in $\bb R^{4}.$
	If $\Lambda\ge c$ for some $c>-1$, then $M$ is the union of Lagrangian $2$-planes.
\end{thm}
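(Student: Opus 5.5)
The plan is to use the $n=2$ splitting $\operatorname{Lag}^+(2)\simeq \mathbb S^1\times\mathbb S^2$ from Section~\ref{sec:2D-AI} together with holomorphicity. For a minimal Lagrangian surface $M\subset\mathbb R^4$, the Lagrangian angle is constant, since $M$ is special Lagrangian on each component. So the self-dual component of the Gauss map is constant on each connected component. By Ruh--Vilms, the Gauss map is harmonic, and its anti-self-dual component $\gamma_-\colon M\to\mathbb S^2$ is then a harmonic map into $\mathbb S^2$. Since the self-dual part is constant, $\gamma_-$ is (anti)holomorphic with respect to the conformal structure of $M$. Equivalently, after a hyperkähler rotation $M$ is a holomorphic curve in $\mathbb C^2$ for a suitable complex structure $J'$, and $\gamma_-$ is its Gauss map into $\mathbb{CP}^1$. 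By \eqref{Lambda_n=2}, the function $\Lambda$ is the height function $\langle \gamma_-, e\rangle$ on $\mathbb S^2$ in the direction $e$ dual to $\operatorname{Re}(dz_1\wedge d\bar z_2)$. The hypothesis $\Lambda\ge c>-1$ therefore says that $\gamma_-$ omits a closed spherical cap around $-e$, which is a neighborhood of a point in $\mathbb{CP}^1$.

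Work on a connected component of $M$, which is complete, properly embedded, and a $J'$-holomorphic curve in $\mathbb C^2$. Its conformal type is either parabolic ($\mathbb C$, $\mathbb C^*$, etc.) or hyperbolic. The key step is to show that $\gamma_-$ is constant. The Gauss map of a complete minimal surface in $\mathbb R^4$ that is holomorphic in this sense and omits an open set of $\mathbb{CP}^1$ should be constant. This is the $\mathbb R^4$ analogue of Xavier/Fujimoto-type omission theorems for the Gauss map. In the holomorphic curve picture it is a Chern--Osserman type statement: a complete holomorphic curve whose Gauss map omits a neighborhood of a point is a plane.

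The concrete route I will try goes as follows. Post-compose $\gamma_-$ with a Möbius map sending the omitted cap to the exterior of the unit disk. This gives a bounded holomorphic function $g$ on $M$ with $|g|\le r<1$. Write the metric as $ds^2=|\omega|^2(1+|g|^2)$ for a holomorphic $1$-form $\omega$ (Weierstrass-type data of the $J'$-holomorphic curve, graphical over the direction corresponding to $g=0$). Since $|g|\le r$, the metric is comparable to the flat metric $|\omega|^2$, and the projection onto the corresponding complex line has bounded slope. Proper embeddedness plus completeness then makes this projection a proper local biholomorphism onto $\mathbb C$, by the same path-lifting argument as in Proposition~\ref{prop:graphicality-of-AI-minimal}. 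So each component is an entire $J'$-holomorphic graph $w=f(z)$ with $|f'|$ bounded. By Liouville, $f'$ is constant, and the component is a plane. It is also Lagrangian for the original structure, since Lagrangianity is preserved throughout.

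The main obstacle is justifying that the bound $\Lambda\ge c>-1$ really gives a uniform gradient bound over a single fixed line. This requires identifying precisely which complex line in the $J'$ picture corresponds to the pole $e$ of the anti-self-dual sphere. It also requires checking that the relevant cap complement is a disk bounded away from the antipode, which is where $c>-1$, rather than $c>0$, enters. The covering argument then needs properness in place of a prior graph assumption, which is why embeddedness and properness are assumed. Finally, different components give distinct planes, so $M$ is a union of Lagrangian $2$-planes.
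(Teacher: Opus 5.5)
Your proposal is correct and is essentially the paper's argument: after a hyperkähler rotation $M$ becomes a holomorphic curve in the coordinates $w_1=x_1+ix_2$, $w_2=y_1-iy_2$, and $\Lambda\ge c>-1$ becomes the uniform slope bound $|dw_2/dw_1|^2\le \frac{1-c}{1+c}$ over the $w_1$-line. The paper reads this bound off from a global defining function $h$, while you read it off from the height function of $\gamma_-$, which spares you the triviality-of-line-bundles step; in both cases path lifting plus Liouville finishes. The Xavier/Fujimoto digression and the M\"obius normalization to $|g|<1$ are unnecessary: any bounded slope suffices, and only a unitary change of frame keeps the formula $ds^2=|\omega|^2(1+|g|^2)$ valid for $M$ itself.
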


\begin{remark}
\label{rmk:sharpness}
    The same argument in the proof of Theorem~\ref{thm:Bernstein} below implies the same rigidity for a minimal Lagrangian with $\Lambda\le c<1.$
    Explicit calculations as in Section~\ref{sec:Lawlor-neck} imply that on a Lawlor neck in $\bb C^2$, the supremum and infimum of $\Lambda$ are $1$ and~$-1.$
    Thus, the bound in Theorem~\ref{thm:Bernstein} is sharp.
    In terms of the language of Gauss maps (see Figure~\ref{fig:Gauss}), the theorem says that if the projection to the anti-self-dual $\mathbb S^2$ avoids a neighborhood of the north or south pole, then the minimal Lagrangian is flat.
    From the perspective of graphs of symplectomorphisms (or area-preserving maps) as discussed in Section~\ref{sec:2D-AI}, Theorem~\ref{thm:Bernstein} should be compared with the results in \cite{Ni}.
\end{remark}

In dimension two, the estimate in Theorem~\ref{thm:Lambda-elliptic-equation} is stronger and can be used to obtain a PDE proof for Theorem~\ref{thm:Bernstein} for a properly immersed minimal Lagrangian with a positive lower bound on~$\Lambda.$

\begin{proof}
[Proof of Theorem~\ref{thm:Bernstein}]
    Recall that $M$ is the image of a proper embedding into $\mathbb{R}^4$.
    We may assume that $M$ is connected and special Lagrangian with respect to the complex coordinates $z_j=x_j+iy_j, j=1, 2$.
    Consider the new coordinates $w_1=x_1+ix_2$ and $w_2=y_1-iy_2,$
    which induce a new complex structure.
    Then $M$ is a smooth holomorphic curve with respect to this new complex structure.
    Since $M$ is properly embedded, in particular, $M$ is topologically closed. 
    Since every line bundle on $\bb C^2$ is holomorphically  trivial, such a closed holomorphic curve is the zero set of a holomorphic function $h\colon \bb C^2\to\bb C.$
    
    Let
    $\omega = \frac i2 \pr{dw_1\wedge d\ovl w_1 + dw_2\wedge d\ovl w_2}$
    be the new K\"ahler form and let 
    \begin{align*}
    \beta
    :& = \frac i2 \pr{dw_1\wedge d\ovl w_1 - dw_2\wedge d\ovl w_2}
    = dx_1\wedge dx_2 + dy_1\wedge dy_2,
    \end{align*}
    so $\Lambda = *\beta|_L.$
    At a point on $L,$ the tangent space is spanned by $\xi=(\bd_{w_1}h, \bd_{w_2}h),$ so
    \begin{align*}
    \Lambda
    = *\beta|_M
    = \frac{\beta\pr{\xi, J\xi}}{\omega(\xi,J\xi)}
    = \frac{|\bd_{w_2}h|^2-|\bd_{w_1}h|^2} {|\bd_{w_2}h|^2+|\bd_{w_1}h|^2}.
    \end{align*}
    Thus, the bound $\Lambda\ge c$ implies
    \begin{align}\label{L-w-2-dom}
    |\bd_{w_2}h|^2 \ge \frac{1+c}{1-c}|\bd_{w_1}h|^2
    \end{align} 
    on $M.$ 
    Since $c>-1,$ this and the smoothness of $M$ implies $|\bd_{w_2}h|>0.$
    In particular, the implicit function theorem implies that the projection 
    $$\pi\colon (w_1,w_2)\in\bb C^2\mapsto w_1\in\bb C$$ 
    restricted to $M$ is a local diffeomorphism.

    Next, we observe that the properness of the embedding $M\to\bb C^2$ and the bound \eqref{L-w-2-dom} imply that~$\pi|_M$ is a covering map.
    It is similar to the second part of the proof of Proposition~\ref{prop:graphicality-of-AI-minimal}.
    We claim that every $C^1$ path in $\bb C$ admits a lift to $M$ from any prescribed point in the appropriate fiber. 
    Let $\gamma\colon[0,1]\to\bb C$ be a $C^1$ path, and let $p\in M$ satisfy $\pi(p)=\gamma(0).$ 
    Since $\pi$ is a local diffeomorphism, there is a unique local lift
    $$\td\gamma(t)=\pr{\gamma(t),\eta(t)}$$
    with $\td\gamma(0)=p.$ 
    Let $[0,T)$ be its maximal interval of existence.
    Along the lift, the estimate \eqref{L-w-2-dom} gives
    $|\eta'(t)|\le \frac{1-c}{1+c} |\gamma'(t)|.$
    Consequently,
    $$|\eta(t)-\eta(0)|
    \le \frac{1-c}{1+c} \cdot \int_0^t|\gamma'(s)|ds
    \le \frac{1-c}{1+c} \cdot \operatorname{Length}(\gamma).$$
    Thus, if $T<1,$ both coordinates of $\td\gamma(t)$ would remain in a fixed compact subset of $\mathbb C$ as $t\nwarrow T.$ 
    Hence, the image of $\td\gamma$ is contained in a compact subset $K\subset\bb C^2$.
    Since the embedding is proper, $M\cap K$
    is compact, so for any sequence $t_k\nearrow T$, a subsequence of $\td\gamma(t_k)$ converges to some point $q\in M.$ 
    By continuity, $\pi(q)=\gamma(T).$
    Because $\pi$ is a local diffeomorphism near $q,$ the lift extends past $T,$ contradicting maximality. 
    Thus, $T=1.$
    It follows that $\pi|_M$ has the unique path-lifting property, and hence it is a covering map.
    
    Since $\bb C$ is simply connected, the conclusion above implies that $\pi|_M$ is a global diffeomorphism, and hence $M$ is given by the graph of a holomorphic function $f\colon \bb C\to\bb C$ with $h\pr{w,f(w)}=0.$
    Thus, by \eqref{L-w-2-dom},
    \begin{align*}
    |\bd_w f| 
    = \abs{\frac{\bd_{w_1}h}{\bd_{w_2}h}}
    \le \frac{1-c}{1+c},
    \end{align*}
    so $f$ is a linear polynomial by the Liouville theorem.
    This implies that $L$ is a two-dimensional plane.
\end{proof}

\end{document}